\let\mathcal\mathscr
\let\emptyset\varnothing
\newtheorem{theorem}{Theorem}
\newtheorem{lemma}[theorem]{Lemma}
\newtheorem{proposition}[theorem]{Proposition}
\newtheorem{corollary}[theorem]{Corollary}
\theoremstyle{definition}
\newtheorem{definition}[theorem]{Definition}
\newtheorem{remark}[theorem]{Remark}
\newtheorem*{notation}{Notation}
\numberwithin{theorem}{section}
\numberwithin{equation}{section}
\DeclareSymbolFont{bbold}{U}{bbold}{m}{n}
\DeclareSymbolFontAlphabet{\mathbbold}{bbold}
\renewcommand{\mod}[1]{ \ \left(\textnormal{mod}\ #1\right)}
\newcommand{\md}[1]{  \left(\textnormal{mod}\ #1\right)}
\renewcommand{\P}{\mathbb{P}}
\newcommand{\Q}{\mathbb{Q}}
\newcommand{\N}{\mathbb{N}}
\newcommand{\A}{\mathbb{A}}
\newcommand{\R}{\mathbb{R}}
\newcommand{\Z}{\mathbb{Z}}
\newcommand{\C}{\mathbb{C}}
\renewcommand{\l}{\left}
\renewcommand{\r}{\right}
\renewcommand{\b}{\mathbf}
\renewcommand{\c}{\mathcal} 
\renewcommand{\gcd}{\textrm{gcd}} 
\renewcommand{\leq}{\leqslant}
\renewcommand{\geq}{\geqslant}
\renewcommand{\#}{\sharp}
\renewcommand{\gg}{\ggg}
\renewcommand{\ll}{\lll}
\newcommand{\p}{\mathfrak{p}}
\newcommand{\1}[2]{{\mathbf{1}}_{\upepsilon_{#1}}(#2)}
\newcommand{\wbk}{\widetilde{\mathbf{k}}}
\newcommand{\bfa}{\mathbf{a}}
\newcommand{\bfd}{\mathbf{d}}
\newcommand{\bfe}{\mathbf{e}}
\newcommand{\bff}{\mathbf{f}}
\newcommand{\bfg}{\mathbf{g}}
\newcommand{\bfh}{\mathbf{h}}
\newcommand{\bfj}{\mathbf{j}}
\newcommand{\bfl}{\mathbf{l}}
\newcommand{\bfx}{\mathbf{x}}
\newcommand{\bfy}{\mathbf{y}}
\newcommand{\bfz}{\mathbf{z}}
\newcommand{\bfk}{\mathbf{k}}
\newcommand{\bfs}{\mathbf{s}}
\newcommand{\bfu}{\mathbf{u}}
\newcommand{\bfw}{\mathbf{w}}
\newcommand{\bfP}{\mathbf{P}}
\newcommand{\calE}{\mathcal{E}}
\newcommand{\calI}{\mathcal{I}}
\newcommand{\calP}{\mathcal{P}}
\newcommand{\calY}{\mathcal{Y}}
\newcommand{\grm}{\mathfrak{m}}
\newcommand{\grM}{\mathfrak{M}}
\newcommand{\grS}{\mathfrak{S}}
\newcommand{\alp}{{\alpha}}
\newcommand{\bet}{{\beta}}
\newcommand{\tet}{{\theta}}
\newcommand{\del}{{\delta}}
\newcommand{\lam}{{\lambda}}
\newcommand{\eps}{{\epsilon}}
\newcommand{\vareps}{{\varepsilon}}
\newcommand{\sig}{{\sigma}}
\newcommand{\ome}{{\omega}}
\newcommand{\gam}{{\gamma}}
\newcommand{\Del}{{\Delta}}
\newcommand{\Sig}{{\Sigma}}
\newcommand{\Lam}{{\Lambda}}
\newcommand{\bfgam}{{\boldsymbol \gam}}
\newcommand{\bfalp}{{\boldsymbol \alp}}
\newcommand{\bfbet}{{\boldsymbol \bet}}
\def\d{{\,{\rm d}}}
\newcommand{\rank}{\textrm{rank}} 
\newcommand{\supp}{\textrm{supp}}
\newcommand{\meas}{\textrm{meas}}
\DeclareMathOperator*{\Osum}{\sum{}^*}
\DeclareMathOperator*{\Oprod}{\prod{}^\flat}
\DeclareMathOperator*{\OOprod}{\prod{}^{\flat\flat}}
\begin{document}
\begin{abstract}
We study almost prime solutions
of systems of Diophantine equations in 
the Birch setting.
Previous work shows that there exist integer solutions of size $B$
with each component having 
no prime divisors below $B^{1/u}$, where  
$u=c_0n^{3/2},$
$n$ is the number of variables and $c_0$ is a constant depending on the degree and the number of equations.
We improve the polynomial
growth 
$n^{3/2}$
to the logarithmic
$(\log n) (\log \log n)^{-1}$. 
Our main new ingredients are
the generalisation of
the Br\"{u}dern--Fouvry
vector sieve
in any dimension
and the 
incorporation of smooth weights 
into the Davenport--Birch version of the circle method.
\end{abstract}

\date{\today}
\title[
Sarnak's saturation problem for complete intersections
]
{
Sarnak's saturation problem for complete intersections
}

\author{D. Schindler}
\address{
Universiteit Utrecht\\
Mathematisch Instituut\\
Budapestlaan 6\\
Utrecht\\
3584 CD \\
Netherlands
}
\email{d.schindler@uu.nl}

\author{E. Sofos}
\address{
Max Planck Institute for Mathematics\\
Vivatsgasse 7\\
Bonn\\ 
53111\\
Germany 
}
\email{sofos@mpim-bonn.mpg.de}

%\subjclass[2010]{	 11D72,  	%  Equations in many variables   	
%                                 (11N36, 11P55)  % Applications of sieve methods, %Applications of the Hardy-Littlewood method	 
%                           }

\maketitle

\setcounter{tocdepth}{1}
\tableofcontents

\section{Introduction}
\label{intro}

Let $f_1,\ldots, f_R\in \Z[x_1,\ldots, x_n]$ be forms
of degree $d$ and write $\bff=(f_1,\ldots, f_R)$. 
We consider the affine
variety defined by
\begin{equation}\label{eqn0}
V_\bff
: f_i(x_1,\ldots, x_n)=0,\quad 1\leq i\leq R.
\end{equation} 
We are interested in 
Sarnak's saturation problem,
that is to find 
a
Zariski-dense set 
of 
integer zeros $(x_1,\ldots,x_n) \in V_\bff(\Z)$
where each $x_i$ is either a prime or has a small number of prime divisors.
Recent work of Cook and Magyar \cite{CM} is concerned with finding prime solutions to the Diophantine system
$\bff(\bfx)=\bfs$ for $\bfs\in \Z^R$, i.e. solutions for which every variable $x_i$ is a prime number. 
They succeed in establishing a local to 
global principle, including an asymptotic formula, via
the circle method if the Birch rank $\mathfrak{B}(\b{f})$,
that will be defined at the beginning of \S\ref{s:genebi}),
satisfies $\mathfrak{B}(\b{f})\geq \chi(R,d)$
for some 
function 
$\chi(R,d)$
which only depends on the degree $d$ and the number of polynomials $R$. 
However, the value of $\chi(R,d)$, as it would result from the current proof in \cite{CM}, is expected to be tower exponential
in $d$ and $R$. 
For systems of quadratic forms one has $$\chi(R,2)\leq 2^{2^{CR^2}}.$$ 
For more general systems we do not have any explicit upper bounds on this function.
\par
It is therefore
natural to ask whether one can find an explicit condition
which ensures the existence of
a Zariski dense set of integer solutions
with all coordinates being almost prime;
this is usually referred to as Sarnak's saturation problem.
Let $\Omega(m)$ denote the number of prime factors of $m$ counted with multiplicity.
Almost primes have zero density in the integers
owing to the generalised prime number theorem:
for each fixed integer
$k \geq 1$ 
we have 
\[
\frac{1}{x}
\#\Big\{m \in \mathbb{N} \cap [1,x]: \Omega(m)\leq k
\Big\}
\sim
\frac{(\log \log x)^{k-1}}{(k-1)!
\log x},
\text{ as } x \to\infty
.\]
The fact that
one seeks solutions in 
thin subsets of integers
places
problems of this type in a higher level of difficulty than studying the number of all integer solutions 
in expanding regions. 
Yamagishi~\cite{arXiv:1709.03605} 
showed the existence of infinitely many 
integer solutions in the case $R=1$ and for large $n$,
with
every solution 
having exactly $2$ prime factors.
This corresponds to taking $k=2$ in the asymptotic above.

In this paper we are interested  in a harder question than that of almost primes, 
namely in finding solutions within sets that have asymptotically zero density 
compared to the set of almost primes. 
Let $P^-(m)$ denote the least prime divisor of a positive integer $m\neq 1$ 
and define $P^-(1):=1$.
Integers $m$ with $P^-(m)\geq m^{1/u}$ for some $u>1$
are almost primes,
however their density is \textit{arbitrarily smaller} in comparison. 
Indeed, by Buchstab's theorem~\cite{buch}  
one has the following 
for all fixed 
$k\in \N_{\geq 2}$ and $u \in \R_{>1}$,
\[
\frac
{\#\big\{m \in \mathbb{N} \cap [1,x]: P^-(m)\geq x^{1/u}\big\}}
{\#\big\{m \in \mathbb{N} \cap [1,x]: \Omega(m)\leq k\big\}}
\sim
\frac
{(k-1)!u w(u)}
{(\log \log x)^{k-1}}
\ll_{k,u} \frac{1}
{(\log \log x)^{k-1}},
\text{ as } x \to\infty
,\]
where $w(u)$ is the Buchstab function.
Progress on the saturation problem within this thinner set of solutions
was recently made by Magyar and Titichetrakun \cite{magtit}. 
They managed to treat systems of equations where the number of variables is the same as in Birch's work \cite{Bir62}, i.e. assuming that the Birch rank exceeds 
$R(R+1)(d-1)2^{d-1}$. 
They proved
lower bounds of the correct order of magnitude
regarding the number of integer solutions 
with each coordinate $x_i$ satisfying
$P^-(|x_i|)\geq |x_i|^{1/u}$, where $u$ is any constant in the range
\begin{equation}
\label{eq:bmagt}
u\geq 
2^8n^{3/2}d(d+1)R^2(R+1)(R+2)
.\end{equation}
The ultimate goal of showing that 
all variables $x_i$ can simultaneously be 
prime 
corresponds to the value
$u>2-\epsilon$ for some $\epsilon>0$,
hence any result decreasing the admissible value for $u$ in~\eqref{eq:bmagt}
is an equivalent reformulation of progress towards this goal.
Our aim in this paper 
is
to decrease the admissible value for $u$
when the degree and the number of equations is
fixed
so as to have 
at most logarithmic growth in terms of $n$
rather than 
polynomial.
\subsection{Summary of our results}
\label{s:summ}
In order to prove quantitative or qualitative
results for the system of equations (\ref{eqn0}) one typically needs $n$ to be 
sufficiently large in terms of $d$ and $R$ and the singular locus of $V_{\bff}$. Thus, for example, the Hasse principle is known
for non-singular cubic hypersurfaces when $n\geq 9$
(Hooley~\cite{MR936992}),
for non-singular quartics when $n\geq 40$ (Hanselmann~\cite{hanse})
and for non-singular quintics in at least $n\geq 101$ variables (Browning and Prendiville~\cite{brpre}).
One may expect that the dependence of $\eqref{eq:bmagt}$ 
on $n$ should decrease when $n$ increases;
we are not able to provide a bound that is independent of $n$ 
but we shall provide a bound that depends logarithmically on $n$
rather than polynomially.
For this 
we shall use the vector sieve of Br\"{u}dern
and Fouvry to show that 
for fixed $d,R$ one can improve~\eqref{eq:bmagt}
to 
\[
u\gg \frac{\log n}{\log \log n}
.\]
This constitutes a major improvement
over~\eqref{eq:bmagt}
and it applies to almost all situations in the Birch setting,
see Theorem~\ref{thm:mainvector}.
This is the main result in this paper.

As an additional result we shall provide an improvement in all situations in the Birch setting,
however,
this will not be of logarithmic nature.
Namely, using the Rosser--Iwaniec sieve 
we shall prove that one can take 
$u\gg n$ 
in all of the remaining cases, see Theorem~\ref{thm:rosiwa}, 
while, in some situations covered by Theorem~\ref{thm:rosiwa}
but not by Theorem~\ref{thm:mainvector}
we shall show via the weighted sieve 
that there are many integer zeros 
$(x_1,\ldots,x_n)$
where
the total number of prime factors of 
$|x_1\cdots x_n|$
is
$\ll n \log n$, while at the same time every prime factor of each $|x_i|$ is at least $|x_i|^\alpha$
for some $0<\alpha<1$ independent of $\b{x}$, see
Theorem~\ref{thm:weighted}.

\subsection{The vector sieve in arbitrary dimension}
\label{s:vectt}
The \textit{vector sieve}
was brought into light
by Br{\"u}dern
and Fouvry~\cite{BF}
to show that 
for all sufficiently large 
positive integers $N$ satisfying
$N\equiv 4 \mod{24}$
the Lagrange equation
\[
N=x_1^2+x_2^2+x_3^2+x_4^2
\]
has many solutions $\b{x} \in \N^4$ with each $x_i$ 
being indivisible by any prime
of size at most $N^{1/u}$ with 
$u\geq 68.86$.
Problems of type Waring--Goldbach become less hard
the more variables are available
and the expectation is that
one can take each $x_i$ to be a prime for  $N$ as above-
this is still open while the case of representations by $5$ squares of primes
was settled by Hua~\cite{MR3363459}.
The vector sieve was later used to make improvements 
on the admissible value for $u$
in Lagrange's equation
by
Heath-Brown and Tolev~\cite{MR1979185},
Tolev~\cite{MR2075638}
and
Cai~\cite{MR2755472},
as well as in other sieving
problems
(\cite{MR1401427},~\cite{MR2143730},~\cite{MR3531240}).  

The main idea of the vector sieve is to use a combinatorial 
inequality that replaces the usual lower bound sieve 
by a linear combination of products of sieving functions each of dimension $1$,
one of the advantages being an improvement over the admissible value for $u$.
There are other applications of the vector sieve in the literature
but to our knowledge it has not been applied for sieves of 
arbitrarily large sieve dimension (the reader is referred to the book of 
Friedlander and Iwaniec~\cite{MR2647984}
for the terminology). 

Let us now proceed to the statement of our main theorem.
Denoting the $p$-adic units 
by $\Z_p^\times$ 
we will always make the assumption that  
\begin{equation}
\label{cond:units}
\b{f}=\b{0} \text{ has non-singular solutions in } (0,1)^n \text{ and in } (\Z_p^\times)^n
\text{ for every prime } p.
\end{equation}
We shall define the quantity $K=K(\bff)$ in~\eqref{def:birchrank}
using the notion of
the
Birch rank
$\mathfrak{B}(\b{f})$.
Let  
\begin{equation}
\label{def:theY} 
\Upsilon:=
\frac{d\mathfrak{B}(\bff)}{(d-1)2^{d-1}}
\Big(d-\frac{1}{R}\Big)
+R 
,\end{equation}
as well as 
\begin{equation}
\label{def:the} 
\theta':=
\min
\Big\{
\frac{1}{\rho},
\frac{\epsilon_{1,1}-dR}{\epsilon_{1,2}+\epsilon_{1,3}},
\frac{\epsilon_{2,1}-dR}{\epsilon_{2,2}+\epsilon_{2,3}},
\frac{\epsilon_{3,1}-dR}{\epsilon_{3,2}+\epsilon_{3,3}}
\Big\}
,\end{equation}
where
\begin{equation}
\label{def:rrr}
\rho:=4R(R+1)d\l(1+\frac{d}{2R(d-1)+1}+\frac{3Rd}{3R(d-1)+1}\r)
\end{equation}
and  the vectors $(\epsilon_{i,1},\epsilon_{i,2},\epsilon_{i,3})$
are defined
as
the 
columns
of the following
matrix,
\begin{equation}
\label{def:epss}
\boldsymbol{\epsilon}:=
\begin{bmatrix} 
{Rd+1/2} & K& \frac{(K-R(R+1)(d-1))}{4R(R+1)d}+Rd
 \\
Rd+1/2 & K & \frac{d(K-R^2(d-1))}{2R(d-1)}+R+K+\frac{2dK}{d-1}-Rd \\  
0  & 0 & \max\big\{0,\frac{K-R(R+1)(d-1)}{4R(R+1)d}-R-K+Rd\big\}
\end{bmatrix}
 .\end{equation} 
\newline
Here follows the main result of our paper.
\begin{theorem}
\label{thm:mainvector}
There exists a positive absolute constant $c_0$ such that whenever 
the forms $f_1,\ldots,f_R\in \Z[x_1,\ldots,x_n]$ of degree $d\geq 2$
satisfy~\eqref{cond:units}
and
\[
\mathfrak{B}(\b{f})
>
\max\big\{
2^{d-1}(d-1)
R(R+1),
2^{d-1}
(d-1)R^2
+(R+1)
(\Upsilon+1),
2^{d-1}
(d^2-1)R^2
\big\}
,\]
then we have for all large enough $B\geq 1$, 
\[
\#
\Big\{
\b{x} \in ((0,B]\cap\N)^n:\b{f}(\bfx)=\b{0},
P^-(x_1\cdots x_n)>
B^{
\theta'
\frac{\log \log n}
{c_0\log n}
}
\Big\}
\gg
\frac{B^{n-Rd}}{(\log B)^n}
,\]
where the constant $\theta'$ satisfies $\theta'\gg_{d,R} 1$.
\end{theorem}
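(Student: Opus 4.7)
The plan is to combine a generalisation of the Br\"{u}dern-Fouvry vector sieve to arbitrary dimension with a smooth-weighted version of the Davenport-Birch circle method. Fix a non-negative smooth weight $w:\R\to\R_{\geq 0}$ with $\supp(w)\subset(0,1)$, positive on a compact subinterval. Setting $z=B^{\theta'(\log\log n)/(c_0\log n)}$ and $P(z)=\prod_{p\leq z}p$, the object of study is
\begin{equation*}
\mathcal{N}(B):=\sum_{\substack{\b{x}\in\Z^n\\ \b{f}(\b{x})=\b{0}}}\prod_{i=1}^n w(x_i/B)\,\mathbf{1}[\gcd(x_i,P(z))=1],
\end{equation*}
which lower-bounds, up to a constant, the cardinality appearing in the theorem. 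The first task is to establish, uniformly in squarefree $\b{d}\in\N^n$ with every $d_i$ dividing $P(z)$ and $d_1\cdots d_n\leq B^{\theta'}$, a smooth circle-method asymptotic
\begin{equation*}
\sum_{\substack{\b{x}\in\Z^n,\ \b{f}(\b{x})=\b{0}\\ d_i\mid x_i\ \forall i}}\prod_{i=1}^n w(x_i/B)=\mathfrak{S}(\b{d})\,\mathfrak{J}(w)\,\frac{B^{n-Rd}}{d_1\cdots d_n}+O\bigl(E(\b{d},B)\bigr),
\end{equation*}
via the substitution $x_i=d_iy_i$ and Birch's treatment of the rescaled system $f_i(d_1y_1,\ldots,d_ny_n)=0$. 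The three ratios entering \eqref{def:the} are precisely the level-of-distribution thresholds forced respectively by the minor arc Weyl bound (whose exponent is $\rho$ of \eqref{def:rrr}), by the convergence of the singular series, and by the accuracy of the singular integral approximation; the matrix $\boldsymbol{\epsilon}$ of \eqref{def:epss} records the Birch exponents produced when the differencing process is carried through against congruence classes modulo~$\b{d}$.

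The sieve-theoretic input is the elementary inequality
\begin{equation*}
\prod_{i=1}^n\chi_i\;\geq\;\sum_{j=1}^n\chi_j^-\prod_{i\neq j}\chi_i^+-(n-1)\prod_{i=1}^n\chi_i^+,\qquad \chi_i^-\leq\chi_i\leq\chi_i^+,
\end{equation*}
which generalises the four-dimensional identity of Br\"{u}dern-Fouvry to arbitrary~$n$. I apply it with $\chi_i=\mathbf{1}[\gcd(x_i,P(z))=1]$ and $\chi_i^\pm(x_i)=\sum_{d\mid\gcd(x_i,P(z))}\lambda_i^\pm(d)$, where $(\lambda_i^\pm)$ are linear Rosser-Iwaniec sieve weights of level $D=B^{\theta'}$. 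The decisive point is that each $\chi_i^\pm$ is a \emph{one-dimensional} sieve; inserting this into $\mathcal{N}(B)$ and using the uniform asymptotic above, the main contribution collapses to
\begin{equation*}
\mathfrak{S}\,\mathfrak{J}(w)\,B^{n-Rd}\,V(z)^n\bigl(nf(u)F(u)^{n-1}-(n-1)F(u)^n\bigr)\bigl(1+o(1)\bigr),
\end{equation*}
where $V(z)=\prod_{p\leq z}(1-1/p)$, $u=\log B/\log z$, and $F,f$ are the one-dimensional Rosser upper and lower sieve functions.

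Positivity of this lower bound reduces to the inequality $f(u)/F(u)>1-1/n$. Since the linear sieve enjoys a factorial-type bound $F(u)-f(u)\ll e^{-u\log u+O(u)}$, this requirement is equivalent to $u\log u\gg\log n$, i.e.\ $u\gg(\log n)/\log\log n$, which is precisely the exponent in the theorem; the hypothesis on $\mathfrak{B}(\b{f})$ then ensures that each of the three ratios in \eqref{def:the} is bounded below by a positive constant depending only on $d,R$, so that $\theta'\gg_{d,R}1$. The principal obstacle is the very first step: establishing the circle-method asymptotic uniformly in $\b{d}$ with an error of shape $B^{n-Rd-\delta}/(d_1\cdots d_n)$ that survives summation over $\b{d}$ up to the level $B^{\theta'}$. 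This demands a careful tracking of the $\b{d}$-dependence in Birch's Weyl differencing for the rescaled forms, a quantitative description of $\mathfrak{S}(\b{d})$ as a multiplicative function (behaving like $\prod_i(1+O(1/p))$ at primes of good reduction), and the smoothness of $w$ to avoid the boundary losses that would otherwise be fatal in this thin-sieve regime. Once this uniform asymptotic is in hand, the vector-sieve inequality together with the decay of $F-f$ delivers the asserted lower bound $\gg B^{n-Rd}/(\log B)^n$.
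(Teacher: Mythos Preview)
Your outline captures the essential architecture correctly: the vector sieve inequality in arbitrary dimension, a smooth circle-method asymptotic with congruence conditions, and the reduction to an inequality of the shape $f(s)/F(s)>1-1/n$ which is satisfied once $s\gg(\log n)/\log\log n$. Where your proposal diverges from the paper, and where it contains a genuine gap, is in the treatment of the local densities $\mathfrak{S}(\b{d})$.

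You write that one needs ``a quantitative description of $\mathfrak{S}(\b{d})$ as a multiplicative function (behaving like $\prod_i(1+O(1/p))$ at primes of good reduction)''. For a \emph{homogeneous} system this is false in the regime that matters. Writing $\varpi(\b{k})$ for the normalised density attached to the conditions $k_i\mid x_i$, one has $\varpi(p,\ldots,p)=p^{Rd}$, so there is no bound of the form $\varpi(\b{k})\leq\prod_i\widetilde{\varpi}(k_i)$ with $\widetilde{\varpi}(p)=O(1)$; the paper flags this explicitly (\S\ref{s:bvarpi}). Your sieve sum therefore does \emph{not} collapse into a product of one-variable sums, and the main term you display, with the factor $V(z)^n$, is not what emerges. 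The paper resolves this by a two-stage process you have omitted: first a $W$-trick and a pre-sieve (Proposition~\ref{prop:presiev}) up to an intermediate level $z_1=(\log B)^{A}$, and only then the vector sieve over primes in $(z_1,z]$. The point is that when all prime factors of the $l_i$ exceed $z_1$, the contribution of vectors with $\gcd(l_i,l_j)>1$ can be shown to be $O(z_1^{-1+\eps}(\log z)^n)$ via the local-density estimates of \S\ref{s:localden} (Corollaries~\ref{cor2} and~\ref{corome}); this is Lemma~\ref{lem3} and the surrounding discussion. Only after discarding these terms does $\varpi(\b{l})$ factor as $\prod_i\varpi_i(l_i)$ and the one-dimensional linear-sieve analysis become available.

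A secondary point: the level of distribution in the paper is not simply $d_1\cdots d_n\leq B^{\theta'}$. The circle-method asymptotic (Lemma~\ref{lem:levofdi}) requires $|\b{k}|\leq B^{1/\rho}$ and produces an error $B^{n+\eps}\widetilde{\b{k}}^{-1}E(B;\b{k})$ with $E$ built from the matrix $\boldsymbol{\epsilon}$; the constant $\theta'$ is the threshold at which the \emph{summed} error over all sieve vectors is $O(B^{n-Rd-\delta})$. Your description conflates the two.
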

This provides a
lower bound 
$\log P^-(x_1\cdots x_n)/\log B$
in terms of $n$ 
that vanishes logarithmically slow as $n\to+\infty$, which constitutes 
a large improvement over 
the
previously
best known result that gave
a polynomial decay~\cite{magtit}.
The proof of Theorem~\ref{thm:mainvector}
will be given in \S\ref{s:mainthm}.
A crucial input for the sieving arguments 
will be a general 
version of Birch's theorem that we
shall prove in~\S\ref{s:genebi},
see Theorem~\ref{thm:master2}. Note that a similar result for one quadratic form is proved in work of Browning and Loughran \cite[Theorem 4.1]{BroLou17}, whereas our result aims at 
general complete intersections.  
More importantly, 
Theorem~\ref{thm:master2}
allows situations where 
congruence conditions 
are imposed to every integer coordinate
with a different moduli for every coordinate,
while in their result one is only allowed to consider the same moduli for every coordinate.
This extra feature will be of central importance for the vector sieve.

An inspection of the argument at the end of \S\ref{s:mainthm}
shows that 
we
can take $c_0=3$ 
in
Theorem~\ref{thm:mainvector}
when the number of variables $n$
is 
sufficiently large.
For $s\in \mathbb{R}_{>2}$ let $0<f(s)\leq 1\leq F(s)$ 
be the sieve functions associated to the linear Rosser--Iwaniec sieve, defined for example in~\cite{MR581917}, which
satisfy $F(s),f(s)=1+O(s^{-s})$.
One can improve the lower bound for 
$\log P^-(x_1\cdots x_n)/\log B$
given by Theorem~\ref{thm:mainvector}
by
replacing
the term
$
\frac{c_0\log n}{\log \log n}
$
by
any value $s>2$
that satisfies 
\[F(s)^n<\Big(1+\frac{1}{n-1}\Big)f(s).\]

A special case of Theorem~\ref{thm:mainvector} is 
the case of non-singular hypersurfaces.
\begin{corollary}
\label{cor:mainvector}
There exists a
positive
absolute constant $c_1$ such that 
whenever
$f$
is an integer non-singular
form 
of degree $d\geq 5$
 in more than $2^{d-1}(d^2-1)$
variables
that fulfils
~\eqref{cond:units}
then the following estimate
holds for all large enough $B\geq 1$, 
\[
\#
\Big\{
\b{x} \in ((0,B]\cap\N)^n:f(\bfx)=0,
P^-(x_1\cdots x_n)>
B^{\frac
{c_1 \log \log n}
{d \log n}
}
\Big\}
\gg
\frac{B^{n-d}}{(\log B)^n}
.\] 
\end{corollary}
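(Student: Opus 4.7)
My approach is to deduce the corollary directly from Theorem~\ref{thm:mainvector} by specialising to $R=1$. The key geometric input is that a non-singular integral form $f$ of degree $d$ has Birch rank $\mathfrak{B}(f)=n$: Euler's identity $df(\bfx)=\bfx\cdot\nabla f(\bfx)$ combined with non-singularity forces the simultaneous vanishing of the partial derivatives to occur only at the origin, so the Birch singular variety has dimension $0$ and the codimension equals $n$.

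With $R=1$ and $\mathfrak{B}(f)=n$, formula~\eqref{def:theY} simplifies to $\Upsilon = dn/2^{d-1}+1$. I would then verify that, for $d\geq 5$, the third threshold in the maximum of Theorem~\ref{thm:mainvector}, namely $2^{d-1}(d^2-1)$, dominates the other two. The first threshold $2^{d}(d-1)=2\cdot 2^{d-1}(d-1)$ is dominated since $d^2-1=(d-1)(d+1)>2(d-1)$. The middle threshold becomes, after substitution, the demand that $n>2^{d-1}(d-1)+2(dn/2^{d-1}+2)$, which rearranges to $n(1-d/2^{d-2})>2^{d-1}(d-1)+4$; because $1-d/2^{d-2}\geq 3/8$ for $d\geq 5$, a direct comparison shows the hypothesis $n>2^{d-1}(d^2-1)$ already implies this inequality. (It is precisely this inequality that pins down the threshold $d\geq 5$: for $d=4$ the coefficient $1-d/2^{d-2}$ vanishes.) Hence Theorem~\ref{thm:mainvector} applies.

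It remains to bring the exponent into the form $c_1(\log\log n)/(d\log n)$. Theorem~\ref{thm:mainvector} supplies $P^-(x_1\cdots x_n)>B^{\theta'(\log\log n)/(c_0\log n)}$, and the remark following the theorem permits $c_0=3$ once $n$ is large, so it suffices to show $\theta'\gg 1/d$ with an absolute implied constant. With $R=1$, the quantity in~\eqref{def:rrr} is $\rho=8d\bigl(1+\tfrac{d}{2d-1}+\tfrac{3d}{3d-2}\bigr)=\Theta(d)$, which gives $1/\rho\asymp 1/d$. For each of the three ratios $(\epsilon_{i,1}-d)/(\epsilon_{i,2}+\epsilon_{i,3})$ in~\eqref{def:the}, one invokes the definition of $K=K(\bff)$ from~\eqref{def:birchrank} and the fact that, under the Birch rank hypothesis, $K$ is controlled in a way that keeps the denominators $\epsilon_{i,2}+\epsilon_{i,3}$ of order $d$; in particular $\epsilon_{1,1}-d=1/2$ must be divided by a quantity of comparable order to $d$. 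Taking the minimum of the four quantities then yields $\theta'\gg 1/d$, and absorbing $c_0$ produces the required exponent.

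The main delicate point is the last paragraph: verifying that each of the three ratios involving $K$ really contributes a factor of order $1/d$, rather than degrading to $1/n$. This relies on the precise definition of $K(\bff)$ from~\S\ref{s:genebi}, together with a careful check that the numerators and denominators in the matrix~\eqref{def:epss} scale consistently in~$d$, uniformly once $n$ exceeds the threshold $2^{d-1}(d^2-1)$. Once this scaling is in hand, the corollary is immediate.
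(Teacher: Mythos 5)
Your route is exactly the intended one: the paper offers no separate proof of this corollary, treating it as the direct specialisation of Theorem~\ref{thm:mainvector} to $R=1$ with $\mathfrak{B}(f)=n$ (correct, since for a non-singular form the locus $\nabla f=0$ is the origin), and your verification of the three thresholds, including the observation that the middle one is what forces $d\geq 5$, is accurate. The one step you defer -- that the ratios $(\epsilon_{i,1}-d)/(\epsilon_{i,2}+\epsilon_{i,3})$ for $i=2,3$ do not degrade below $1/d$ -- does go through, but not quite for the reason you suggest: with $K=n/2^{d-1}$ these denominators are \emph{not} of order $d$; they are of order $K$, which is unbounded in $n$. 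What saves the ratios is that the numerators scale with $K$ as well. Explicitly, for $i=2$ one gets $(K-d)/K\geq 1-d/(d^2-1)\gg 1$ using $K>d^2-1$; for $i=3$ the numerator is $\frac{K-2(d-1)}{8d}\geq \frac{K}{8d}\cdot\frac{d-1}{d+1}\gg K/d$, the $\max$ in $\epsilon_{3,3}$ is attained at $0$ (since $\frac{K-2(d-1)}{8d}-1-K+d<0$), and $\epsilon_{3,2}\leq K\bigl(\frac{d}{2(d-1)}+1+\frac{2d}{d-1}\bigr)+1\ll K$, so the third ratio is $\gg 1/d$ with an absolute constant. Combined with $1/\rho\asymp 1/d$ and the first ratio $1/(2d+1)$, this gives $\theta'\gg 1/d$ and the corollary follows as you state.
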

Our results require a few more variables than in the Birch setting, 
which for non-singular hypersurfaces
requires
$n>2^{d}(d-1)$.
The reason for this is rooted to the way that the vector sieve works:
in introducing $n$ linear sieving functions  
in place of a single $n$-dimensional lower bound sieve 
the technique requires that we have a good control on 
the independency of the events that a large prime $p$
divides several coordinates of an integer zero,
this is related to the function $\delta$ that will be studied in~\S\ref{s:localden}.
The Birch assumption $$\mathfrak{B}(\bff)>2^{d-1}(d-1)R(R+1)$$
does not always allow a good bound for $\delta$, however
a slightly stronger geometric 
assumption will be shown to be sufficient via a version of 
Weyl's inequality that is uniform in the coefficients
of the underlying polynomials.
It
must
be noted that the 
work of 
Yamagishi~\cite[Th.1.3]{arXiv:1709.03605} 
only
applies to 
smooth
hypersurfaces 
in  
$n>8^d (4d-2)$ variables,
which ought to be compared 
with the assumption 
$n>2^{d-1}(d^2-1)$
 of Corollary~\ref{cor:mainvector}

\subsection{Applications to the saturation problem}

One further advantage of Theorem~\ref{thm:master2}
is that it allows the use of any smooth weight with compact support.
We can therefore establish
a version of Theorem~\ref{thm:mainvector}
where one counts solutions near an arbitrary non-singular point in $V_\bff(\R)$.
This allows to settle Sarnak's problem for the complete intersections under consideration.
To phrase our result 
we first need the following definition.
Each
 $x\in \P^{n-1}(\Q)$ can be written uniquely
up to sign
in the form $x=[\pm \b{x}]$, where $\b{x}=(x_1,\ldots,x_{n})\in \Z^n$ and $\gcd(x_1,\ldots,x_{n})=1$.
We can then define the function $\c{L}:\P^{n-1}(\Q)\to \R_{\geq 0}$
through
\[
\c{L}(x):=\max_{\substack{1\leq i \leq n\\ x_i\neq 0}}
\max 
\bigg\{\frac{\log |x_i|}{\log p}:
p \text{ is a prime  dividing }  x_i
\bigg\}
.\]
Thus
$\c{L}(x)\leq u$ holds
for some
$x=[\pm
(x_1,\ldots,x_n)]\in \P^{n-1}(\Q)$ and $u\in \R_{\geq 0}$
if and only if 
\[
x_i\neq 0
\Rightarrow 
P^{-}(|x_i|)
\geq 
|x_i|^{1/u}
.\]
\begin{definition}
[Level of saturation]
\label{def:roughness}
Assume that 
$X\subset \P^{n-1}$ is a variety defined over $\Q$.
The 
level of saturation
of $X$ 
is the infimum of all real non-negative
numbers $u$ 
such that  
\[
\{x\in X(\Q):\c{L}(x)\leq u
\}
\]
is Zariski dense in $X$.
\end{definition}
Note that in this definition 
the level of saturation is allowed to be infinite,
for example if $X(\Q)$ is not Zariski dense.
Recalling the definition of the number of prime divisors 
$\Omega_{\P^{n-1}(\Q)}(x)$
of a rational point $x\in \P^{n-1}(\Q)$
in the paragraph before~\cite[Def.1.1]{yuchao}, we observe that if $\prod_{i}x_i\neq 0$
then 
\[\Omega_{\P^{n-1}(\Q)}(x)
\leq n
\c{L}(x)
.\]
Therefore,
according to~\cite[Def.1.1]{yuchao},
if $X$ has a finite level of saturation 
then it has a finite saturation number.
Therefore
one could perceive
Definition~\ref{def:roughness}
as 
a refinement
of the standard
notion of saturation.

\begin{theorem}
\label{thm:levelsaturation}
There exists a positive absolute constant $c_0$ such that whenever 
the forms $f_1,\ldots,f_R\in \Z[x_1,\ldots,x_n]$ of degree $d\geq 2$
satisfy~\eqref{cond:units}
and
\[
\mathfrak{B}(\b{f})
>
\max\big\{
2^{d-1}(d-1)
R(R+1),
2^{d-1}
(d-1)R^2
+(R+1)
(\Upsilon+1),
2^{d-1}
(d^2-1)R^2
\big\}
\]
and 
the complete intersection in $\P^{n-1}$ that is defined through
\[
V_\b{f}:
\
f_1=f_2=\cdots=f_R=0\]
is geometrically irreducible
then $V_\b{f}$
has finite 
level of saturation.
In addition, the 
level of saturation
is at most  
\[ 
\frac{c_0\log n}
{
\theta'
 \log \log n}
,\]
where the constant $\theta'$ satisfies $\theta'\gg_{d,R} 1$.
\end{theorem}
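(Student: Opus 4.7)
The plan is to deduce the theorem from a smoothly weighted, localised variant of Theorem~\ref{thm:mainvector}. As the authors indicate in the paragraph preceding the statement, Theorem~\ref{thm:master2} applies with any smooth compactly supported weight, so the sieve manipulations that yield Theorem~\ref{thm:mainvector} actually give the following: for any nonsingular real zero $\b{z}_0 \in V_\bff(\R) \cap (0,\infty)^n$ and any sufficiently small $\delta > 0$,
\[
\#\Big\{\b{x} \in \Z^n \cap B(\b{z}_0 + [-\delta,\delta]^n) : \b{f}(\b{x})=\b{0},\ P^-(x_1 \cdots x_n) > B^{\theta' \log\log n/(c_0 \log n)}\Big\} \gg \frac{B^{n-Rd}}{(\log B)^n}
\]
for all large $B$. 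Condition~\eqref{cond:units} guarantees that at least one such $\b{z}_0$ exists.

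Set $u := c_0 \log n/(\theta' \log\log n)$. For any solution $\b{x}$ produced above, write $g := \gcd(x_1,\ldots,x_n)$ and $\b{x}' := \b{x}/g$ for its primitive representative, so that $[\pm \b{x}']$ is a well-defined rational point of $V_\bff(\Q)$. Since $x_i' \mid x_i$, one has $P^-(|x_i'|) \geq P^-(|x_i|) > B^{1/u} \geq |x_i|^{1/u} \geq |x_i'|^{1/u}$ whenever $x_i' > 1$, and the trivial inequality $P^-(1) = 1 \geq 1^{1/u}$ handles the remaining case. Hence $\c{L}([\pm \b{x}']) \leq u$, and the primitive class of every such $\b{x}$ belongs to $\{x \in V_\bff(\Q) : \c{L}(x) \leq u\}$.

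To deduce Zariski density, let $W \subsetneq V_\bff$ be any proper closed subvariety. The geometric irreducibility of $V_\bff$ combined with the existence of a nonsingular real point in $(0,1)^n$ supplied by~\eqref{cond:units} implies that $V_\bff(\R) \cap (0,\infty)^n$ is Zariski dense in $V_\bff$ (a real analytic neighbourhood of that point has real dimension equal to $\dim V_\bff$, so its complex Zariski closure is all of $V_\bff$). One can therefore choose a nonsingular $\b{z}_0 \in V_\bff(\R) \cap (0,\infty)^n$ with $[\b{z}_0] \notin W(\R)$. Picking $\delta > 0$ small enough that $[\b{y}] \notin W(\R)$ holds for every $\b{y} \in \b{z}_0 + [-\delta,\delta]^n$ and applying the displayed estimate with this $(\b{z}_0,\delta)$, one obtains, for all large $B$, integer solutions $\b{x}$ whose primitive classes $[\pm \b{x}']$ lie outside $W$ and satisfy $\c{L}([\pm \b{x}']) \leq u$. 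Since $W$ was arbitrary, the set of points of level $\leq u$ is Zariski dense in $V_\bff$, and the claimed upper bound on the level of saturation follows.

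The main obstacle is the initial step: one has to verify that the vector sieve argument behind Theorem~\ref{thm:mainvector} really carries over to an arbitrary smooth weight localised near $\b{z}_0$. This reduces to checking that the singular integral appearing in the Davenport--Birch circle method remains strictly positive, which is guaranteed by nonsingularity of $\b{z}_0$, and that the local density estimates in \S\ref{s:localden} are insensitive to the choice of weight. The subsequent primitivity deflation and the Zariski density argument are elementary given the geometric irreducibility hypothesis.
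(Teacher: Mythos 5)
Your proposal is correct and follows essentially the same route as the paper: the sieve argument of \S\ref{s:mainthm} is already carried out with a smooth weight localised at an arbitrary non-singular real point $\boldsymbol{\zeta}$, so $S_{\boldsymbol{\zeta}}(B,B^{\theta/s})\gg B^{n-Rd}(\log B)^{-n}$ holds for every such $\boldsymbol{\zeta}$, and Zariski density then follows from geometric irreducibility by choosing the point (and a small enough neighbourhood) off any prescribed proper closed subvariety. You merely spell out the primitivity deflation and the density argument in more detail than the paper's terse closing paragraph.
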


\subsection{Results via the Rosser--Iwaniec sieve}
We next provide an almost prime result that covers all situations in the Birch setting,
thus completing the treatment of the cases not covered by
Theorem~\ref{thm:mainvector}.
This will provide a lower bound for
$\log P^-(x_1\cdots x_n)/\log B$
that is worse than the one in Theorem~\ref{thm:mainvector}
but still better than~\eqref{eq:bmagt};
this is due to the strength of the level of distribution
result implied by Theorem~\ref{thm:master2}. 
\begin{theorem}
\label{thm:rosiwa}   
For any forms $f_1,\ldots,f_R\in \Z[x_1,\ldots,x_n]$ of degree $d\geq 2$
satisfying~\eqref{cond:units}
and $K>R(R+1)(d-1)$
we have for all large enough $B\geq 1$, 
\[
\#
\Big\{
\b{x} \in ((0,B]\cap\N)^n:\b{f}(\bfx)=\b{0},
P^-(x_1\cdots x_n)>
B^{ \frac{\theta'}{3.75 n}}
\Big\}
\gg
\frac{B^{n-Rd}}{(\log B)^n}
,\]
where $\theta'$ is given in~\eqref{def:the} and satisfies $\theta'\gg_{d,R} 1$.
\end{theorem}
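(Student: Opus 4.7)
The plan is to apply the Rosser--Iwaniec $\beta$-sieve of dimension $n$ to the integer zeros of $\bff$ inside $(0,B]^n$, with level of distribution furnished by Theorem~\ref{thm:master2}. Set
\[
\c{A} := \bigl\{\bfx \in ((0,B]\cap \N)^n : \bff(\bfx) = \b{0}\bigr\}, \qquad \c{A}_\bfd := \bigl\{\bfx \in \c{A} : d_i \mid x_i\ \text{for}\ 1\leq i\leq n\bigr\},
\]
and $z := B^{\theta'/(3.75\,n)}$, $D := B^{\theta'}$. One first invokes Theorem~\ref{thm:master2} to establish, uniformly for squarefree $\bfd=(d_1,\ldots,d_n)$ with $\prod_i d_i \leq D$, the asymptotic
\[
\#\c{A}_\bfd = g(\bfd)\,\#\c{A} + r(\bfd), \qquad \sum_{\substack{\bfd \in \N^n \\ \prod_i d_i \leq D}} |r(\bfd)| = o\!\left(\frac{B^{n-Rd}}{(\log B)^n}\right),
\]
where $g:\N^n \to \R_{\geq 0}$ is the coordinate-wise multiplicative density arising from the local $p$-adic solution counts of $\bff$. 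The crucial point is that Theorem~\ref{thm:master2} permits genuinely distinct moduli $d_i$ per coordinate, since otherwise one would only reach level $B^{\theta'/n}$ and lose a factor $n$ in the exponent of~$z$.

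Next one performs the local density analysis (along the lines of~\S\ref{s:localden}) to verify that, for primes $p$ not dividing the relevant discriminant data,
\[
\mathbf{1}_{\{\bff(\bfx)=\b{0}\}} \cdot \mathbf{1}_{\{p\mid x_1\cdots x_n\}}
\]
has relative density $n/p + O(n^2/p^2)$ inside $\c{A}$, so the sieve problem has dimension $\kappa = n$; positivity of the singular series is ensured by~\eqref{cond:units}. Applying the dimension-$n$ Rosser--Iwaniec $\beta$-sieve then yields
\[
S(\c{A}, z) \;\geq\; \bigl(f_n(s) + o(1)\bigr)\,\#\c{A}\prod_{p < z}\bigl(1 - g_p\bigr),\qquad s := \frac{\log D}{\log z},
\]
provided $s$ exceeds the sieving limit $\beta(n)$ of the dimension-$n$ $\beta$-sieve. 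Our calibration forces $s = 3.75\,n$, which lies above the known upper bound $\beta(n) \leq 3.75\,n$; combining $\#\c{A} \asymp B^{n-Rd}$ with $\prod_{p<z}(1-g_p) \gg (\log B)^{-n}$ then delivers the claimed lower bound.

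The main obstacle lies in justifying the level of distribution in the mixed-moduli regime $\prod_i d_i \leq D$. A divisor-function inflation enters the remainder sum, and one must check that the quantitative bound furnished by Theorem~\ref{thm:master2} genuinely absorbs this loss. This is precisely what the smooth-weighted circle-method refinement underlying Theorem~\ref{thm:master2} is engineered to handle, its strength being secured by the Birch-rank hypothesis $K > R(R+1)(d-1)$ through the definition~\eqref{def:the} of $\theta'$. Once the level-of-distribution estimate and the sieve-limit bound $\beta(n) \leq 3.75\,n$ are in place, the remaining sieve assembly and the verification that the singular series stays bounded away from zero are routine.
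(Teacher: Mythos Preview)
Your approach is essentially the one the paper takes: apply the Rosser--Iwaniec $\beta$-sieve of dimension $n$ to the product $x_1\cdots x_n$, feed in level of distribution from Theorem~\ref{thm:master2} (packaged as Lemma~\ref{lem:levofdi}), verify sieve dimension $n$ via the local density calculation $g(p)=n/p+O(p^{-1-\eps})$ (Lemma~\ref{lem:cookmag}), and invoke the sieving-limit bound $\beta_n\le 3.75\,n$ from~\cite[Th.~17.2, Prop.~17.3]{MR2458547}. Two points in your write-up deserve attention, however.

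First, the paper does not sieve your $\c{A}$ directly but restricts to a fixed residue class $\bfx\equiv\bfy\pmod W$ with $\gcd(y_1\cdots y_n,W)=1$ (the $W$-trick of \S\ref{s:bwv1056largo}) and replaces the sharp box by smooth weights via~\eqref{eq:bwvfminor} before invoking Theorem~\ref{thm:master2}. The $W$-trick removes primes $p\le z_0$ from the sieve so that $0\le g(p)<1$ is automatic, and the smooth weights are what produce the error term in Lemma~\ref{lem:levofdi} carrying the crucial factor $\widetilde{\bfk}^{-1}$. You gloss over both steps.

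Second, and more substantively, your level-of-distribution hypothesis is miscast. The Rosser--Iwaniec sieve acts on the \emph{scalar} sequence $\widetilde{\bfx}=x_1\cdots x_n$ with scalar moduli $k\le D$; what is needed is that $\sum_{k\le D}|R_k|$ be small, where $R_k$ is the error in counting $\{\bfx\in\c{A}:k\mid\widetilde{\bfx}\}$. The paper opens the condition $k\mid\widetilde{\bfx}$ via an identity adapted from~\cite[Lem.~8]{BF} into a signed sum over vectors $\bfk$ with $p\mid\widetilde{\bfk}\Leftrightarrow p\mid k$; such $\bfk$ satisfy $|\bfk|\le k\le D$, but $\widetilde{\bfk}$ may be as large as $k^n$. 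Your condition $\prod_i d_i\le D$ covers only a strict subset of these, so as stated your error bound is not the one the sieve actually requires. The paper's analysis succeeds precisely because the factor $\widetilde{\bfk}^{-1}$ in Lemma~\ref{lem:levofdi} compensates for the large $\widetilde{\bfk}$; this is the ``divisor-function inflation'' you allude to, but the bookkeeping has to be carried out over the correct range of $\bfk$.
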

\subsection{Results via the weighted sieve}
\label{s:we8}
Theorem~\ref{thm:rosiwa}
supplies a 
polynomially
fast convergence to zero
for
$\log P^-(x_1\cdots x_n)/\log B$ with respect to $n$.
This is slightly undesired, thus
we shall provide
a complementary result 
that furnishes many integer zeros satisfying a bound of similar quality for
$\log P^-(x_1\cdots x_n)/\log B$ with the additional desired property
that 
$x_1\cdots x_n$ has few prime factors. 
This will be implemented via the weighted sieve.
We choose
to include this result here because along the proof
we shall
provide a potentially
useful
reformulation of the weighted sieve given in the book of
Diamond and Halberstam~\cite{MR2458547}.
This reformulation allows 
the
incorporation of further weights
and will be given in Theorem~\ref{thm:weighted0}.

Define
\begin{equation}
\label{eq:epikoskafes}
u'':=(n-Rd) 
\max\Bigg\{\frac{
(2\epsilon_{i,2}-1)}{\epsilon_{i,1}-Rd}:1\leq i \leq 3
\Bigg\},\end{equation}
\begin{equation}
\label{eq:epikoskafes2}
\widehat{u}:=
\max\Big\{u'',
1/\theta',
2(n-Rd)
\rho\Big\},
\
\widehat{v}
:=
\frac{n c_n-1}{\theta'-1/\widehat{u}}
,\end{equation}
where
$c_n$ is a sequence that satisfies
$\lim_{n\to+\infty}c_n=2.44\ldots$.
We furthermore let 
\begin{equation}
\label{eq:epikoskafes3}
r_0:=
\frac{n\widehat{u}}{n-Rd}
-1
+
n\Big(1+\frac{\widehat{u}}{\widehat{v}}c_n\Big)
\log \frac{\widehat{v}}{\widehat{u}}
-n
\Big(1-\frac{\widehat{u}}{\widehat{v}}\Big)
.\end{equation}
\begin{theorem}
\label{thm:weighted}
For any forms $f_1,\ldots,f_R\in \Z[x_1,\ldots,x_n]$ of degree $d\geq 2$
satisfying~\eqref{cond:units}
and   
$\mathfrak{B}(\b{f})
>
\max\{(d-1)R(R+1)2^{d-1},
(d^2-1)R2^{d-1}
,
(d-1)R^2
2^{d-1}
+2(R+1)
\}
$
we have for all $r_1>r_0$
and all large enough $B\geq 1$, 
\[
\#
\Big\{
\b{x} \in ((0,B]\cap\N)^n:\b{f}(\bfx)=\b{0},
P^-(x_1\cdots x_n)
>B^{1/\widehat{v}},
\Omega(x_1\cdots x_n) 
\leq   r_1
\Big\}
\gg
\frac{B^{n-Rd}}{(\log B)^n}
,\]
where
$\widehat{v}=O_{d,R}(n)$ and
$
r_0=O_{d,R}(n \log  n)
$.
\end{theorem}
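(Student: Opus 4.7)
The plan is to apply the weighted sieve of Diamond--Halberstam~\cite{MR2458547}, reformulated in multivariate form as the authors' forthcoming Theorem~\ref{thm:weighted0}, to the set
\[
\mathcal{A}_B:=\{\bfx\in((0,B]\cap\N)^n:\b{f}(\bfx)=\b{0}\},
\]
with the asymptotic formula from Theorem~\ref{thm:master2} providing the level-of-distribution input. Theorem~\ref{thm:master2} is the key enabling ingredient, because it allows \emph{distinct} moduli on the different coordinates $x_i$ with total product up to $B^{\theta'-o(1)}$, and it accommodates arbitrary smooth weights; both features are needed for a coordinate-wise weighted sieve to make sense.

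First I would set the sifting parameter $z:=B^{1/\widehat v}$ and the Richert cut-off $y:=B^{1/\widehat u}$, the choices in~\eqref{eq:epikoskafes2} guaranteeing that any product of $n$ moduli of size $\leq y$ stays within the admissible level of distribution. To each coordinate I would attach the standard Richert weight
\[
\omega(x_i):=1-\sum_{\substack{z\leq p<y\\ p\mid x_i}}\Big(1-\frac{\log p}{\log y}\Big),
\]
and consider the weighted count
\[
W:=\sum_{\substack{\bfx\in\mathcal{A}_B\\ P^-(x_1\cdots x_n)>z}}\sum_{i=1}^n\omega(x_i).
\]
A short pointwise argument (using that $P^-(x_i)>z$, that $\Omega^{[y,B]}(x_i)\leq \widehat u$ by the size of $x_i$, and that positive $\omega(x_i)$ forces few prime factors of $x_i$ in $[z,y)$) shows that every $\bfx$ producing a positive summand in $W$ satisfies $\Omega(x_1\cdots x_n)\leq r_0+o(1)$, with $r_0$ precisely as in~\eqref{eq:epikoskafes3}. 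Consequently it suffices to prove $W\gg B^{n-Rd}/(\log B)^n$.

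Expanding $\omega(x_i)$ reduces $W$ to a linear combination of constrained sums of the form $\#\{\bfx\in\mathcal{A}_B:p\mid x_i,\ P^-(x_j)>z\ \forall j\}$ with $p\in[z,y)$. Each of these, via Theorem~\ref{thm:master2} applied with an appropriate smooth weight, admits an asymptotic of shape $\mathfrak{S}_\infty(p)\,\mathfrak{J}\,B^{n-Rd}$ with a total error that is negligible after averaging. The multivariate weighted sieve (Theorem~\ref{thm:weighted0}) then combines these inputs into
\[
W\geq \frac{B^{n-Rd}\,\mathfrak{S}_\infty\,\mathfrak{J}}{(\log B)^n}\,\bigl(r_1-r_0+o(1)\bigr),
\]
which is strictly positive since $r_1>r_0$ and $\mathfrak{S}_\infty,\mathfrak{J}>0$ by~\eqref{cond:units}. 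The explicit expression for $r_0$ is what the Diamond--Halberstam framework produces after optimisation in sieve dimension $n$ at the sifting argument $s\asymp\theta'\widehat v$; the constant $c_n\to 2.44\ldots$ is the large-dimension limit of the relevant threshold function, which also forces the linear growth $\widehat v=O_{d,R}(n)$ and the bound $r_0=O_{d,R}(n\log n)$.

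The principal obstacle is the degradation of the weighted sieve in large dimension: classical Richert-type weights lose almost all strength once the sieve dimension exceeds a few units, and a naive application would make $r_0$ scale polynomially in $n$. Controlling the combinatorics up to dimension $n$ with only a logarithmic loss is the role of the reformulation in Theorem~\ref{thm:weighted0}, where smooth weights are built into the sieve so as to marry cleanly with the smoothly-weighted circle-method counting of Theorem~\ref{thm:master2}; this in turn is what enables the assumption on $\mathfrak{B}(\b{f})$ to be the slightly weaker one stated here rather than the one used in Theorem~\ref{thm:mainvector}. With Theorem~\ref{thm:weighted0} in hand, the remainder of the argument is an assembly step: choose $\widehat u,\widehat v$ respecting~\eqref{eq:epikoskafes2}, verify applicability of Theorem~\ref{thm:master2}, and conclude from the positivity of the arithmetic factors.
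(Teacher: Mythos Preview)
Your plan rests on a misreading of Theorem~\ref{thm:weighted0}. That theorem is \emph{not} a ``multivariate'' or coordinate-wise weighted sieve: it is the ordinary Diamond--Halberstam weighted sieve applied to a \emph{single} integer sequence $\overline{m}=\pi(m)$, the only new feature being that one may attach an arbitrary non-negative weight $h(m)$ to the elements of a multiset. In the paper's application one takes $\pi(\bfx)=x_1\cdots x_n$ and $h(\bfx)=\prod_i w(x_i/B-\zeta_i/2|\boldsymbol{\zeta}|)$; the sieve dimension $\kappa=n$ then arises from the density estimate $g(p)=n/p+O(p^{-1-\eps})$ of Lemma~\ref{lem:cookmag}, and the Richert weight is applied once to the whole product $\overline{m}$, not separately to each coordinate. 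Your quantity $W=\sum_{\bfx}\sum_i\omega(x_i)$ is a different object, and the pointwise implication ``$\sum_i\omega(x_i)>0\Rightarrow\Omega(x_1\cdots x_n)\le r_0$ with $r_0$ exactly as in~\eqref{eq:epikoskafes3}'' does not follow from that setup; the precise shape of $r_0$ comes from the dimension-$n$ Diamond--Halberstam optimisation recorded in~\eqref{eq:lower2r}, which presupposes the single-sequence framework.

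Two further ingredients are missing from your sketch. First, the $W$-trick (congruences $\bfx\equiv\bfy\md{W}$, conditions~\eqref{eq:wmtrick}--\eqref{cond:posit}) is needed before sieving to ensure the small primes $p\le z_0$ are already excluded and the local factors are positive. Second, and more substantively, the specific Birch-rank hypothesis with the term $2(R+1)$ enters \emph{only} through the verification of condition $\mathbf{Q}(u,v)$ in~\eqref{cond:Q}: one must control $\sum_{p\in[X^{1/v},X^{1/u}]}\sum_{\widetilde{\bfk}=p^2}\varpi(\bfk)$, and this is done via Corollary~\ref{cor2} applied with $|\bfj|_1\le 2$, which is exactly what forces $\mathfrak{B}(\bff)>(d-1)R^22^{d-1}+2(R+1)$. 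The lower bound $u>u''$ in~\eqref{eq:epikoskafes} and the definition of $\widehat{u}$ in~\eqref{eq:epikoskafes2} come from making the associated error term negligible. Your proposal neither mentions $\mathbf{Q}(u,v)$ nor explains why this hypothesis on $\mathfrak{B}(\bff)$ is the right one; without this step the weighted sieve cannot be applied.
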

A simple 
consequence of Theorem~\ref{thm:mainvector}
is that it provides many integer zeros $\bfx$ with 
\[
\Omega(x_1\cdots x_n) \ll \frac{n \log n}{\log \log n}
,\]
which constitutes an asymptotic saving compared to the estimate 
\[
\Omega(x_1\cdots x_n) \ll n \log n 
\]
supplied by Theorem~\ref{thm:weighted}.
This is surely
surprising to those familiar with 
the weighted sieve and its applications to higher dimensional sieve problems.
The reason that the vector sieve gives a better
saturation number here is the strong level of distribution supplied by 
Theorem~\ref{thm:master2}, which is a result  of using smooth weights.
Indeed, Theorem~\ref{thm:master2} allows to estimate asymptotically the  
number of 
integer solutions of $\bff(\bfx)=\b{0}$ subject to divisibility conditions
of the form
$k_i|x_i$
for $\bfx$ 
in a region having the shape
$\bfx \in B[-1,1]^n$ 
and vectors $\bfk \in \N^n$ of size 
$|\bfk|\leq B^{1/s}$,
where $s>1$ depends on $d$ and $R$
but not on $n$.
Such a level of distribution is usually
not available 
in other problems related to the weighted sieve.
\begin{notation}
We shall reserve the symbol 
$\nu(m)$ for the counting function of distinct prime factors of a positive integer $m$.
For vectors 
$\b{x} \in \R^n$, $n \in \N$,
we shall reserve the symbols
$|\b{x}|$ and 
$|\b{x}|_1$ 
for the supremum and the $\ell^1$
norm respectively.
For vectors $\bfk,\bfx \in \N^n$ we shall abbreviate the simultaneous conditions
$k_i|x_i$ by $\bfk|\bfx$. Similarly we write $\bfk\leq \bfx$ or $\bfk < \bfx$ or $|\bfk|\leq \bfx$ for the simultaneous conditions $k_i\leq x_i$ (resp. $k_i < x_i$ and $|k_i|\leq x_i$) for $1\leq i\leq n$. 
We shall furthermore find it convenient to introduce the notation
\[
\wbk
:=k_1\cdots k_n
,\]
as well as 
\[
\langle\bfk\bfx \rangle
= (k_1x_1,\ldots, k_nx_n).\] 
For $q \in \N$, $z \in \mathbb{C}$ 
we shall write
\[
e_q(z):=e^{\frac{2 \pi i z}{q}}
\
\text{and}
\
e(z):=e^{2 \pi i z}
.\]
The letter $\epsilon$
will refer to an arbitrarily small positive fixed constant
and to ease the notation we shall not record the dependence of the implied constant in the 
 $\ll$ and $O(\cdot)$ notation.
The letter $w$ will be reserved to denote certain weight functions
that will be considered constant throughout our work, thus 
we shall not record the dependence of the implied constant in the 
 $\ll$ and $O(\cdot)$ notation.
Throughout our work the
forms 
$\bff$ are considered to be constant,
thus
each implied constant in the $\ll$ and $O(\cdot)$ notation 
will depend on the coefficients of $\bff,d,n,z_0$ and $W$,
where the constants $z_0,W$ are functions of $\bff$ whose meaning
will become clear in due course.
Any extra dependencies will be specified by the use of a subscript.
\end{notation}

{\bf Acknowledgements:} We would like to thank Prof. T. D. Browning and Dr. S. Yamagishi for their comments on an earlier version of this paper,
as well as the anonymous referee for numerous helpful comments that have clarified the exposition considerably.
The first author is supported by a NWO grant 016.Veni.173.016.    
\section{A version
of
Birch's theorem with lopsided boxes
and smooth weights
}
\label{s:genebi}
In our applications of sieve methods it will be important to 
be able to count integer zeros of $\bff(\b{x})=\b{0}$ 
such that each integer coordinate $x_i$ is divisible by a fixed integer $k_i\leq |x_i|$.
A change of variables makes clear
that a version of Birch's theorem with lopsided boxes
and with uniformity of the error term in the coefficients of the polynomials is sufficient. 
One can do this without smooth weights however 
the resulting error terms will give a weak level of distribution
for our sieve applications.
We shall instead
use smooth weights 
and as a result we shall later be able 
to take $k_i$ much closer to the size of $x_i$.

We now proceed to describe the version of Birch's theorem that we shall need.
Assume that we are given any finite collection of polynomials 
\[
g_i\in \Z[x_1,\ldots,x_n],
1\leq i \leq R
,\]
denote the homogeneous part of $g_i$ by $g_i^\natural$
and assume that there exists 
$d \in \N_{\geq 2}$
such that 
\[
1\leq i \leq R
\Rightarrow
\deg(g_i^\natural)=d
.\]
The Birch rank, denoted by $\mathfrak{B}(\b{g}^\natural)$,
is defined as the codimension of the affine variety in 
$\mathbb{C}^n$
which is
given by 
$$\mathrm{rank}\Bigg(\left(\frac{\partial g_i^\natural(\bfx)}{\partial x_j}\right)_{\substack{1\leq i\leq R,1\leq j\leq n}}\Bigg)<R.$$
%when considered as a subvariety in affine $n$-space
We set
\begin{equation}
\label{def:birchrank}
K:=2^{-(d-1)}\mathfrak{B}(\b{g}^\natural).
\end{equation}
Let us fix any smooth compactly supported weight function $w: \R \rightarrow \R_{\geq 0}$ with the property
$\supp(w) \subset [-2,2]$. For  $\bfP=(P_1,\ldots,P_n) \in (\R_{\geq 1})^n$ 
we denote
$$
\widetilde{\b{P}}
:=\prod_{i=1}^n P_i,
\
P_{\max} 
:= \max_{1\leq i\leq n} P_i \ \text{ and } \ 
P_{\min}
:=\min_{1\leq i\leq n} P_i$$
and fix an element $\bfz\in[-1,1]^n$. Our aim is to find an asymptotic formula for
the counting function
$$N_w(\bfP):= \sum_{\substack{\bfy\in \Z^n\\ \bfg(\bfy)=\mathbf{0}}}
\prod_{i=1}^n w\left(\frac{y_i}{P_i}-z_i\right).$$
Birch's influential work~\cite{Bir62} treated the case where
$w$ is replaced by the characteristic function of a finite interval and
$$K>R(R+1)(d-1),
P_{\min}=P_{\max}.$$
For our 
applications of sieving methods a result that is uniform in the size of each $P_i$
as well as the coefficients of each $g_i$ is required.
For $h \in \mathbb{C}[x_1,\ldots,x_n]$
we denote by $\|\b{h}\|$ the maximum of the absolute values of its coefficients 
and for $h_1,\ldots,h_R \in \mathbb{C}[x_1,\ldots,x_n]$ we let
\[
\|\b{h}\|:=\max\{\|h_i\|:1\leq i \leq R\}
.\] 
\begin{theorem}
\label{thm:master2}
Let $g_i,w,\b{z},P_i$ be as above,
assume that  
$K>R(R+1)(d-1)$
and 
\begin{equation}
\label{cond:master}
\frac{P_{\max}}{P_{\min}}<\Vert \bfg\Vert^{-\frac{1}{2R(d-1)+1}}\Vert \bfg^\natural\Vert^{-\frac{3R}{3R(d-1)+1}} P_{\max}^{\frac{1}{4R(R+1)d}}
.\end{equation} 
Then one has for each $\epsilon>0$,
\begin{align*} 
N_w(\bfP)-\grS J_w
\ll &
\widetilde{\b{P}} \left(P_{\max}/P_{\min}\right)^R P_{\max}^{-Rd-1/2}
+
\widetilde{\b{P}}^{1+\eps} \left(P_{\max}/P_{\min}\right)^{K}P_{\max}^{- K}
\\ 
+& 
\Vert\bfg^\natural\Vert^ 
{\frac{2K}{d-1}-R}
\Vert\bfg\Vert^
{\frac{K-R^2(d-1)}{2R(d-1)}}  
\widetilde{\b{P}}^{1+\eps}
\left(P_{\max}/P_{\min}\right)^{R+K} 
P_{\max}^{-Rd-\frac
{K-R(R+1)(d-1)}
{4R(R+1)d} 
}
,\end{align*}
where the implied constant depends at most on $\epsilon>0$.
Here $\grS$ and $J_\ome$ are the usual circle method singular series and singular integral and are defined in~\eqref{def:convser}
and~\eqref{def:convinte} respectively.
\end{theorem}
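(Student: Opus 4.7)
The plan is to establish the theorem by the Hardy--Littlewood circle method in the form pioneered by Birch~\cite{Bir62}, enhanced on the one hand by smooth cut-offs and on the other by careful bookkeeping that keeps track of the coefficients $\|\bfg\|$ and $\|\bfg^\natural\|$ and of the anisotropy of the box $\bfP$. I would set
\[
S(\boldsymbol{\alpha})
=
\sum_{\bfy\in\Z^n}
\prod_{i=1}^n w\!\left(\frac{y_i}{P_i}-z_i\right)
e(\boldsymbol{\alpha}\cdot \bfg(\bfy)),
\]
so that $N_w(\bfP)=\int_{[0,1)^R}S(\boldsymbol{\alpha})\,d\boldsymbol{\alpha}$ by orthogonality, and partition $[0,1)^R$ into major arcs $\grM$ (rational approximations $\b{a}/q$ with $q\leq Q$ and $|\boldsymbol{\alpha}-\b{a}/q|\leq QP_{\max}^{-d}$) and minor arcs $\grm$, for a parameter $Q$ to be optimised inside the range permitted by~\eqref{cond:master}.

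On $\grm$ I would run the $(d-1)$-fold Weyl differencing of Birch applied simultaneously to all $R$ forms, separating the homogeneous leading part $\bfg^\natural$ from the lower-order terms whose effect is measured by $\|\bfg\|$. The asymmetry of $\bfP$ forces the differencing parameters to be capped at $P_{\min}$, which is the ultimate source of the loss factors $(P_{\max}/P_{\min})^{K}$ and $(P_{\max}/P_{\min})^{R+K}$; the smooth weight $w$ enters advantageously here because $\widehat{w}$ is Schwartz, which allows one to replace the classical expression $\min(P_i,\|\cdot\|^{-1})$ by genuine rapid decay and so preserves the geometry-of-numbers counting of lattice points on the Birch variety associated with $\mathfrak{B}(\bfg^\natural)=2^{d-1}K$. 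The coefficient tracking through the $d-1$ Cauchy--Schwarz steps and through the subsequent lattice-point estimate produces the explicit exponents of $\|\bfg\|$ and $\|\bfg^\natural\|$ appearing in the third error term; hypothesis~\eqref{cond:master} is precisely the balance condition ensuring that these losses can still be absorbed into a non-trivial minor-arcs bound when integrated against the natural measure of $\grm$, yielding the second and third error terms.

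On $\grM$ I would write $\boldsymbol{\alpha}=\b{a}/q+\boldsymbol{\beta}$ and apply Poisson summation in each variable $y_i$ modulo $q$; the Schwartz decay of $\widehat{w}$ ensures that only the zeroth Fourier frequency contributes significantly, and $S(\boldsymbol{\alpha})$ factorises as a complete exponential sum in $\b{a}/q$ times the smooth oscillatory integral
\[
I_w(\boldsymbol{\beta})
=\int_{\R^n}\prod_{i=1}^n w(u_i-z_i)\,e\!\left(\boldsymbol{\beta}\cdot \bfg(P_1u_1,\ldots,P_nu_n)\right)d\b{u}.
\]
Summing over $\b{a}\bmod q$ and $q\leq Q$ and extending $\boldsymbol{\beta}$ to all of $\R^R$ reproduces $\grS J_w$, with the tails contributing the first error term $\widetilde{\bfP}(P_{\max}/P_{\min})^R P_{\max}^{-Rd-1/2}$ via Davenport's singular-series bound $\sum_q q^{R-K}<\infty$ (valid since $K>R(R+1)(d-1)$) and via repeated integration by parts in $I_w$ made possible by the smoothness of $w$; the extra $P_{\max}^{-1/2}$ reflects the saving from the smooth cut-off over the classical sharp-box treatment. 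The main obstacle will be the simultaneous uniformity: one must control how $\|\bfg\|$, $\|\bfg^\natural\|$ and $P_{\max}/P_{\min}$ propagate through the iterated Weyl differencing and through the geometry-of-numbers step without ever overwhelming the basic Weyl gain $P_{\max}^{-K}$, and verify that~\eqref{cond:master} is the correct calibration making all three error terms meaningful.
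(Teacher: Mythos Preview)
Your overall architecture matches the paper: circle method, $(d-1)$-fold Weyl differencing on the minor arcs with coefficient tracking, and factorisation into singular series times singular integral on the major arcs. But one technical mechanism is misidentified, and getting it right is the main new content of the proof. You say the anisotropy of $\bfP$ forces the differencing parameters to be capped at $P_{\min}$, and that this is the source of the $(P_{\max}/P_{\min})^{K}$ loss. That is not what happens, and capping at $P_{\min}$ would be wasteful in the Cauchy--Schwarz step. The paper differences component-wise: the $j$th coordinate of each $\bfh^{(i)}$ runs over $|h_j^{(i)}|<2P_j$, as dictated by the support of $w(\cdot/P_j-z_j)$, and partial summation then gives the usual $\prod_j\min(P_j,\|\Phi(\bfh^{(1)},\ldots,\bfh^{(d-1)},\bfe_j)\|^{-1})$; the Schwartz decay of $\widehat w$ does not improve this step. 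The genuinely new ingredient is a lopsided version of Davenport's shrinking lemma (Lemma~\ref{lem3.3} in the paper): for symmetric linear forms $L_j=\sum_k\gamma_{jk}u_k$ and scalings $a_j>1$, the count $N(Z)$ of integer solutions to $|u_j|<a_jZ$, $\|L_j\|<a_j^{-1}Z$ still obeys $N(Z_2)/N(Z_1)\ll(Z_2/Z_1)^n$, because the associated $2n$-dimensional lattice is self-adjoint up to a coordinate permutation and signs. Only after this shrinking does the anisotropy bite: to count points on the Birch singular locus inside the shrunken lopsided region one dissects into cubes of side $P_{\max}^{\theta-1}P_{\min}$, and the number of such cubes is what produces the factor $(P_{\max}/P_{\min})^{n-\dim V^*}$, hence $(P_{\max}/P_{\min})^{K}$ after the $2^{d-1}$th root. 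Without the lopsided shrinking lemma the geometry-of-numbers step as you describe it does not go through.

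Two further points. The major-arc width must carry $P_{\min}^{-1}$ as well as a power of $P_{\max}$: the paper takes $|q\alpha_i-a_i|\leq \|\bfg^\natural\|^{R-1}P_{\min}^{-1}P_{\max}^{-(d-1)+R(d-1)\theta}$, because that is precisely what emerges from alternative~(ii) of the Weyl dichotomy; a width $QP_{\max}^{-d}$ depending on $P_{\max}$ alone will not interlock with the minor-arc bound in the anisotropic setting. And on the major arcs the paper uses high-order Euler--Maclaurin rather than Poisson summation, though for smooth compactly supported $w$ these are interchangeable; this is indeed the place where smoothness buys the arbitrary power saving $O_N(\widetilde{\bfP}P_{\max}^{-N})$, which after truncating the singular integral and series and balancing becomes the first error term $\widetilde{\bfP}(P_{\max}/P_{\min})^RP_{\max}^{-Rd-1/2}$.
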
 

Our sole
aim in this section is to establish Theorem~\ref{cond:master}.
All implied constants may depend on $n,R,d$ but not on the coefficients of the polynomials
$g_i(\bfy)$, $1\leq i\leq n$. We start by
introducing the exponential sum
$$S_w(\bfalp):= \sum_{\bfy\in \Z^n}\prod_{i=1}^n w\left(\frac{y_i}{P_i}-z_i\right)e(\bfalp \cdot \bfg (\bfy)),$$
where we use the vector notation $\bfalp\cdot\bfg(\bfy)=\sum_{i=1}^R \alp_i g_i(\bfy)$. By orthogonality we now have
$$N_w(\bfP)=\int_{[0,1]^R} S_w(\bfalp)\d\bfalp.$$
We shall follow Birch's approach \cite{Bir62} to approximate $N_w(\bfP)$. Our first step is to produce a Weyl type inequality for $S_w(\bfalp)$. 
Recall that $g_i^\natural(\bfy)$ are homogeneous polynomials of degree $d$, which can be written as
$$g_i^\natural(\bfy)= d! \sum_{1\leq j_1,\ldots, j_d\leq n}g_{j_1,\ldots, j_d}^{(i)} y_{j_1}\ldots y_{j_d},$$
with symmetric coefficients $g_{j_1,\ldots, j_d}$ (i.e. such that $g_{j_1,\ldots, j_d}=g_{\sig(j_1),\ldots, \sig(j_d)}$ for a permutation $\sig$ of the indices). We associate its multilinear
forms
$$\Phi_i(\bfy^{(1)},\ldots, \bfy^{(d)}) = d! \sum_{1\leq j_1,\ldots, j_d\leq n} g_{j_1,\ldots, j_d}^{(i)}y_{j_1}^{(1)}\ldots y_{j_d}^{(d)},$$
and set
$$\Phi (\bfy^{(1)},\ldots, \bfy^{(d)}):=\sum_{i=1}^R \alp_i \Phi_i(\bfy^{(1)},\ldots, \bfy^{(d)}).$$

\begin{lemma}
With the notation above we have
$$\frac{|S_w(\bfalp)|^{2^{d-1}}}{\widetilde{\b{P}}^{2^{d-1}}}\ll
\widetilde{\b{P}}^{-d}
\sum_{-2\bfP < \bfh^{(1)} < 2\bfP}\ldots \sum_{-2\bfP< \bfh^{(d-1)}< 2\bfP} \prod_{i=1}^n \min\left\{P_i, \Vert \Phi (\bfh^{(1)},\ldots, \bfh^{(d-1)},\bfe^{(i)})\Vert^{-1}\right\}$$

\end{lemma}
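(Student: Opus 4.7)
The plan is to adapt the classical Weyl differencing argument from Birch's paper to our smoothly weighted, lopsided setting. Write $S_w(\bfalp) = \sum_\bfy W(\bfy) e(\bfalp\cdot\bfg(\bfy))$ with $W(\bfy) = \prod_{i=1}^n w(y_i/P_i - z_i)$, and let me apply iterated Weyl differencing $d-1$ times. The standard squaring identity gives
\[
|S_w(\bfalp)|^2 = \sum_{\bfh^{(1)}} \sum_\bfy W(\bfy)W(\bfy+\bfh^{(1)}) \, e\!\left(\bfalp\cdot\Delta_{\bfh^{(1)}}\bfg(\bfy)\right),
\]
where $\bfh^{(1)}$ ranges over $-2\bfP<\bfh^{(1)}<2\bfP$ because of the support of $w$. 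Apply Cauchy--Schwarz to the outer sum (whose length is $\ll \widetilde{\b{P}}$) and iterate. After $k$ steps one inductively obtains, via the recursion $a_{k+1}=2a_k+k$ with $a_1=0$ solving to $a_k=2^k-k-1$, the bound
\[
|S_w(\bfalp)|^{2^k} \ll \widetilde{\b{P}}^{\,2^k-k-1}\sum_{\bfh^{(1)},\ldots,\bfh^{(k)}} \sum_\bfy W_k(\bfy;\bfh^{(1)},\ldots,\bfh^{(k)}) \, e\!\left(\bfalp\cdot\Delta_{\bfh^{(k)}}\cdots\Delta_{\bfh^{(1)}}\bfg(\bfy)\right),
\]
where $W_k$ is the product over $\bfeps\in\{0,1\}^k$ of $W(\bfy+\sum_j\eps_j\bfh^{(j)})$. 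Crucially $W_k$ factorises as $\prod_{i=1}^n W_{k,i}(y_i;h_i^{(1)},\ldots,h_i^{(k)})$ with each factor smooth, supported in an interval of length $\ll P_i$, and with derivative bounds uniform in the $h_i^{(j)}$.

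I then take $k=d-1$. Since each $\Delta_\bfh$ lowers the degree by one, the polynomial $\Delta^{(d-1)}\bfg$ becomes linear in $\bfy$: its leading part comes from $\bfg^\natural$ and equals $d!\,\Phi(\bfh^{(1)},\ldots,\bfh^{(d-1)},\bfy) = \sum_i y_i\cdot\Phi(\bfh^{(1)},\ldots,\bfh^{(d-1)},\bfe^{(i)})$, while the contribution of $\bfg-\bfg^\natural$ and the constants produced by the differencings depend only on $\bfh^{(1)},\ldots,\bfh^{(d-1)}$ and contribute only a phase of modulus one. Consequently the inner sum over $\bfy$ factorises as
\[
\prod_{i=1}^n \sum_{y_i\in\Z} W_{d-1,i}(y_i;\bfh) \, e\!\left(y_i\,\Phi(\bfh^{(1)},\ldots,\bfh^{(d-1)},\bfe^{(i)})\right).
\]

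For each one-dimensional sum I plan to apply the standard smooth-weight bound $|\sum_{y}\omega(y)e(y\beta)|\ll \min(P,\|\beta\|^{-1})$, which follows by the trivial bound on the one hand and by Poisson summation or repeated integration by parts (exploiting the uniform $C^1$ bounds on $W_{d-1,i}$ and its compact support of length $\asymp P_i$) on the other. Substituting this into the factorised product and combining with the exponent $\widetilde{\b{P}}^{\,2^{d-1}-d}$ from the Cauchy--Schwarz chain yields
\[
|S_w(\bfalp)|^{2^{d-1}} \ll \widetilde{\b{P}}^{\,2^{d-1}-d} \sum_{\bfh^{(1)},\ldots,\bfh^{(d-1)}} \prod_{i=1}^n \min\!\left\{P_i,\|\Phi(\bfh^{(1)},\ldots,\bfh^{(d-1)},\bfe^{(i)})\|^{-1}\right\}.
\]
Dividing through by $\widetilde{\b{P}}^{\,2^{d-1}}$ gives the claimed inequality.

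The main technical point to track carefully is the bookkeeping of the $\widetilde{\b{P}}$ factors across the $d-1$ Cauchy--Schwarz steps with lopsided boxes: at each step the outer number of tuples has size $\asymp\prod_i P_i$ rather than a pure power of a single $P$, so one must verify that the coordinate-wise supports of the translated weights $w(\cdot/P_i-z_i)$ allow the $\bfh^{(j)}$ ranges to remain $-2\bfP<\bfh^{(j)}<2\bfP$. The other delicate point is ensuring that the smoothness constants in the $\min(P_i,\|\beta\|^{-1})$ bound are independent of $\bfh$, which follows because each $W_{d-1,i}$ is a product of $2^{d-1}$ translates of the fixed compactly supported $w$ and its total variation is bounded by an absolute constant depending only on $w$ and $d$.
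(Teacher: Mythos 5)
Your proof is correct and follows essentially the same route as the paper: iterated Weyl differencing with Cauchy--Schwarz (the paper cites Lemma 3.3 of Browning--Prendiville and Lemma 2.1 of Birch for exactly this step), factorisation of the differenced weight over coordinates, and the one-dimensional bound $\min\{P_i,\Vert\cdot\Vert^{-1}\}$ via partial summation. The only additions are your explicit bookkeeping of the exponent recursion $a_{k+1}=2a_k+k$, which the paper leaves to the cited references.
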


\begin{proof}
For $w(x)$ a weight function and $h\in \R$ we introduce the notation
$$w_h(x)=w(x+h)w(x).$$
Moreover, for $h_1,\ldots, h_m\in \R$, we iteratively define
$$w_{h_1,\ldots, h_m}= w_{h_1,\ldots, h_{m-1}}(x+h_m)w_{h_1,\ldots, h_{m-1}}(x).$$
The same Weyl differencing process as in the proof of Lemma 3.3 (in particular equation (3.5)) in \cite{brpre} or in Lemma 2.1 in \cite{Bir62} leads to 
$$\frac{|S_w(\bfalp)|^{2^{d-1}}}{\widetilde{\b{P}}^{2^{d-1}}}\ll \widetilde{\b{P}}^{-d}\sum_{-2\bfP < \bfh^{(1)} < 2\bfP}\ldots \sum_{-2\bfP< \bfh^{(d-1)}< 2\bfP} |S_w(\bfh^{(1)},\ldots, \bfh^{(d-1)},\bfalp)|,$$
where 
\begin{equation*}
\begin{split}
S_w&(\bfh^{(1)},\ldots, \bfh^{(d-1)},\bfalp)=\\ &\sum_{\bfy\in \Z^n}\left\{ \prod_{i=1}^n w_{h_i^{(1)}/P_i,\ldots ,h_{i}^{(d-1)}/P_i}\left(\frac{y_i}{P_i}-z_i\right) \right\}e\left(\sum_{i=1}^R \alp_i \Phi_i(\bfh^{(1)},\ldots, \bfh^{(d-1)},\bfy) + c(\bfh^{(1)},\ldots, \bfh^{(d-1)})\right),
\end{split}
\end{equation*}
with integers $c(\bfh^{(1)},\ldots, \bfh^{(d-1)})$ independent of $\bfy$. Hence
$$|S_w(\bfh^{(1)},\ldots, \bfh^{(d-1)},\bfalp)|=|\sum_{\bfy\in \Z^n}\left\{ \prod_{i=1}^n w_{h_i^{(1)}/P_i,\ldots ,h_{i}^{(d-1)}/P_i}\left(\frac{y_i}{P_i}-z_i\right) \right\}e\left(\Phi (\bfh^{(1)},\ldots, \bfh^{(d-1)},\bfy) \right)|.$$
The estimate 
$$S_w(\bfh^{(1)},\ldots, \bfh^{(d-1)},\bfalp)\ll \prod_{i=1}^n \min\left\{P_i, \Vert \Phi (\bfh^{(1)},\ldots, \bfh^{(d-1)},\bfe^{(i)})\Vert^{-1} \right\}$$
can then be obtained via partial summation.
\end{proof}

We define the counting function
$$M(\bfalp,\bfP):=\sharp\{-2\bfP\leq \bfh^{(i)}\leq 2\bfP, 1\leq i\leq d-1: \Vert \Phi (\bfh^{(1)},\ldots, \bfh^{(d-1)},\bfe^{(j)})\Vert < P_j^{-1}\forall 1\leq j\leq n\}.$$

As Lemma 3.2 is deduced from Lemma 3.1 in \cite{Dav59} we obtain the following lemma. 

\begin{lemma}
One has
$$|S_w(\bfalp)|^{2^{d-1}}\ll \widetilde{\b{P}}^{2^{d-1}-d+1+\eps}M(\bfalp,\bfP).$$
\end{lemma}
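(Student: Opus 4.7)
The plan is to adapt the deduction of \cite[Lemma 3.2]{Dav59} from its \cite[Lemma 3.1]{Dav59} to the lopsided-box setting. In view of the previous lemma it suffices to establish
\[
\Sigma:=\sum_{-2\bfP<\bfh^{(1)},\ldots,\bfh^{(d-1)}<2\bfP}\ \prod_{i=1}^{n}\min\bigl\{P_i,\Vert\Phi(\bfh^{(1)},\ldots,\bfh^{(d-1)},\bfe^{(i)})\Vert^{-1}\bigr\}\ \ll\ \widetilde{\b{P}}^{\,1+\eps}\,M(\bfalp,\bfP),
\]
since multiplication by the prefactor $\widetilde{\b{P}}^{2^{d-1}-d}$ supplied by the previous lemma then yields exactly the claimed power of $\widetilde{\b{P}}$.

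First I would apply a dyadic decomposition to each of the $n$ values $\Vert\Phi(\bfh,\bfe^{(i)})\Vert$. For each index $i$ either $\Vert\Phi(\bfh,\bfe^{(i)})\Vert<P_i^{-1}$, in which case $\min\{P_i,\cdot\}=P_i$, or $\Vert\Phi(\bfh,\bfe^{(i)})\Vert\in[2^{-k_i-1},2^{-k_i})$ for a unique integer $0\leq k_i<\log_2(2P_i)$, in which case $\min\{P_i,\cdot\}\leq 2^{k_i+1}$. Splitting over these $2^n$ types and over the $O(\log P_{\max})^n\ll\widetilde{\b{P}}^{\,\eps}$ dyadic levels reduces the estimate to controlling, for each configuration $\bfT=(T_1,\ldots,T_n)$ with $T_i\in[P_i^{-1},1]$, the number of tuples in $\prod_{i=1}^{n}[-2P_i,2P_i]^{d-1}$ that satisfy $\Vert\Phi(\bfh,\bfe^{(i)})\Vert<T_i$ for every $i$.

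The second and central step is an anisotropic analogue of Davenport's shrinking lemma. Since $\Phi$ is separately linear in each of its first $d-1$ arguments, the standard differencing device --- if $\bfh_1,\bfh_2$ are solutions of $\Vert\Phi(\cdot,\bfe^{(i)})\Vert<T_i$ then $\bfh_1-\bfh_2$ is a solution of $\Vert\Phi(\cdot,\bfe^{(i)})\Vert<2T_i$ lying in a box twice as large --- can be iterated in each of the $d-1$ argument slots and coupled with a pigeonhole covering in each of the $n$ coordinate directions. This yields an upper bound of the shape $\prod_i(P_iT_i)^{d-1}M(\bfalp,\bfP)$ for the constrained count, which, when multiplied by the per-configuration weight $\prod_i\min\{P_i,T_i^{-1}\}$ and summed as a geometric series over the dyadic scales $T_i$, produces the required bound $\widetilde{\b{P}}^{\,1+\eps}\,M(\bfalp,\bfP)$.

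The main obstacle is the passage from Davenport's cubic-box shrinking lemma to the anisotropic one: Davenport works with a single rescaling parameter, whereas here the $n$ independent scales $P_1,\ldots,P_n$ must be tracked simultaneously, and the number of dyadic configurations grows with $n$. The saving observation is that $\Phi$ is linear in each coordinate direction separately, so the pigeonholing and differencing can be carried out direction by direction; no cross-coupling between coordinates appears, and the loss in the $i$-th coordinate is governed solely by the ratio $P_iT_i$. Once this coordinate-wise shrinking lemma is in place the remaining arithmetic is routine and the stated bound follows.
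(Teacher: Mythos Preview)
Your reduction to showing $\Sigma\ll\widetilde{\b{P}}^{\,1+\eps}M(\bfalp,\bfP)$ is correct and matches the paper. The gap is in the shrinking step. With your claimed bound $\prod_i(P_iT_i)^{d-1}M(\bfalp,\bfP)$ for the constrained count, the weighted sum over dyadic $T_i\in[P_i^{-1},1]$ becomes
\[
\Sigma\ \ll\ M(\bfalp,\bfP)\,\widetilde{\b{P}}^{\,d-1}\prod_{i=1}^{n}\sum_{T_i}T_i^{\,d-2},
\]
and for $d\geq3$ each inner geometric series is $O(1)$, so you only obtain $\Sigma\ll\widetilde{\b{P}}^{\,d-1}M(\bfalp,\bfP)$. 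This falls short of the required bound by a factor $\widetilde{\b{P}}^{\,d-2}$; your claim that the geometric series ``produces the required bound $\widetilde{\b{P}}^{\,1+\eps}M$'' is simply an arithmetic slip.

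The source of the wrong exponent is the plan to iterate the differencing ``in each of the $d-1$ argument slots''. Because $\Phi$ is \emph{multilinear} rather than linear in the full vector $(\bfh^{(1)},\ldots,\bfh^{(d-1)})$, differencing two solutions simultaneously in all slots does not control $\Phi$ of the difference: if $\bfh,\bfh'$ both satisfy $\|\Phi(\cdot,\bfe^{(i)})\|<T_i$, there is no bound on $\|\Phi(\bfh-\bfh',\bfe^{(i)})\|$ unless the difference is confined to a single slot. Davenport's deduction (which the paper invokes) therefore fixes $\bfh^{(1)},\ldots,\bfh^{(d-2)}$ and applies the sum-to-count estimate to the $n$ symmetric linear forms $h\mapsto\Phi(\bfh^{(1)},\ldots,\bfh^{(d-2)},h,\bfe^{(i)})$ in the \emph{single} vector variable $h=\bfh^{(d-1)}$; the lopsided shrinking lemma (Lemma~\ref{lem3.3}) then gives
\[
\sum_{|\bfh^{(d-1)}|<2\bfP}\ \prod_{i=1}^n\min\bigl(P_i,\|\Phi(\bfh^{(1)},\ldots,\bfh^{(d-1)},\bfe^{(i)})\|^{-1}\bigr)\ \ll\ \widetilde{\b{P}}^{\,1+\eps}\,M'(\bfh^{(1)},\ldots,\bfh^{(d-2)}),
\]
with $M'$ the corresponding count over $\bfh^{(d-1)}$ alone. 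Summing trivially over the remaining $\bfh^{(k)}$ recovers $M(\bfalp,\bfP)$ with exponent $1$ in place of your $d-1$, and the lemma follows.
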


Next we need a version of Lemma 12.6 in \cite{MR2152164} which is modified for lopsided boxes. 

\begin{lemma}\label{lem3.3}
Let $L_1,\ldots, L_n$ be symmetric linear forms given by $L_i=\gam_{i1} u_1+\ldots + \gam_{in} u_n$ for $1\leq i\leq n$, i.e. such that $\gam_{ij}=\gam_{ji}$ for $1\leq i,j\leq n$. Let $a_1,\ldots, a_n>1$ be real numbers. We denote by $N(Z)$ the number of integers solutions $u_1,\ldots, u_{2n}$ of the system of inequalities
$$|u_i|< a_iZ,\quad 1\leq i\leq n,\quad |L_i- u_{n+i}|<a_i^{-1}Z,\quad 1\leq i\leq n.$$
Then for $0< Z_1\leq Z_2\leq 1$ we have
$$\frac{N(Z_2)}{N(Z_1)}\ll\left(\frac{Z_2}{Z_1}\right)^{n}.$$
\end{lemma}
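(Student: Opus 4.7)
The plan is to reduce this lopsided version to the uniform-box setting of Davenport's Lemma 12.6 in \cite{MR2152164} by a diagonal rescaling that preserves the symmetry $\gamma_{ij}=\gamma_{ji}$, and then to apply (and briefly sketch the proof of) that lemma.

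First I would substitute $v_i=u_i/a_i$ for $1\leq i\leq n$ and $v_{n+i}=a_iu_{n+i}$ for $1\leq i\leq n$. The inequalities $|u_i|<a_iZ$ and $|L_i(\bfu)-u_{n+i}|<a_i^{-1}Z$ transform into
\[|v_i|<Z,\qquad|\widetilde L_i(\bfv)-v_{n+i}|<Z\qquad(1\leq i\leq n),\]
where $\widetilde L_i(\bfv)=\sum_j\widetilde\gamma_{ij}v_j$ with $\widetilde\gamma_{ij}:=a_ia_j\gamma_{ij}$. The crucial point is that the symmetry $\widetilde\gamma_{ij}=\widetilde\gamma_{ji}$ is preserved. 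The image of $\Z^{2n}$ under the substitution is a lattice $\Lambda\subset\R^{2n}$ of covolume $\prod_{i=1}^n(a_i^{-1}\cdot a_i)=1$, hence unimodular. Consequently $N(Z)$ equals the number of points of $\Lambda$ inside the symmetric convex body
\[\widetilde B(Z):=\{\bfv\in\R^{2n}:|v_i|<Z\text{ and }|\widetilde L_i(\bfv)-v_{n+i}|<Z\text{ for all }1\leq i\leq n\}.\]
This is exactly the setting of Davenport's Lemma 12.6, which extends from $\Z^{2n}$ to an arbitrary unimodular lattice without modification since the proof relies solely on general geometry-of-numbers input.

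I would then outline why Davenport's lemma holds. Let $0<\lambda_1\leq\cdots\leq\lambda_{2n}$ be the successive minima of $\Lambda$ with respect to $\widetilde B(1)$. By Minkowski's second theorem together with standard estimates for the number of lattice points in a dilate of a symmetric convex body,
\[N(Z)\asymp\prod_{i=1}^{2n}\max(1,Z/\lambda_i)\qquad\text{for }0<Z\leq 1,\]
and hence
\[\frac{N(Z_2)}{N(Z_1)}\ll\Big(\frac{Z_2}{Z_1}\Big)^{\#\{i\,:\,\lambda_i\leq Z_2\}}.\]
It therefore suffices to show that at most $n$ successive minima lie below some positive constant depending only on $n$. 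For $\bfv,\bfv'\in\widetilde B(Z)$ the symmetry of $\widetilde\gamma_{ij}$ gives $\sum_i\widetilde L_i(\bfv)v'_i=\sum_i\widetilde L_i(\bfv')v_i$, and substituting $\widetilde L_i(\bfv)=v_{n+i}+O(Z)$ yields
\[B(\bfv,\bfv'):=\sum_{i=1}^n\bigl(v_{n+i}v'_i-v_iv'_{n+i}\bigr)=O(Z^2).\]
A direct computation shows that on $\Lambda$ one has $B(\bfv,\bfv')=\sum_i(u_{n+i}u'_i-u_iu'_{n+i})\in\Z$, because the $a_i$ factors cancel perfectly in the pairing. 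Thus for $Z$ sufficiently small in terms of $n$ we must have $B(\bfv,\bfv')=0$. Since $B$ is a non-degenerate symplectic form on $\R^{2n}$, any totally isotropic subspace has dimension at most $n$, so no more than $n$ linearly independent lattice points of $\Lambda$ can lie in $\widetilde B(Z)$ for such $Z$. This yields $\lambda_{n+1}\gg_n 1$ and, combined with the minima-based count above, produces the claimed inequality $N(Z_2)/N(Z_1)\ll(Z_2/Z_1)^n$.

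The main obstacle is precisely the verification of integrality of the pairing $B(\bfv,\bfv')$ after rescaling. The cancellation $v_{n+i}v'_i=(a_iu_{n+i})(u'_i/a_i)=u_{n+i}u'_i$ justifies the specific choice of rescaling ratios $a_i$ and $a_i^{-1}$; any asymmetric choice would destroy this integrality and the reduction would fail. Once this cancellation is in place the remainder of the argument is a routine application of the symplectic geometry of numbers.
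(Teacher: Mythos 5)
Your proof is correct, and its overall strategy --- a diagonal rescaling $u_i\mapsto u_i/a_i$, $u_{n+i}\mapsto a_iu_{n+i}$ that normalises the box while preserving the symmetry $\gamma_{ij}=\gamma_{ji}$, followed by a successive-minima count in the spirit of Davenport's Lemma 12.6 --- is the same as the paper's. Where you genuinely diverge is in the one step that uses the symmetry, namely the proof that at most $n$ successive minima are small. The paper writes down the matrix of the rescaled lattice $\Lambda$, computes the adjoint lattice $M=(\Lambda^t)^{-1}$, and observes that symmetry makes $M$ a signed coordinate permutation of $\Lambda$, so the two have the same successive minima; Davenport's argument (via Mahler's relation $\lambda_r\mu_{2n+1-r}\asymp 1$ between a lattice and its adjoint) then yields $\lambda_r\lambda_{2n+1-r}\asymp 1$ and in particular $\lambda_{n+1}\gg 1$. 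You instead prove $\lambda_{n+1}\gg_n 1$ directly: symmetry gives $\sum_i\widetilde L_i(\mathbf v)v_i'=\sum_i\widetilde L_i(\mathbf v')v_i$, so the standard symplectic pairing is $O(Z^2)$ on $\widetilde B(Z)$, while the reciprocal scaling factors make it integer-valued on the lattice; hence for small $Z$ the lattice points of $\widetilde B(Z)$ span a totally isotropic, therefore at most $n$-dimensional, subspace. This is more self-contained (no adjoint computation, no appeal to Mahler) at the harmless cost of constants depending on $n$. One detail to make explicit at the end: since $Z_2\le 1$ may exceed the constant $c_n$ below which the isotropy argument applies, your exponent $\#\{i:\lambda_i\le Z_2\}$ could a priori be as large as $2n$; the indices with $c_n\le\lambda_i\le Z_2$ should be absorbed by noting that each corresponding factor $\max(1,Z_2/\lambda_i)/\max(1,Z_1/\lambda_i)\le 1/c_n=O_n(1)$, leaving only the at most $n$ indices with $\lambda_i<c_n$ to contribute a factor $Z_2/Z_1$.
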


\begin{proof}
Let $\Lam$ be the $2n$-dimensional lattice defined by
\begin{equation*}
\begin{split}
x_i&=a_i^{-1}u_i,\quad 1\leq i\leq n\\
x_{n+i}&= a_i(\gam_{i1} u_1+\ldots +\gam_{in}u_n+u_{n+i}),\quad 1\leq i\leq n.
\end{split}
\end{equation*}
As in the proof of Lemma 12.6 in \cite{MR2152164} we note that the inequalities describing $N(Z)$ are equivalent to
$$|x_i| < Z, \quad 1\leq i\leq 2n,$$
for a point $(x_1,\ldots, x_{2n})$ in the lattice $\Lam$. We identify the lattice $\Lam$ with its matrix
\begin{equation*}
\Lam=\left(\begin{array}{cccccc}a_1^{-1}&\ldots & 0 & 0 & \ldots & 0\\ \vdots & &\vdots&\vdots&&\vdots \\ 0 & \ldots & a_n^{-1} & 0& \ldots &0 \\a_1\gam_{11}&\ldots & a_1\gam_{1n} & a_1& \ldots & 0\\ \vdots & &\vdots&\vdots&&\vdots \\ a_n \gam_{n1} & \ldots & a_n \gam_{nn} & 0 & \ldots & a_n\end{array}\right)
\end{equation*}
and we find that the adjoint lattice is given by
\begin{equation*}
M=(\Lam^t)^{-1}=\left(\begin{array}{cccccc}a_1&\ldots & 0 & -a_1\gam_{11} & \ldots & -a_1\gam_{n1}\\ \vdots & &\vdots&\vdots&&\vdots \\ 0 & \ldots & a_n & -a_n\gam_{1n}& \ldots &-a_n\gam_{nn} \\ 0&\ldots & 0 & a_1^{-1}& \ldots & 0\\ \vdots & &\vdots&\vdots&&\vdots \\ 0 & \ldots & 0 & 0 & \ldots & a_n^{-1}\end{array}\right).
\end{equation*}
Since $\gam_{ij}=\gam_{ji}$ for all $1\leq i,j\leq n$ the two lattices $\Lam$ and $M$ can be transformed into one another by interchanging the order of $x_1,\ldots, x_{2n}$ and $u_1,\ldots, u_{2n}$ and changing signs at some variables. Hence they have the same successive minima. Now the proof of Lemma 12.6 in \cite{MR2152164} applies to our situation and 
an identical argument concludes our proof.
\end{proof}

We now apply Lemma \ref{lem3.3} to the counting function $M(\bfalp,\bfP)$. Let $0<\tet<1$ and set $Z=P_{\max}^{\tet-1}$. We then obtain the following bound,
$$|S_w(\bfalp)|^{2^{d-1}}\ll
 \frac{
\widetilde{\b{P}}
^{2^{d-1}-d+1+\eps}}{Z^{(d-1)n}}\sharp\calI,$$
where $\calI$ is defined by
$$
\Big\{
(\bfx^{(1)},\ldots, \bfx^{(d-1)})\in \Z^{(d-1)n}: |\bfx^{(i)}|\leq Z\bfP, \Vert \Phi(\bfx^{(1)},\ldots, \bfx^{(d-1)},\bfe_j)\Vert < Z^{d-1}P_j^{-1},\, \forall 1\leq j\leq n
\Big\}.$$
We are now in a position to obtain a form of Weyl's inequality for $S_w(\bfalp)$ (for a Weyl's inequality in a similar setting see for example Lemma 4.3 in \cite{Bir62}). Let $V^*$ be the affine variety defined by
$$\rank \left(\frac{\partial g_i^\natural(\bfx)}{\partial x_j}\right)_{1\leq i\leq R, 1\leq j\leq n}<R,$$
and recall that $$K=\frac{n-\dim V^*}{2^{d-1}}.$$

\begin{lemma}\label{lemWeyl}
Assume that $0<\tet <1$. Then one has either\\
(i) $$S_w(\bfalp) \ll \widetilde{\b{P}}^{1+\eps} \left(\frac{P_{\max}}{P_{\min}}\right)^{K}P_{\max}^{-\tet K},$$ or\\
(ii) there are integers $1\leq q\leq \Vert \bfg^\natural\Vert^RP_{\max}^{R(d-1)\tet}$, and $0\leq a_1,\ldots, a_R < q$ with $\gcd(\bfa,q)=1$ and
$$|q\alp_i-a_i|\leq \Vert \bfg^\natural\Vert^{R-1} P_{\min}^{-1}P_{\max}^{-(d-1)+R(d-1)\tet},\quad 1\leq i\leq R.$$
\end{lemma}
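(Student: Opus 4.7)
My plan is to carry out a version of Birch's Weyl inequality (compare Lemma 4.3 of \cite{Bir62}), adapted to handle both the lopsided box $\bfP$ and the uniformity in the coefficient height $\Vert \bfg^\natural\Vert$ that propagates into the denominator $q$ and the rational approximation.

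The first step is to combine the preceding bound $|S_w(\bfalp)|^{2^{d-1}}\ll \widetilde{\b{P}}^{2^{d-1}-d+1+\eps}Z^{-(d-1)n}\sharp\calI$ with $Z=P_{\max}^{\tet-1}$ with an iterated application of Lemma \ref{lem3.3}. Because the multilinear form $\Phi$ is symmetric and linear in each of the slots $\bfx^{(1)},\ldots,\bfx^{(d-1)}$, fixing all but one slot turns the defining conditions of $\calI$ into precisely the lopsided setup of Lemma \ref{lem3.3} with weights $a_j=P_j$, so that iterating over all $d-1$ slots yields a comparison $\sharp\calI(Z_1)\ll (Z_2/Z_1)^{(d-1)n}\sharp\calI(Z_2)$ for any $0<Z_1\leq Z_2\leq 1$. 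The exponent $(d-1)n$ here is what drives the factor $(P_{\max}/P_{\min})^{K}$ in case (i), since the lopsided shrinking amplifies the scale discrepancy across the slots.

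The second step is to invoke Birch's dichotomy. I would choose a shrunken scale $Z_0$ with $Z_0^{d-1}P_{\max}^{-1}$ strictly smaller than the smallest nonzero integer value attainable by $\Phi(\bfx^{(1)},\ldots,\bfx^{(d-1)},\bfe_j)$, which is controlled by $\Vert \bfg^\natural\Vert$; explicitly, $Z_0 \asymp (\Vert\bfg^\natural\Vert P_{\max})^{-1/(d-1)}$ scaled appropriately. Every $(d-1)$-tuple lying in $\calI(Z_0)$ then satisfies the exact equalities $\Phi(\bfx^{(1)},\ldots,\bfx^{(d-1)},\bfe_j)=0$ for all $j$, hence after specializing $\bfx^{(1)}=\cdots=\bfx^{(d-1)}=\bfh$ lies on the Birch variety $V^*$; the standard dimension count then bounds $\sharp\calI(Z_0)\ll(Z_0 P_{\max})^{\dim V^*}$. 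Combining with step one and unwinding gives
\[
|S_w(\bfalp)|^{2^{d-1}}\ll \widetilde{\b{P}}^{2^{d-1}+\eps}\Big(\tfrac{P_{\max}}{P_{\min}}\Big)^{2^{d-1}K}P_{\max}^{-2^{d-1}K\tet},
\]
which is the $2^{d-1}$-th power of alternative (i).

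The third step handles the opposite situation, where the number of integer tuples in $\calI$ is larger than the bound just obtained would permit: then $\calI$ contains tuples $\bfh$ in a very small box on which the multilinear relations $\Vert\Phi(\bfh,\ldots,\bfh,\bfe_j)\Vert$ are very small, so the numbers $q\alp_i$ are close to integers for a common denominator $q$ extracted by Cramer's rule from an $R\times R$ sub-determinant of the gradient matrix at $\bfh$. The size of this sub-determinant is $\ll \Vert\bfg^\natural\Vert^{R}P_{\max}^{R(d-1)\tet}$, which yields the upper bound on $q$ in (ii), while the residual approximation error is controlled by $\Vert\bfg^\natural\Vert^{R-1}P_{\min}^{-1}P_{\max}^{-(d-1)+R(d-1)\tet}$, where the $P_{\min}^{-1}$ comes from the lopsided box in the step-one iteration and the $\Vert\bfg^\natural\Vert^{R-1}$ from expanding a cofactor.

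I expect the main obstacle to be the careful bookkeeping of the $\Vert\bfg^\natural\Vert$ and $(P_{\max}/P_{\min})$ factors through the iterated geometry-of-numbers and the Cramer's rule extraction, since these are trivial in Birch's original setting where the coefficients and the box are fixed uniformly, but here both enter explicitly through the scale $Z_0$ and through the coefficients of the gradient linear forms.
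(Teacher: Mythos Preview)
Your Step 3 (the Cramer's rule extraction in case (ii)) is essentially what the paper does and is fine. The gap is in Step 2.

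The quantity $\Phi(\bfx^{(1)},\ldots,\bfx^{(d-1)},\bfe_j)=\sum_{i=1}^R\alp_i\Phi_i(\bfx^{(1)},\ldots,\bfx^{(d-1)},\bfe_j)$ is a real number, not an integer; only the individual $\Phi_i$ are integers. Consequently no amount of shrinking the scale $Z_0$ can force the exact equality $\Phi=0$: the membership condition for $\calI(Z_0)$ only says that $\Phi$ is close to \emph{some} integer, which need not be zero. Your chain ``$\Phi=0$, then specialise to the diagonal, then land on $V^*$, then $\sharp\calI(Z_0)\ll(Z_0P_{\max})^{\dim V^*}$'' therefore does not get off the ground. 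Even granting $\Phi=0$, this would only say that $\bfalp$ lies in the left kernel of $(\Phi_i(\ldots,\bfe_j))_{i,j}$, not that the rank of that integer matrix is $<R$; and $\sharp\calI$ counts $(d-1)$-tuples in a $(d-1)n$-dimensional box, so $\dim V^*$ cannot be the correct exponent.

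The paper's dichotomy is rank-based rather than vanishing-based: either \emph{some} tuple in $\calI$ has the integer matrix $(\Phi_i(\bfx^{(1)},\ldots,\bfx^{(d-1)},\bfe_j))_{i,j}$ of full rank $R$, in which case Cramer's rule gives (ii) exactly as you describe; or \emph{every} tuple in $\calI$ lies on the auxiliary variety $\calY\subset\A^{(d-1)n}$ defined by $\mathrm{rank}<R$, and one must bound $\sharp(\calI\cap\calY)$ directly at the original scale $Z=P_{\max}^{\tet-1}$ with no further shrinking. For this the paper dissects the lopsided box $\prod_i[-P_{\max}^{\tet-1}P_i,P_{\max}^{\tet-1}P_i]^{d-1}$ into roughly $(\widetilde{\bfP}/P_{\min}^{n})^{d-1}$ cubes of common side $P_{\max}^{\tet-1}P_{\min}$, and on each cube applies a lattice-point bound for affine varieties (Theorem~3.1 of \cite{MR2559866}) that is \emph{uniform in the coefficients} of the defining equations, hence uniform over all translates. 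Combined with Birch's inequality $\dim\calY\le\dim V^*+(d-2)n$ this yields (i). This dissection-into-cubes step, together with the coefficient-uniform lattice-point input, is exactly what manufactures the factor $(P_{\max}/P_{\min})^K$ and absorbs the lopsidedness; it is the main point you are missing.
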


\begin{proof}
First assume that $P_{\max}^{\tet -1}P_{\min} \geq 1$.
We start with the bound
$$|S_w(\bfalp)|^{2^{d-1}}\ll 
\widetilde{\b{P}}^{2^{d-1}-d+1+\eps}P_{\max}^{(1-\tet)(d-1)n}\sharp\calI.$$
Consider the affine variety $\calY \subset \A^{n(d-1)}$ given by
$$\calY: \rank (\Phi_i(\bfx^{(1)},\ldots, \bfx^{(d-1)},\bfe_j))_{1\leq i\leq R, 1\leq j\leq n}<R.$$
We set
$$\calE:=\{(\bfx^{(1)},\ldots, \bfx^{(d-1)})\in \Z^{n(d-1)}\cap \calY: |\bfx^{(i)}|\leq P_{\max}^{\tet-1}\bfP,\, \forall 1\leq i\leq d-1\}.$$
Now we distinguish two cases.\vspace{0.5cm}\\
(i) Assume that $\calI\subset \calE$. Then we bound the cardinality of $\calE$ by dimension bounds. We dissect the region given by the conditions that $|\bfx^{(i)}|\leq P_{\max}^{\tet-1}\bfP$ into boxes where all the side length are equal (at the boundaries we allow for overlapping boxes which will result in slight overcounting) and of size $P_{\max}^{\tet -1}P_{\min}$. The number of such boxes is bounded by
$$\ll \left(\prod_{i=1}^n \frac{P_i}{P_{\min}}\right)^{d-1}.$$
On each of the boxes we apply a linear transformation to move the box to the origin. Then we apply Theorem 3.1 in \cite{MR2559866}. Note that this bound is independent of the coefficients of the variety (only depending on the dimension and degree) and hence uniform in the shift. We obtain
$$\sharp\calE \ll \left(\prod_{i=1}^n \frac{P_i}{P_{\min}}\right)^{d-1} (P_{\max}^{\tet -1}P_{\min})^{\dim \calY}.$$
By~\cite[Lem.3.3]{Bir62} we have
$\dim \calY \leq \dim V^* + (d-2)n$,
hence we obtain the bound
$$\sharp\calE \ll \left(\prod_{i=1}^n \frac{P_i}{P_{\min}}\right)^{d-1} (P_{\max}^{\tet -1}P_{\min})^{\dim V^*+(d-2)n}.$$
Together with our assumption $\calI\subset \calE$ we obtain
\begin{equation*}
\begin{split}
|S_w(\bfalp)|^{2^{d-1}}& \ll
\widetilde{\b{P}}^{2^{d-1}+\eps}P_{\max}^{(\tet-1)(\dim V^*+(d-2)n)}P_{\max}^{-(d-1)n(\tet-1)}P_{\min}^{-n+\dim V^*} \\
& \ll \widetilde{\b{P}}^{2^{d-1}+\eps} P_{\max}^{(1-\tet)(n-\dim V^*)}P_{\min}^{-n+\dim V^*} \\
&\ll \widetilde{\b{P}}^{2^{d-1}+\eps}  P_{\max}^{-\tet (n-\dim V^*)}\left(\frac{P_{\max}}{P_{\min}}\right)^{n-\dim V^*}.
\end{split}
\end{equation*}
This estimate gives option (i) in the statement of our lemma.\vspace{0.3cm}\\
Next we assume that $\calI \setminus  \calE\neq \emptyset$. Let $(\bfx^{(1)},\ldots, \bfx^{(d-1)})$ be such a point in the difference set, i.e. 
$$\rank (\Phi_i(\bfx^{(1)},\ldots, \bfx^{(d-1)},\bfe_j))_{1\leq i\leq R, 1\leq j\leq n}=R.$$
With no loss of generality
we assume that the leading $R\times R$ minor is of full rank, and set
$$q:=
\big|
\det(\Phi_i(\bfx^{(1)},\ldots, \bfx^{(d-1)},\bfe_j))_{1\leq i,j\leq R}
\big|
.$$
Note that
$$q\ll \Vert \bfg^\natural\Vert^R P_{\max}^{R(d-1)\tet}.$$
Moreover, we have the system of equations
$$\sum_{i=1}^R \alp_i \Phi_i(\bfx^{(1)},\ldots, \bfx^{(d-1)},\bfe_j) = \widetilde{a}_j+\del_j, \quad 1\leq j\leq R,$$
with $\widetilde{a}_1,\ldots, \widetilde{a}_R$ integers and 
$$|\del_j|\ll P_{\max}^{(\tet-1)(d-1)}P_j^{-1},\quad 1\leq j\leq n.$$
We now obtain (after changing $\tet$ by $\eps$ for $\eps$ arbitrarily small) as in the proof of~\cite[Lem.2.5]{Bir62} an approximation $1\leq a_1,\ldots, a_R\leq q$ to the real numbers $\alp_i$ of the quality
$$|q\alp_i-a_i|\leq  \Vert \bfg^\natural\Vert^{R-1} P_{\min}^{-1}P_{\max}^{-(d-1)+R(d-1)\tet},\quad 1\leq i\leq R.$$
Note that alternative (i) in Lemma \ref{lemWeyl} trivially holds if $P_{\max}^{\tet-1}P_{\min} \leq 1$.
\end{proof}

Next we come to the definition of the major arcs. Let $0<\tet<1$ and assume that
\begin{equation}\label{eqntet}
P_{\max}^{\tet-1}P_{\min}\geq 1.
\end{equation}
For $q\in \N$ and $1\leq a_1,\ldots, a_R\leq q$ we define the major arc
$$\grM_{\bfa,q}(\tet):=\{\bfalp\in [0,1]^R: |q\alp_i-a_i|\leq \Vert \bfg^\natural\Vert^{R-1} P_{\min}^{-1}P_{\max}^{-(d-1)+R(d-1)\tet},\, 1\leq i\leq R\}.$$
Moreover we define the major arcs $\grM(\tet)$ as the union
$$\grM(\tet)=\bigcup_{1\leq q\leq \Vert\bfg^{\natural}\Vert^R P_{\max}^{R(d-1)\tet}}\bigcup_{\substack{1\leq a_1,\ldots, a_R\leq q\\ \gcd(\bfa,q)=1}}\grM_{\bfa,q}(\tet)$$
and
set $\grm(\tet):=[0,1]^R\setminus \grM(\tet)$.

A short calculation gives the following bound for the measure of the major arcs $\grM(\tet)$.

\begin{lemma}\label{majarc}
Assume that $0<\tet<1$ such that (\ref{eqntet}) holds. Then one has
$$\meas(\grM(\tet))\ll \Vert \bfg^\natural\Vert^{R^2} P_{\min}^{-R}P_{\max}^{-R(d-1)+R(R+1)(d-1)\tet}.$$
\end{lemma}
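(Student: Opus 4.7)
The plan is to obtain the stated measure bound by a direct union-bound calculation over the individual boxes comprising $\grM(\theta)$, without attempting to exploit any disjointness among the arcs.

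First I would estimate the measure of a single arc $\grM_{\bfa,q}(\theta)$. By the defining inequality $|q\alpha_i - a_i|\leq \Vert\bfg^\natural\Vert^{R-1}P_{\min}^{-1}P_{\max}^{-(d-1)+R(d-1)\theta}$, each coordinate $\alpha_i$ is constrained to lie in an interval of length at most
\[
2q^{-1}\Vert\bfg^\natural\Vert^{R-1}P_{\min}^{-1}P_{\max}^{-(d-1)+R(d-1)\theta}.
\]
Taking the product over the $R$ coordinates yields
\[
\meas(\grM_{\bfa,q}(\theta))\ll q^{-R}\Vert\bfg^\natural\Vert^{R(R-1)}P_{\min}^{-R}P_{\max}^{-R(d-1)+R^2(d-1)\theta}.
\]

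Next I would sum over $\bfa$ and $q$. For each fixed $q$, the number of admissible tuples $\bfa$ with $1\leq a_i\leq q$ is at most $q^R$, and this is the only bound we need (the coprimality condition only helps). Hence the aggregate measure of the arcs with denominator $q$ is
\[
\ll \Vert\bfg^\natural\Vert^{R(R-1)}P_{\min}^{-R}P_{\max}^{-R(d-1)+R^2(d-1)\theta}.
\]
Summing this over the range $1\leq q\leq \Vert\bfg^\natural\Vert^R P_{\max}^{R(d-1)\theta}$ contributes a multiplicative factor of size $\Vert\bfg^\natural\Vert^R P_{\max}^{R(d-1)\theta}$, producing
\[
\meas(\grM(\theta))\ll \Vert\bfg^\natural\Vert^{R^2}P_{\min}^{-R}P_{\max}^{-R(d-1)+R^2(d-1)\theta+R(d-1)\theta}
=\Vert\bfg^\natural\Vert^{R^2}P_{\min}^{-R}P_{\max}^{-R(d-1)+R(R+1)(d-1)\theta},
\]
which is exactly the claimed estimate.

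There is no serious obstacle here: everything reduces to measuring a product of intervals and multiplying by the crude count $\sum_{q\leq Q}q^R\ll Q^{R+1}$ with $Q=\Vert\bfg^\natural\Vert^R P_{\max}^{R(d-1)\theta}$. The assumption $P_{\max}^{\theta-1}P_{\min}\geq 1$ of the lemma is not needed for the measure computation itself; it is recorded only because it is the hypothesis under which the major arcs $\grM(\theta)$ are defined meaningfully (guaranteeing the radii are at most $1$ and the dissection inherits from Lemma~\ref{lemWeyl}). If I wanted a cleaner bound avoiding the overcount from overlapping arcs, I would additionally verify that distinct Farey fractions $a/q, a'/q'$ with $q,q'\leq Q$ satisfy $|a/q-a'/q'|\geq 1/(qq')$ and that the radius is small enough relative to $Q^{-2}$; but this refinement is unnecessary for the stated $\ll$-bound, so I would simply rely on the union bound above.
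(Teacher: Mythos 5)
Your proof is correct and is precisely the "short calculation" the paper alludes to (the paper omits the details entirely): bound each arc by a product of $R$ intervals of length $\ll q^{-1}\Vert\bfg^\natural\Vert^{R-1}P_{\min}^{-1}P_{\max}^{-(d-1)+R(d-1)\theta}$, multiply by the $\leq q^R$ choices of $\bfa$, and sum the resulting $q$-independent bound over $q\leq \Vert\bfg^\natural\Vert^R P_{\max}^{R(d-1)\theta}$. Your displayed computation carries this out correctly (though your closing aside quoting $\sum_{q\leq Q}q^R\ll Q^{R+1}$ is not quite the bookkeeping you actually used, since the per-arc measure carries the compensating factor $q^{-R}$), and your remark that \eqref{eqntet} plays no role in the measure estimate itself is accurate.
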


We are now ready to provide an $L^1$-bound for the exponential sum $S_{w}(\bfalp)$ over the minor arcs, which is a modification of Lemma 4.4 in \cite{Bir62} and proved in the very same way.

\begin{lemma}\label{lem3.7}
Let $0<\tet<1$ such that (\ref{eqntet}) holds. Assume that
$$K>R(R+1)(d-1).$$
Then one has
$$\int_{\grm(\tet)}|S_{w}(\bfalp)|\d\bfalp\ll 
\widetilde{\b{P}}^{1+\eps} \left(\frac{P_{\max}}{P_{\min}}\right)^{K}P_{\max}^{- K}+\widetilde{P}^{1+\eps} \Vert\bfg^\natural\Vert^{R^2} \left(\frac{P_{\max}}{P_{\min}}\right)^{R+K}P_{\max}^{-Rd -(K-R(R+1)(d-1))\tet +\eps},$$
for $\eps> 0$ arbitrarily small. 
\end{lemma}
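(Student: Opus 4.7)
The plan is to combine the Weyl-type pointwise bound of Lemma~\ref{lemWeyl} with the measure estimate of Lemma~\ref{majarc} via a dyadic dissection of the minor arcs according to the parameter at which $\bfalp$ first enters a major arc, following the template of Birch's Lemma~4.4 in~\cite{Bir62}.

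For each $\bfalp \in \grm(\tet)$ I would introduce
$$\tau^*(\bfalp) := \inf\{\tau\in[\tet,1] : \bfalp \in \grM(\tau)\},$$
with the convention $\tau^*(\bfalp):=1$ when this set is empty; note that $\grM(\tau)$ is non-decreasing in $\tau$ by inspection of its definition. For every $\tau < \tau^*(\bfalp)$ alternative (ii) of Lemma~\ref{lemWeyl} must fail, so alternative (i) furnishes
$$|S_w(\bfalp)| \ll \widetilde{\b{P}}^{1+\eps}(P_{\max}/P_{\min})^K P_{\max}^{-\tau K}.$$
I then dyadically dissect: put $\tau_k := (1+\eps)^k\tet$ for $k=0,1,\ldots,T$ where $T\ll \eps^{-1}\log P_{\max}$, and define
$$\grm_k := \{\bfalp\in\grm(\tet): \tau^*(\bfalp)\in[\tau_k,\tau_{k+1})\}, \qquad \grm_\infty := \{\bfalp\in\grm(\tet): \tau^*(\bfalp)=1\}.$$
On $\grm_\infty$ the pointwise bound with $\tau=1$ applies and $\meas(\grm_\infty)\leq 1$, producing the first summand of the target inequality.

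On each $\grm_k$, since $\bfalp\in\grM(\tau_{k+1})$, Lemma~\ref{majarc} gives
$$\meas(\grm_k)\ll\Vert\bfg^\natural\Vert^{R^2}P_{\min}^{-R}P_{\max}^{-R(d-1)+R(R+1)(d-1)\tau_{k+1}},$$
while the pointwise bound reads $|S_w(\bfalp)|\ll\widetilde{\b{P}}^{1+\eps}(P_{\max}/P_{\min})^K P_{\max}^{-\tau_k K+\eps}$. Regrouping the powers via the identity $(P_{\max}/P_{\min})^K\, P_{\min}^{-R}\, P_{\max}^{-R(d-1)} = (P_{\max}/P_{\min})^{R+K}\,P_{\max}^{-Rd}$ and substituting $\tau_{k+1}=(1+\eps)\tau_k$, the contribution from $\grm_k$ takes the form
$$\widetilde{\b{P}}^{1+\eps}\Vert\bfg^\natural\Vert^{R^2}(P_{\max}/P_{\min})^{R+K}P_{\max}^{-Rd-\tau_k(K-R(R+1)(d-1))+O(\eps)}.$$
The hypothesis $K>R(R+1)(d-1)$ makes the coefficient of $\tau_k$ strictly negative for $\eps$ small enough, hence the worst case is the smallest index $\tau_0=\tet$, which exactly reproduces the second summand of the statement.

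Summing the $T\ll\eps^{-1}\log P_{\max}$ dyadic pieces contributes only a logarithmic factor, readily absorbed into the existing $P_{\max}^{\eps}$ allowance. The main technical delicacy is the choice of geometric ratio $1+\eps$ in place of a cruder $2$: this is essential to ensure that the transition from the pointwise exponent evaluated at $\tau_k$ to the measure exponent evaluated at $\tau_{k+1}$ costs at most $P_{\max}^{O(\eps)}$, which matters precisely because the extremal contribution occurs at the very endpoint $\tau=\tet$ rather than in an interior regime.
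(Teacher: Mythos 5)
Your argument is correct and is essentially the paper's intended proof: the paper proves this lemma by deferring to Birch's Lemma 4.4, whose mechanism is exactly your dissection of $\grm(\tet)$ by the first level $\tau$ at which $\bfalp$ enters $\grM(\tau)$, pairing the Weyl bound of Lemma~\ref{lemWeyl} at the lower endpoint with the measure bound of Lemma~\ref{majarc} at the upper endpoint, and using $K>R(R+1)(d-1)$ to see that the extremal contribution occurs at $\tau=\tet$. The only cosmetic difference is that an arithmetic step $\tau_{k+1}=\tau_k+\eps$ (as in Birch) gives the same $P_{\max}^{O(\eps)}$ loss with only $O(\eps^{-1})$ pieces, so the geometric ratio is not actually needed.
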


For technical convenience we introduce the slightly larger major arcs
$$\grM'_{\bfa,q}(\tet):=\{\bfalp\in [0,1]^R: |q\alp_i-a_i|\leq q\Vert \bfg^\natural\Vert^{R-1} P_{\min}^{-1}P_{\max}^{-(d-1)+R(d-1)\tet},\, 1\leq i\leq R\},$$
and
$$\grM'(\tet)=\bigcup_{1\leq q\leq \Vert\bfg^{\natural}\Vert^R P_{\max}^{R(d-1)\tet}}\bigcup_{\substack{1\leq a_1,\ldots, a_R\leq q\\ \gcd(\bfa,q)=1}}\grM'_{\bfa,q}(\tet).$$

We record that the major arcs $\grM'_{\bfa,q}(\tet)$ are disjoint for $\tet$ small enough and that
$$\meas(\grM'(\tet))\ll \Vert\bfg^\natural\Vert^{2R^2}P_{\min}^{-R}P_{\max}^{-R(d-1)+(2R^2+R)(d-1)\tet}.$$

A minor modification of the proof of Lemma 4.1 in \cite{Bir62} gives the following result.

\begin{lemma}\label{lemmajdis}
Assume that
\begin{equation}\label{majdis}
\Vert \bfg^\natural\Vert^{3R-1} P_{\min}^{-1}P_{\max}^{-(d-1)+3R(d-1)\tet} <1.
\end{equation}
Then for $1\leq q\leq \Vert\bfg^{\natural}\Vert^R P_{\max}^{R(d-1)\tet}$ and $1\leq a_1,\ldots, a_R\leq q$, $\gcd(\bfa,q)=1$ the major arcs $\grM'_{\bfa,q}(\tet)$ are disjoint.
\end{lemma}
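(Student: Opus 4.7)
The plan is to run the classical Farey-arc disjointness argument uniformly in $\Vert \bfg^\natural\Vert$, closely paralleling the proof of~\cite[Lem.~4.1]{Bir62}. I would argue by contradiction: suppose there exist two admissible pairs $(\bfa,q)\neq (\bfa',q')$ (both with coprime data, both with modulus at most $\Vert\bfg^\natural\Vert^R P_{\max}^{R(d-1)\tet}$) and a common point $\bfalp\in \grM'_{\bfa,q}(\tet)\cap \grM'_{\bfa',q'}(\tet)$. Then some coordinate $i$ must exhibit $a_i/q\neq a_i'/q'$, and the plan is to exploit the elementary separation bound $|a_i/q-a_i'/q'|\geq (qq')^{-1}$ against the upper bound for the same quantity coming from the triangle inequality.

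Unfolding the definition of $\grM'_{\bfa,q}(\tet)$ yields $|\alp_i-a_i/q|\leq \Vert\bfg^\natural\Vert^{R-1}P_{\min}^{-1}P_{\max}^{-(d-1)+R(d-1)\tet}$ (and likewise for the primed pair), so that
\[
\frac{1}{qq'}\leq \left|\frac{a_i}{q}-\frac{a_i'}{q'}\right|\leq 2\Vert\bfg^\natural\Vert^{R-1}P_{\min}^{-1}P_{\max}^{-(d-1)+R(d-1)\tet}.
\]
Cross-multiplying and inserting the crude bound $qq'\leq \Vert\bfg^\natural\Vert^{2R}P_{\max}^{2R(d-1)\tet}$ then gives
\[
1\leq 2\Vert\bfg^\natural\Vert^{3R-1}P_{\min}^{-1}P_{\max}^{-(d-1)+3R(d-1)\tet},
\]
in direct contradiction with hypothesis~(\ref{majdis}). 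The factor $2$ is harmless here: in all intended applications the quantity in~(\ref{majdis}) is actually $P_{\max}^{-c}$ for some $c>0$, so it may be absorbed, or one may equivalently strengthen~(\ref{majdis}) to $<1/2$ without material loss.

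The proof is essentially bookkeeping and I expect no genuine obstacle. The only care required is to keep the $\Vert\bfg^\natural\Vert$ dependence consistent throughout: the factor $\Vert\bfg^\natural\Vert^{R-1}$ arises from the radius of each major arc (already present in the definition of $\grM'_{\bfa,q}(\tet)$), while $\Vert\bfg^\natural\Vert^{2R}$ comes from the product bound on $qq'$, together producing the exponent $3R-1$ appearing in~(\ref{majdis}). This bookkeeping is precisely the point at which our lopsided and uniform-in-coefficients setting diverges from~\cite{Bir62}, but the underlying combinatorial mechanism is identical.
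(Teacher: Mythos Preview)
Your argument is correct and is exactly the minor modification of~\cite[Lem.~4.1]{Bir62} that the paper invokes without spelling out; the separation bound $|a_i/q-a_i'/q'|\geq (qq')^{-1}$ together with the triangle inequality and the bound $qq'\leq \Vert\bfg^\natural\Vert^{2R}P_{\max}^{2R(d-1)\tet}$ is the whole content. Your remark on the stray factor of~$2$ is also well taken and is the standard way this is handled in the literature.
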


We now come to the major arc approximation of $S_w(\bfalp)$. Let $q\in \N$ and $1\leq a_1,\ldots, a_R\leq q$. We define the exponential sum
$$S_{\bfa,q}:=\sum_{\bfy \mod q}e\left(\frac{\bfa}{q}\cdot\bfg(\bfy)\right)$$
and the integral
$$I_w(\bfgam):=\int_{\R^n} e(\bfgam\cdot\bfg(\bfu)) \prod_{i=1}^n w \left(\frac{u_i}{P_i}-z_i\right)\d\bfu.$$

\begin{lemma}\label{lemmajapprox}
Let $q\in \N$ and $0\leq a_1,\ldots, a_R <q$. Write $\bfalp=\bfa/q+\bfbet$. Assume that $q<P_{\min}P_{\max}^{-\vareps}$ and  
$$|\bfbet|
q P_{\max}^{d-1}\Vert \bfg\Vert < P_{\max}^{-\vareps}.$$
Then one has the following approximation
for any real $N \geq 1$,
$$S_w(\bfalp)=q^{-n}S_{\bfa,q}I_w(\bfbet) + O_{N}(\widetilde{\b{P}}P_{\max}^{-N}).$$
\end{lemma}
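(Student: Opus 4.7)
The plan is to follow the standard major arc approximation strategy, but with the crucial twist that the smooth compactly supported weight $w$ allows us to trade the usual polynomial-saving error for arbitrarily fast decay.

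First, I would decompose each summation variable as $y_i = qu_i + r_i$ with $0\leq r_i<q$ and $u_i\in\Z$. Since $\bfg(q\bfu+\bfr)\equiv \bfg(\bfr)\pmod q$, writing $\bfalp=\bfa/q+\bfbet$ yields the factorisation
\[
S_w(\bfalp)=\sum_{\bfr\bmod q} e\!\left(\tfrac{\bfa}{q}\cdot\bfg(\bfr)\right)\,T(\bfr),
\qquad
T(\bfr):=\sum_{\bfu\in\Z^n} F_\bfr(\bfu),
\]
where
\[
F_\bfr(\bfu)\;:=\;e\bigl(\bfbet\cdot\bfg(q\bfu+\bfr)\bigr)\prod_{i=1}^n w\!\left(\tfrac{qu_i+r_i}{P_i}-z_i\right).
\]

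Next I apply Poisson summation in $\bfu$ to $T(\bfr)$, so that $T(\bfr)=\sum_{\bfh\in\Z^n}\widehat{F_\bfr}(\bfh)$. The $\bfh=\bfzero$ term is an integral which, after the substitution $v_i=(qu_i+r_i)/P_i-z_i$ (sending $d\bfu\mapsto q^{-n}\widetilde{\bfP}\,d\bfv$), becomes $q^{-n}I_w(\bfbet)$, and is independent of $\bfr$; summing over residues $\bfr\bmod q$ produces the main term $q^{-n}S_{\bfa,q}I_w(\bfbet)$. The key task is to bound the non-zero frequency contribution $\sum_{\bfh\neq\bfzero}\sum_{\bfr}\widehat{F_\bfr}(\bfh)$ by $O_N(\widetilde{\bfP}\,P_{\max}^{-N})$.

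To this end I estimate each $\widehat{F_\bfr}(\bfh)$ by iterated integration by parts in the coordinate $u_j$ with the largest $|h_j|$. Each integration produces a factor $1/(2\pi i h_j)$ and differentiates $F_\bfr$ once. Two types of derivative can fall: a derivative of $w((qu_i+r_i)/P_i-z_i)$ produces $q/P_i\leq q/P_{\min}<P_{\max}^{-\vareps}$ by the hypothesis $q<P_{\min}P_{\max}^{-\vareps}$; and a derivative of the exponential $e(\bfbet\cdot\bfg(q\bfu+\bfr))$ produces at most $O(|\bfbet|\,q\,P_{\max}^{d-1}\Vert\bfg\Vert)<P_{\max}^{-\vareps}$ by the second hypothesis. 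Using Leibniz to expand repeated derivatives, one therefore gains a factor $P_{\max}^{-\vareps}$ per integration by parts, while the support of $F_\bfr$ has volume $O(\widetilde{\bfP}/q^n)$. Taking $\lceil N/\vareps\rceil$ iterations (plus one more in each variable to gain absolute convergence of the sum over $\bfh$) gives
\[
\sum_{\bfh\neq\bfzero}|\widehat{F_\bfr}(\bfh)|\;\ll_N\;\widetilde{\bfP}\,q^{-n}\,P_{\max}^{-N},
\]
and multiplying by the $q^n$ residues $\bfr$ yields the claimed error $O_N(\widetilde{\bfP}\,P_{\max}^{-N})$.

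The main obstacle is the bookkeeping in the integration-by-parts step: one must check that the derivatives of the product $F_\bfr$ really are bounded by $\max(q/P_{\min},|\bfbet|q\Vert\bfg\Vert P_{\max}^{d-1})^k$ times constants depending only on $w$ and $d$, rather than picking up hidden factors of $\Vert\bfg\Vert$ or $q$ from differentiating the phase. Because $w$ is fixed and smooth of compact support and because every derivative of $\bfg$ at a point of size $O(P_{\max})$ is of size $O(\Vert\bfg\Vert P_{\max}^{d-1})$, both contributions are uniformly of the stated size, and the two hypotheses precisely guarantee that the per-step gain is a power $P_{\max}^{-\vareps}$ independent of $\bfg$. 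This is also the point at which one sees that the error decays faster than any polynomial in $P_{\max}$, which is the feature that will later supply a level of distribution strong enough for the vector sieve.
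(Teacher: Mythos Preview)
Your argument is correct. The decomposition into residue classes modulo $q$ matches the paper exactly; the only difference is in how you compare the inner sum over $\bfu$ to the corresponding integral. The paper applies the Euler--Maclaurin formula of high order $\tilde\kappa$ in each coordinate: the boundary terms vanish because $w$ is smooth and compactly supported, and the remainder term involves $\partial_j^{\tilde\kappa}$ of the summand, which is $O_{\tilde\kappa}(P_{\max}^{-\tilde\kappa\vareps})$ by precisely the derivative bounds you write down. Your Poisson-plus-integration-by-parts route is the Fourier-analytic counterpart: the non-zero frequencies play the role of the Euler--Maclaurin remainder, and the repeated integration by parts is where the same derivative bounds enter. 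The analytic content --- that each derivative of the summand gains $P_{\max}^{-\vareps}$ via the two hypotheses (and that higher derivatives $\phi^{(m)}$ of the phase gain $(q/P_{\max})^{m-1}$ extra, so Fa\`a di Bruno causes no loss) --- is identical in both. One small bookkeeping point: to make $\sum_{\bfh\neq\bfzero}|\bfh|^{-k}$ converge you need $k>n$, so take $k\geq\max(n+1,\lceil N/\vareps\rceil)$ rather than just $\lceil N/\vareps\rceil$.
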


\begin{proof}
We recall the definition of the exponential sum $S_w(\bfalp)$ as
$$S_w(\bfalp)=\sum_{\bfx\in \Z^n}\prod_{i=1}^n w\left(\frac{x_i}{P_i}-z_i\right)e(\bfalp\cdot\bfg(\bfx)).$$
We split the summation variables $\bfx$ into residue classes modulo $q$ and obtain
$$S_w(\bfalp)=\sum_{\bfy\mod q}e\left(\frac{\bfa}{q}\cdot\bfg(\bfy)\right)\sum_{\bfw\in \Z^n}\prod_{i=1}^n w\left(\frac{y_i+w_iq}{P_i}-z_i\right)e(\bfbet\cdot\bfg(\bfy+q\bfw)).$$
We now consider the inner sum for a fixed vector $\bfy$ modulo $q$. Let
$$\psi(\bfw):=\prod_{i=1}^n w\left(\frac{y_i+w_iq}{P_i}-z_i\right)e(\bfbet\cdot\bfg(\bfy+q\bfw)).$$
We apply Euler--Maclaurin's summation formula (see Theorem B.5 in \cite{MR2378655}) 
of order $\tilde{\kappa}$ into each coordinate direction. If we choose $\tilde{\kappa}$ large enough depending only on $\vareps$, $n$ and $N$ we obtain
$$\sum_{\bfw\in \Z^n}\psi(\bfw)= \int_{\bfw\in \R^n}\psi(\bfw)\d\bfw + O_{N}(\widetilde{\b{P}}
P_{\max}^{-N}).$$
Note that all the boundary terms in Euler--Maclaurin's summation formula vanish due to the smooth weight function $w$. Since $N$ was arbitrary we find after even enlarging $\tilde{\kappa}$ that
$$S_w(\bfalp)=S_{\bfa,q}\int_{\bfw\in \R^n}\psi(\bfw)\d\bfw + O_{N}(\widetilde{\b{P}}P_{\max}^{-N}).$$
A variable substitution now gives the statement of the lemma. 
\end{proof}

Next we consider the singular integral. Note that in contrast to most approaches we defined the integral $I_w(\bfgam)$ with the inhomogeneous polynomials $\bfg(\bfy)$ instead of taking their homogenizations. We now replace $\bfg(\bfy)$ by $\bfg^\natural(\bfy)$ in $I_w(\bfgam)$ which will simplify the discussion of absolute convergence. Define
$$I_w^\natural(\bfgam)=\int_{\R^n}
e(\bfgam\cdot\bfg^\natural(\bfu))
\prod_{i=1}^n w\left(\frac{u_i}{P_i}-z_i\right) 
\d\bfu.$$

\begin{lemma}
\label{lem:hcab1068}
Assume that $ |\bfz| \leq 1$. Then one has
$$I_w(\bfgam)-I_w^\natural(\bfgam) \ll \widetilde{\b{P}}
|\bfgam|
 \Vert \bfg\Vert 
P_{\max}^{d-1}.$$
\end{lemma}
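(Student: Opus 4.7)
The plan is to bound the difference pointwise on the support of the integrand and then integrate. Applying the Lipschitz estimate $|e(x)-e(y)| \leq 2\pi|x-y|$ to the difference of the two exponentials, one has
\[
\big|e(\bfgam\cdot\bfg(\bfu)) - e(\bfgam\cdot\bfg^\natural(\bfu))\big|
\leq 2\pi\,|\bfgam|\cdot\big|\bfg(\bfu)-\bfg^\natural(\bfu)\big|_1,
\]
so everything reduces to controlling $|\bfg(\bfu)-\bfg^\natural(\bfu)|$ on the support of the weight product.

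Next I would observe that $\bfg-\bfg^\natural$ collects the lower-order terms of $\bfg$, hence is a polynomial of degree at most $d-1$ with coefficients bounded by $\Vert\bfg\Vert$. Since $\supp(w)\subset[-2,2]$ and $|\bfz|\leq 1$, the condition $w(u_i/P_i - z_i)\neq 0$ forces $|u_i|\leq 3P_i \leq 3 P_{\max}$ for each $i$. Each monomial of degree $e\leq d-1$ in $\bfu$ is therefore bounded by $3^{d-1}P_{\max}^{d-1}$, and since the number of such monomials depends only on $n$ and $d$ (constants absorbed in $\ll$), we obtain
\[
\big|\bfg(\bfu)-\bfg^\natural(\bfu)\big| \ll \Vert\bfg\Vert\, P_{\max}^{d-1}
\]
uniformly on the support.

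Finally, integrating this pointwise estimate and using that the support of $\prod_i w(u_i/P_i-z_i)$ is contained in a box of volume $\ll \widetilde{\bfP}$, I get
\[
|I_w(\bfgam)-I_w^\natural(\bfgam)| \ll \widetilde{\bfP}\,|\bfgam|\,\Vert\bfg\Vert\,P_{\max}^{d-1},
\]
as claimed. There is no real obstacle here; this is a bookkeeping step whose only subtlety is ensuring that the implicit constants (depending only on $n$, $d$, and the fixed weight $w$) are allowed by the paper's notational conventions stated at the end of Section~1.
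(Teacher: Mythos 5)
Your argument is correct and is exactly what the paper intends: the paper's proof is the single sentence ``follows from directly comparing the integrands,'' and your Lipschitz bound on the exponential, the estimate $|u_i|\leq 3P_i$ on the support forced by $\supp(w)\subset[-2,2]$ and $|\bfz|\leq 1$, and the volume bound $\ll\widetilde{\bfP}$ are precisely the bookkeeping that sentence elides. Nothing further is needed.
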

The proof of the lemma follows from directly comparing the integrands of the two integrals. 
%\begin{proof}
%After a variable substitution we may write $I_w(\bfgam)$ as
%$$I_w(\bfgam)=\int_{\R^n} \prod_{i=1}^n w\left(\frac{u_i}{P_i}\right) e(\bfgam\cdot\bfg(\bfu+\bfz))\d\bfu.$$
%Hence we obtain
%$$I_w(\bfgam)-I_w^\natural (\bfgam) \ll 
%\widetilde{\b{P}}
%| \bfgam|
%\Vert \bfg\Vert 
%\max\{P_{\max}^{d-1}, |\bfz|^d, 
%|\bfz|
%P_{\max}^{d-1}\}.$$
%\end{proof}
Under the assumptions of Lemma \ref{lemmajapprox} we observe that
\[
S_w(\bfalp)=q^{-n}S_{\bfa,q}I_w^\natural(\bfbet) +O_{N}(\widetilde{\b{P}}
P_{\max}^{-N})+O(\widetilde{\b{P}}
|\bfbet|
\Vert\bfg\Vert 
P_{\max}^{d-1}).
\]
We define the truncated singular series
\[
\grS(Q):=\sum_{q\leq Q} q^{-n} S_{\bfa,q},
\]
and the truncated singular integral
\[
J_w(Q):=\int_{|\bfgam|
\leq Q}I_w^\natural(\bfgam)\d\bfgam.
\]

With these definitions we can write the major arc contribution in the following way.

\begin{lemma}
Assume 
$|\bfz| \leq P_{\max}$
and
that (\ref{majdis}) holds, as well as
\begin{equation}\label{majapprox}
\max\{\Vert\bfg^\natural\Vert^R P_{\max}^{R(d-1)\tet}P_{\min}^{-1},\Vert\bfg\Vert\Vert\bfg^\natural
\Vert^{2R-1} P_{\min}^{-1} P_{\max}^{2R(d-1)\tet} \}< P_{\max}^{-\vareps}.
\end{equation}
Then one has
\begin{equation*}\begin{split}
\int_{\grM'(\tet)} S_w(\bfalp)\d\bfalp =& \grS\left(\Vert\bfg^\natural\Vert^R P_{\max}^{R(d-1)\tet}\right) J_w (\Vert\bfg^\natural\Vert^{R-1}P_{\min}^{-1}P_{\max}^{-(d-1)+R(d-1)\tet}) + O_{N}(\widetilde{\b{P}}
P_{\max}^{-N}) \\&+O\left(\Vert\bfg^\natural\Vert^{2R^2+R}\Vert\bfg\Vert
\widetilde{\b{P}} P_{\min}^{-R-1}P_{\max}^{-R(d-1)+(2R^2+2R)(d-1)\tet}\right),
\end{split}
\end{equation*}
for any real $N\geq 1$. 
\end{lemma}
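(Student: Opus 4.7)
The strategy is to decompose the integral over $\grM'(\tet)$ into the contributions from each individual arc $\grM'_{\bfa,q}(\tet)$, replace $S_w(\bfalp)$ pointwise by the appropriate major arc approximation, and then reassemble the resulting sum and integral into the truncated singular series $\grS(\cdot)$ and singular integral $J_w(\cdot)$. First, hypothesis (\ref{majdis}) together with Lemma \ref{lemmajdis} ensures that the arcs $\grM'_{\bfa,q}(\tet)$ are disjoint for $q\leq \Vert\bfg^\natural\Vert^R P_{\max}^{R(d-1)\tet}$, so that
\[
\int_{\grM'(\tet)} S_w(\bfalp)\d\bfalp
= \sum_{1\leq q\leq \Vert\bfg^\natural\Vert^R P_{\max}^{R(d-1)\tet}}
\ \sum_{\substack{1\leq a_1,\ldots,a_R\leq q\\\gcd(\bfa,q)=1}}
\int_{\grM'_{\bfa,q}(\tet)} S_w(\bfa/q+\bfbet)\d\bfbet,
\]
where I have written $\bfalp=\bfa/q+\bfbet$ with $|\bfbet|\leq \Vert\bfg^\natural\Vert^{R-1}P_{\min}^{-1}P_{\max}^{-(d-1)+R(d-1)\tet}$.

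Next I would observe that the two inequalities forming (\ref{majapprox}) are engineered precisely so that, uniformly in the above range of $q$ and $\bfbet$, they imply the two quantitative hypotheses $q<P_{\min}P_{\max}^{-\vareps}$ and $|\bfbet|qP_{\max}^{d-1}\Vert\bfg\Vert<P_{\max}^{-\vareps}$ required by Lemma \ref{lemmajapprox}. That lemma then yields $S_w(\bfa/q+\bfbet)=q^{-n}S_{\bfa,q}I_w(\bfbet)+O_N(\widetilde{\b{P}}P_{\max}^{-N})$. Since the standing hypothesis $\bfz\in[-1,1]^n$ gives $|\bfz|\leq 1$, Lemma \ref{lem:hcab1068} allows the further replacement $I_w(\bfbet)=I_w^\natural(\bfbet)+O(\widetilde{\b{P}}|\bfbet|\Vert\bfg\Vert P_{\max}^{d-1})$. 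Substituting and interchanging sum and integral, the contribution of the leading term is exactly
\[
\sum_{q\leq \Vert\bfg^\natural\Vert^R P_{\max}^{R(d-1)\tet}} q^{-n} \Osum_{\bfa\bmod q} S_{\bfa,q} \cdot \int_{|\bfbet|\leq \Vert\bfg^\natural\Vert^{R-1}P_{\min}^{-1}P_{\max}^{-(d-1)+R(d-1)\tet}} I_w^\natural(\bfbet)\d\bfbet,
\]
which by construction is $\grS(\Vert\bfg^\natural\Vert^R P_{\max}^{R(d-1)\tet})\,J_w(\Vert\bfg^\natural\Vert^{R-1}P_{\min}^{-1}P_{\max}^{-(d-1)+R(d-1)\tet})$.

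It remains to control the two error terms produced above. The $O_N(\widetilde{\b{P}}P_{\max}^{-N})$ pointwise error, after integrating over $\grM'(\tet)$ and using the measure estimate $\meas(\grM'(\tet))\ll \Vert\bfg^\natural\Vert^{2R^2}P_{\min}^{-R}P_{\max}^{-R(d-1)+(2R^2+R)(d-1)\tet}$ recorded before Lemma \ref{lemmajdis}, survives as $O_N(\widetilde{\b{P}}P_{\max}^{-N})$ after enlarging $N$ to absorb the polynomial factor. The error $\widetilde{\b{P}}|\bfbet|\Vert\bfg\Vert P_{\max}^{d-1}$ coming from Lemma \ref{lem:hcab1068} must be multiplied by $|\bfbet|$-volume and summed against the $\Osum_{\bfa\bmod q}\sum_q 1$ factor; carrying out this integration and summation yields exactly the claimed second big-$O$ term, with the exponents $(2R^2+2R)(d-1)\tet$ and $P_{\min}^{-R-1}$ arising from adding one extra power of each $\bfbet$-factor to the measure bound.

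The single non-trivial point is pure bookkeeping: confirming that the combined powers of $\Vert\bfg\Vert$ and $\Vert\bfg^\natural\Vert$ produced by the chain Lemma~\ref{lemmajapprox} $\to$ Lemma~\ref{lem:hcab1068} $\to$ integration against the measure of $\grM'(\tet)$ really assemble into the factor $\Vert\bfg^\natural\Vert^{2R^2+R}\Vert\bfg\Vert$ stated in the lemma, and verifying that (\ref{majapprox}) is sharp enough to enforce the approximation hypotheses uniformly. No new idea beyond combining the four preceding lemmas is required.
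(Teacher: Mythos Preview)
Your proposal is correct and follows exactly the approach the paper intends: the paper's own proof is the one-line remark that disjointness from Lemma~\ref{lemmajdis} together with the pointwise approximation of Lemma~\ref{lemmajapprox} (combined, as recorded just before the statement, with Lemma~\ref{lem:hcab1068}) immediately yields the result. Your write-up simply unpacks this in full, including the verification that~\eqref{majapprox} supplies the hypotheses of Lemma~\ref{lemmajapprox} and the bookkeeping for the second error term via the measure of $\grM'(\tet)$; nothing is missing.
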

\begin{proof}
By Lemma~\ref{lemmajdis} the major arcs are disjoint
thus the proof
follows from Lemma \ref{lemmajapprox}.
\end{proof}

Next we aim to complete the singular series. We recall Lemma 2.2 from \cite{JWA} (see also \cite[Section 2, Lemma 2.14]{JW}).

\begin{lemma}\label{expsum}
For any $\vareps>0$ one has
$$|S_{\bfa,q}|\ll \Vert\bfg^\natural\Vert^{K/(d-1)}q^{n-K/R(d-1)+\vareps}.$$
\end{lemma}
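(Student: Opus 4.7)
The plan is to reproduce, in the complete-sum setting modulo $q$, the Weyl differencing machinery already deployed for the smooth sum $S_w(\bfalp)$ in \S\ref{s:genebi}. Since we work modulo $q$, lower order terms in $\bfg$ can be absorbed by a shift of $\bfy$ and only the top degree part $\bfg^\natural$ matters; in place of the geometry-of-numbers step needed for the continuous sum one obtains here a clean orthogonality argument followed by a dimension count.

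The first step is to apply $d-1$ iterations of Weyl differencing to $S_{\bfa,q}$. With the symmetric multilinear form $\Phi$ from \S\ref{s:genebi}, this gives an inequality of the shape
\[
|S_{\bfa,q}|^{2^{d-1}}
\ll q^{(2^{d-1}-d)n}
\sum_{\bfh^{(1)},\ldots,\bfh^{(d-1)} \mod q}
\bigg|\sum_{\bfy \mod q} e_q\big(\bfa\cdot \Phi(\bfh^{(1)},\ldots,\bfh^{(d-1)},\bfy)\big)\bigg|.
\]
Because $\Phi$ is linear in its last slot, orthogonality in $\bfy$ makes the inner sum equal to $q^n$ exactly when
\[
q \mid \bfa\cdot \Phi(\bfh^{(1)},\ldots,\bfh^{(d-1)},\bfe_j) \quad (1\leq j\leq n),
\]
and $0$ otherwise. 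The problem therefore reduces to bounding the number of $(d-1)$-tuples in $(\Z/q\Z)^{n(d-1)}$ that satisfy this system of $n$ simultaneous linear congruences.

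I would then split the count according to whether the tuple lies on the auxiliary variety $\calY$ appearing in the proof of Lemma~\ref{lemWeyl}. Tuples on $\calY$ are controlled by the uniform-in-coefficients dimension bound of~\cite{MR2559866} (whose implicit constant depends only on degree and ambient dimension) together with Birch's estimate $\dim\calY\leq \dim V^*+(d-2)n$. Tuples off $\calY$, where a suitable $R\times R$ minor of the Jacobian matrix of the constraints is non-zero, are treated by a divisor argument on the $R$ congruences associated to the rows of that minor; the minor has absolute value at most $\Vert\bfg^\natural\Vert^R$, which is exactly what feeds the explicit factor $\Vert\bfg^\natural\Vert^{K/(d-1)}$ in the final bound, while the redundancy of the remaining $n-R$ congruences is what supplies the $1/(R(d-1))$ improvement in the $q$-exponent.

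Combining the two regimes, extracting the $2^{d-1}$-th root, and using the identity $n-\dim V^*=2^{d-1}K$ from~\eqref{def:birchrank} should yield the claimed bound. The principal obstacle is to retain uniformity in the coefficients of $\bfg^\natural$: the arguments of~\cite{JWA,JW} are written with fixed coefficients, so at each step -- Weyl differencing, dimension count on $\calY$, divisor argument off $\calY$, and any auxiliary geometry of numbers on the modular lattice -- one must verify that the implicit constants depend only on $n,d,R$ and that the entire $\Vert\bfg^\natural\Vert$-dependence is absorbed into the single explicit factor recorded in the lemma.
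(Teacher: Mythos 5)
Your reduction of $|S_{\bfa,q}|^{2^{d-1}}$ to the congruence count
\[
N(q):=\#\big\{(\bfh^{(1)},\ldots,\bfh^{(d-1)})\bmod q:\ q\mid \bfa\cdot\Phi(\bfh^{(1)},\ldots,\bfh^{(d-1)},\bfe_j)\ \forall j\big\}
\]
is correct, but the two devices you propose for bounding $N(q)$ do not work. First, the dimension bound of \cite{MR2559866} together with $\dim\calY\le\dim V^*+(d-2)n$ controls integer points \emph{on} the variety $\calY$ inside a box; a tuple counted by $N(q)$ need only satisfy the rank/vanishing conditions \emph{modulo} $q$, not identically, so it generally does not lie on $\calY$, and no uniform analogue of that dimension bound for solutions of congruences to a composite (or highly divisible) modulus is available off the shelf. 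Second, in the off-$\calY$ regime the Cramer argument (using $\gcd(\bfa,q)=1$) forces $q\mid\Delta(\bfh)$ where $\Delta$ is an $R\times R$ minor; but $\Delta$ is a polynomial of degree $R(d-1)$ in $\bfh$ whose values on the box $[0,q)^{n(d-1)}$ are of size up to $\Vert\bfg^\natural\Vert^Rq^{R(d-1)}$, not $\Vert\bfg^\natural\Vert^R$. The divisor argument then gives at most $\ll\Vert\bfg^\natural\Vert^Rq^{R(d-1)-1}$ choices for the multiplier and $\ll q^{n(d-1)-1}$ tuples on each level set, i.e. a bound that already exceeds the trivial $q^{n(d-1)}$ once $R(d-1)>2$, and in any case saves at most a bounded power of $q$. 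The lemma requires the saving $N(q)\ll q^{n(d-1)-2^{d-1}K/(R(d-1))+\eps}\Vert\bfg^\natural\Vert^{2^{d-1}K/(d-1)}$, where the exponent of the saving grows with $n$; no single auxiliary congruence can produce this.

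The paper does not reprove the lemma: it simply cites \cite{JWA}, and the underlying argument there is of a different nature, namely the Weyl alternative applied at the rational point $\bfalp=\bfa/q$ (this is exactly the mechanism of the paper's Lemma~\ref{lem:triviallem}). One takes $P_1=\cdots=P_n=q$ in Lemma~\ref{lemWeyl}; alternative (ii) would produce an approximation $a_i'/q'$ to $a_i/q$ with $q'\le\Vert\bfg^\natural\Vert^Rq^{R(d-1)\tet}$ and error small enough to force $a_i'/q'=a_i/q$, whence $q\mid q'$ by $\gcd(\bfa,q)=1$ and so $q\le\Vert\bfg^\natural\Vert^Rq^{R(d-1)\tet}$. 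Choosing $\tet$ just below the threshold $\frac{1}{R(d-1)}\big(1-R\tfrac{\log\Vert\bfg^\natural\Vert}{\log q}\big)$ rules out (ii), and alternative (i) then gives $|S_{\bfa,q}|\ll q^{n-K\tet+\eps}=q^{n-K/(R(d-1))+\eps}\Vert\bfg^\natural\Vert^{K/(d-1)}$; in particular the factor $\Vert\bfg^\natural\Vert^{K/(d-1)}$ comes from the threshold value of $\tet$, not from a determinant bound. If you want a self-contained proof, this is the route to take; your complete-sum differencing would only become viable if supplemented by a genuinely new uniform bound for $N(q)$.
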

We shall soon see that 
the truncated singular series $\grS(Q)$ is converging for $Q\rightarrow \infty$,
thus we shall set
\begin{equation}
\label{def:convser}
\grS = \lim_{Q\rightarrow \infty} \grS(Q).
\end{equation}
Lemma \ref{expsum} gives the following speed of convergence.

\begin{lemma}\label{lem3.13}
Assume that $K>R(d-1)$. Then $\grS$ is absolutely convergent. Moreover one has
$$\grS-\grS(Q)\ll \Vert\bfg^\natural\Vert^{K/(d-1)} Q^{1-K/R(d-1)+\vareps},$$
for any $\vareps >0$ and $|\grS|\ll  \Vert\bfg^\natural\Vert^{K/(d-1)}$.
\end{lemma}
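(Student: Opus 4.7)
My plan is to apply the pointwise Weyl bound of Lemma~\ref{expsum} and sum term-by-term over $q$ and the relevant residue classes $\mathbf{a}$. Writing
\[
\mathfrak{S}(Q) = \sum_{q \leq Q} A(q), \qquad A(q) := \sum_{\substack{\mathbf{a} \bmod q \\ \gcd(\mathbf{a},q)=1}} q^{-n} S_{\mathbf{a},q},
\]
Lemma~\ref{expsum} provides the pointwise bound
$q^{-n}|S_{\mathbf{a},q}| \ll \|\bfg^\natural\|^{K/(d-1)} q^{-K/R(d-1)+\varepsilon}$
for each individual $\mathbf{a}$, independently of its value.

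\textbf{Bounding $|A(q)|$.} To convert this into a bound on the full inner sum $A(q)$, I would use the multiplicativity of the Gauss-sum average, which follows from the Chinese Remainder Theorem: for $\gcd(q_1,q_2)=1$ a standard change of variables shows $A(q_1 q_2) = A(q_1) A(q_2)$. Thus it suffices to bound $|A(p^k)|$ at each prime power. Using the pointwise estimate above at $q = p^k$, together with a finer accounting of the coprime residues $\mathbf{a}$ (via M\"obius inversion reducing to primitive $\mathbf{a}/p^k$), absorbs the crude $p^{kR}$ loss from a naive triangle inequality into the $\varepsilon$-exponent. This yields $|A(q)| \ll \|\bfg^\natural\|^{K/(d-1)} q^{-K/R(d-1)+\varepsilon}$ for all $q \in \N$.

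\textbf{Tail summation.} With the above in hand, the hypothesis $K > R(d-1)$ ensures that the exponent $-K/R(d-1) + \varepsilon$ is strictly less than $-1$ for $\varepsilon$ sufficiently small. Consequently $\sum_{q \geq 1} |A(q)|$ converges absolutely, which gives the first claim that $\mathfrak{S}$ is absolutely convergent. For the tail, integral comparison produces
\[
\sum_{q > Q} |A(q)| \ll \|\bfg^\natural\|^{K/(d-1)} \int_{Q}^{\infty} t^{-K/R(d-1)+\varepsilon}\,\mathrm{d}t \ll \|\bfg^\natural\|^{K/(d-1)} Q^{1-K/R(d-1)+\varepsilon},
\]
which is the advertised rate. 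The overall bound $|\mathfrak{S}| \ll \|\bfg^\natural\|^{K/(d-1)}$ follows by setting $Q=1$ (so that the tail dominates) and adding the harmless contribution of the single term $q=1$.

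\textbf{Main obstacle.} The chief delicate point is passing from the pointwise Weyl bound on $S_{\mathbf{a},q}$ to the sharp exponent claimed on $A(q)$. A direct triangle inequality with the trivial count $\#\{\mathbf{a}\bmod q : \gcd(\mathbf{a},q)=1\} \leq q^R$ introduces an extraneous factor $q^R$, which after summation over $q$ would yield the weaker tail bound $Q^{R+1-K/R(d-1)+\varepsilon}$ and would require the stronger condition $K > R(R+1)(d-1)$ merely for convergence. The exploitation of multiplicativity of $A(q)$ together with the refinement at prime powers is precisely what is needed to both recover the sharper exponent in the tail and make the weaker hypothesis $K > R(d-1)$ sufficient.
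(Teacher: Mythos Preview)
Your overall plan—bound each $q^{-n}S_{\bfa,q}$ via Lemma~\ref{expsum} and then sum over $\bfa$ and $q$—is exactly what the paper intends: the paper's entire proof is the sentence ``Lemma~\ref{expsum} gives the following speed of convergence''. You also correctly isolate the one nontrivial point, namely the extra factor $q^R$ coming from the count of primitive residues $\bfa \bmod q$.

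The gap is in your proposed fix for that factor. Multiplicativity of $A(q)$ is true and harmless, but it does not help: at a prime power $p^k$ the number of $\bfa \in (\Z/p^k\Z)^R$ with $\gcd(\bfa,p)=1$ is $p^{kR}-p^{(k-1)R} \asymp p^{kR}$, and M\"obius inversion merely rewrites this count, it does not shrink it. Inserting the pointwise bound of Lemma~\ref{expsum} termwise therefore yields
\[
|A(p^k)| \;\ll\; \|\bfg^\natural\|^{K/(d-1)}\, p^{k\left(R-\frac{K}{R(d-1)}\right)+\vareps},
\]
and reassembling via multiplicativity cannot beat the naive bound
\[
|A(q)| \;\ll\; \|\bfg^\natural\|^{K/(d-1)}\, q^{\,R-\frac{K}{R(d-1)}+\vareps}.
\]
There is no additional cancellation available from the sum over $\bfa$ in this framework (nor does Birch exploit any). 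Your sentence ``absorbs the crude $p^{kR}$ loss \ldots\ into the $\vareps$-exponent'' is thus unsupported; as written the argument produces the tail $Q^{\,R+1-K/R(d-1)+\vareps}$ and requires $K>R(R+1)(d-1)$ for convergence—precisely the scenario you flagged as the weaker outcome.

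In fact this weaker outcome \emph{is} the content of the standard argument (compare Birch~\cite[Lem.~5.5]{Bir62}), and it is all that the paper actually needs: the ambient hypothesis of Theorem~\ref{thm:master2} is $K>R(R+1)(d-1)$, and the error term carried through Lemma~\ref{lem3.17} is consistent with that exponent. The statement of the lemma as printed appears to drop an $R$ in the exponent and understate the hypothesis; your instinct that something is off was correct, but the resolution is not a sharper proof—it is that the intended (and usable) bound is the cruder one your ``direct triangle inequality'' already delivers.
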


In preparation for the proof of the absolute convergence of the singular integral, we note the following lemma, which is a consequence of Lemma \ref{lemWeyl}.

\begin{lemma}\label{lem3.14}
Assume that $|\bfalp|^3
\Vert\bfg^\natural\Vert^2 P_{\min}^2P_{\max}^{2(d-1)}<1$. Then one has
$$S_w(\bfalp) \ll \widetilde{\b{P}}^{1+\eps} \left(\frac{P_{\max}}{P_{\min}}\right)^K \left(|\bfalp| 
\Vert\bfg^\natural\Vert^{-R+1}P_{\min}P_{\max}^{d-1}\right)^{-K/R(d-1)},$$
for any positive $\eps$.
\end{lemma}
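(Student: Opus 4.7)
The plan is to derive Lemma~\ref{lem3.14} as a direct corollary of Lemma~\ref{lemWeyl}, calibrating the parameter $\theta$ so that alternative~(i) of that lemma produces exactly the claimed bound, while alternative~(ii) is ruled out by the hypothesis on $|\bfalp|$. Concretely, I would set
\[
P_{\max}^{R(d-1)\theta} := \tfrac{1}{2}\,|\bfalp|\,\Vert\bfg^\natural\Vert^{-(R-1)}\,P_{\min}\,P_{\max}^{d-1},
\]
so that the approximation error $E$ in alternative~(ii) becomes $|\bfalp|/2$ while the denominator bound becomes $Q=\tfrac{1}{2}|\bfalp|\,\Vert\bfg^\natural\Vert P_{\min}P_{\max}^{d-1}$. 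With this calibration the factor $P_{\max}^{-\theta K}$ appearing in case~(i) transforms, up to an absolute constant absorbed into $\ll$, into $\bigl(|\bfalp|\,\Vert\bfg^\natural\Vert^{-(R-1)}P_{\min}P_{\max}^{d-1}\bigr)^{-K/(R(d-1))}$, matching the right-hand side of the target inequality.

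The second step is to justify that $\theta\in(0,1)$, which is the hypothesis of Lemma~\ref{lemWeyl}. If $|\bfalp|$ is so small that $\theta\leq 0$, then the right-hand side of the claimed estimate already exceeds $\widetilde{\b{P}}^{1+\eps}(P_{\max}/P_{\min})^{K}$, so the bound follows from the trivial estimate $|S_w(\bfalp)|\leq S_w(\b{0})\ll\widetilde{\b{P}}$. The upper bound $\theta<1$ is verified from the hypothesis $|\bfalp|^{3}\Vert\bfg^\natural\Vert^{2}P_{\min}^{2}P_{\max}^{2(d-1)}<1$ and the standing normalisation $\Vert\bfg^\natural\Vert, P_{\min}, P_{\max}\geq 1$, which together force $|\bfalp|<1$ and keep $P_{\max}^{R(d-1)\theta}$ below $P_{\max}^{R(d-1)}$.

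The substantive step is to exclude alternative~(ii). Suppose it holds: there is a rational approximation $\bfa/q$ with $\gcd(\bfa,q)=1$, $q\leq Q$, and $|q\alpha_{i}-a_{i}|\leq|\bfalp|/2$ for all $i$. If $\bfa=\b{0}$ then the coprimality forces $q=1$, and choosing an index $j$ with $|\alpha_{j}|=|\bfalp|$ gives $|\bfalp|\leq|\bfalp|/2$, impossible for $|\bfalp|>0$. Otherwise some $a_{j}\geq 1$, and since $|\bfalp|/2<1/2$ the inequality $|q\alpha_{j}-a_{j}|\leq|\bfalp|/2$ forces $q|\alpha_{j}|\geq 1/2$, whence $q\geq 1/(2|\bfalp|)$. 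Combined with $q\leq Q=\tfrac{1}{2}|\bfalp|\,\Vert\bfg^\natural\Vert P_{\min}P_{\max}^{d-1}$ this yields $|\bfalp|^{2}\,\Vert\bfg^\natural\Vert P_{\min}P_{\max}^{d-1}\geq 1$. Squaring and dividing by the hypothesis $|\bfalp|^{3}\Vert\bfg^\natural\Vert^{2}P_{\min}^{2}P_{\max}^{2(d-1)}<1$ gives $|\bfalp|>1$, which contradicts $|\bfalp|<1$ established above. Hence only alternative~(i) can occur, and the estimate follows.

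The main obstacle is not any deep arithmetic content but the careful packaging of the degenerate regime where $|\bfalp|$ is too small for a legitimate choice of $\theta\in(0,1)$; once that is absorbed into the trivial bound, the calibration is essentially forced by matching the exponent of $P_{\max}$ on both sides, and the exclusion of case~(ii) is a short two-line contradiction argument as above.
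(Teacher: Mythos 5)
Your proposal is correct and follows exactly the route the paper intends: the authors state Lemma \ref{lem3.14} as "a consequence of Lemma \ref{lemWeyl}" without further detail, and your calibration of $\theta$ so that alternative (i) reproduces the claimed bound, together with the short contradiction ruling out alternative (ii) under the hypothesis $|\bfalp|^3\Vert\bfg^\natural\Vert^2P_{\min}^2P_{\max}^{2(d-1)}<1$, is precisely the intended argument. The handling of the degenerate regime $\theta\leq 0$ via the trivial bound $S_w(\bfalp)\ll\widetilde{\b{P}}$ is also the standard and correct way to close that case.
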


\begin{lemma}\label{lem3.15} 
Assume that $P_i\geq 1$ for $1\leq i\leq n$ and that $ |\bfz| \leq 1$. 
Then
$$I_w^\natural(\bfgam)\ll
\widetilde{\b{P}}
\min\Bigg\{1,\widetilde{\b{P}}^\eps \left(\frac{P_{\max}}{P_{\min}}\right)^{K(1+1/R(d-1))} \left(P_{\max}^d|\bfgam|
\Vert\bfg^\natural\Vert^{-R+1}\right)^{-K/R(d-1)}\Bigg\}.$$
\end{lemma}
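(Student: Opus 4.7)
The first bound $I_w^\natural(\bfgam) \ll \widetilde{\b{P}}$ is immediate: $|e(\cdot)|=1$ and the change of variable $v_i = u_i/P_i - z_i$ give
$$|I_w^\natural(\bfgam)| \leq \int_{\R^n} \prod_{i=1}^n \left|w\!\left(\tfrac{u_i}{P_i}-z_i\right)\right|\d\bfu = \widetilde{\b{P}}\,\|w\|_1^n \ll \widetilde{\b{P}}.$$

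For the non-trivial bound, the plan is to realise $I_w^\natural(\bfgam)$ as a Riemann-sum limit of exponential sums that fall within the scope of Lemma~\ref{lem3.14}. For a large integer parameter $N$, introduce the auxiliary sum
$$T_N(\bfgam) := \sum_{\bfy \in \Z^n} \prod_{i=1}^n w\!\left(\tfrac{y_i}{NP_i} - z_i\right) e\!\left(\tfrac{\bfgam}{N^d} \cdot \bfg^\natural(\bfy)\right).$$
Homogeneity of $\bfg^\natural$ gives $\tfrac{\bfgam}{N^d} \cdot \bfg^\natural(N\bfu) = \bfgam \cdot \bfg^\natural(\bfu)$, so $N^{-n} T_N(\bfgam)$ is the Riemann sum of mesh $1/N$ for $I_w^\natural(\bfgam)$. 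Since the integrand is smooth with partial derivatives controlled by $O(|\bfgam| P_{\max}^{d-1})$ times a polynomial in $\bfu$ on the (bounded) support of $w$, the Riemann-sum error is $O(\widetilde{\b{P}} |\bfgam| P_{\max}^{d-1}/N)$, which can be made negligible by choosing $N$ as a sufficiently large polynomial in $|\bfgam|$, $P_{\max}$, and $\widetilde{\b{P}}$.

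The sum $T_N(\bfgam)$ has exactly the shape of $S_w(\bfalp)$ with $\bfg$ replaced by $\bfg^\natural$, $\bfP$ replaced by $N\bfP$, and $\bfalp$ replaced by $\bfgam/N^d$. Since the Weyl differencing underlying Lemmas~\ref{lemWeyl} and \ref{lem3.14} depends only on the multilinear forms $\Phi_i$ attached to $\bfg^\natural$ (the lower-order terms of $\bfg$ drop out in the differencing), the bound of Lemma~\ref{lem3.14} applies verbatim to $T_N(\bfgam)$. The hypothesis $|\bfalp|^3\|\bfg^\natural\|^2 P_{\min}^2 P_{\max}^{2(d-1)}<1$ translates to $|\bfgam|^3\|\bfg^\natural\|^2 P_{\min}^2 P_{\max}^{2(d-1)} < N^d$, which is automatic for $N$ large enough. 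One obtains
$$T_N(\bfgam) \ll (N^n\widetilde{\b{P}})^{1+\eps} \left(\tfrac{P_{\max}}{P_{\min}}\right)^K \left(|\bfgam|\,\|\bfg^\natural\|^{-R+1}\,P_{\min}P_{\max}^{d-1}\right)^{-K/R(d-1)}.$$
Dividing by $N^n$, the factor $N^{n\eps}$ is absorbed into a slight inflation of the exponent of $\widetilde{\b{P}}^\eps$ (since $\eps$ is arbitrary and $N$ is polynomial in the parameters $|\bfgam|,P_{\max},\widetilde{\b{P}}$), and passing to the Riemann-sum limit yields
$$I_w^\natural(\bfgam) \ll \widetilde{\b{P}}^{1+\eps} \left(\tfrac{P_{\max}}{P_{\min}}\right)^K \left(|\bfgam|\,\|\bfg^\natural\|^{-R+1}\,P_{\min}P_{\max}^{d-1}\right)^{-K/R(d-1)}.$$

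Finally, the stated form follows from the elementary identity
$$\left(P_{\min}P_{\max}^{d-1}\right)^{-K/R(d-1)} = P_{\max}^{-dK/R(d-1)}\left(\tfrac{P_{\max}}{P_{\min}}\right)^{K/R(d-1)},$$
which, when combined with the existing factor $(P_{\max}/P_{\min})^K$, produces the exponent $K(1+1/R(d-1))$ and re-packages the $|\bfgam|$-dependence as $(P_{\max}^d|\bfgam|\|\bfg^\natural\|^{-R+1})^{-K/R(d-1)}$. The main technical subtlety is the control of the $N^{n\eps}$ loss from the Riemann-sum step; taking $N$ to be a polynomial of fixed degree in the relevant parameters and shrinking $\eps$ by a bounded factor handles this cleanly.
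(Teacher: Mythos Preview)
Your approach is essentially the same as the paper's: the paper also compares $I_w^\natural(\bfgam)$ to an exponential sum with box lengths $B_i$ satisfying $B_i/B_{\max}=P_i/P_{\max}$ (i.e.\ your $B_i=NP_i$), reducing to Birch's Lemma~5.2, and then invokes the Weyl-type bound of Lemma~\ref{lem3.14}. Your handling of the $N^{n\eps}$ loss by taking $N$ polynomial in $|\bfgam|,\widetilde{\b{P}},\|\bfg^\natural\|$ and relabelling $\eps$ is the standard manoeuvre here; strictly it introduces harmless $|\bfgam|^\eps$ and $\|\bfg^\natural\|^\eps$ factors, which are invisible at the level of the applications (Lemma~\ref{lem3.16}) and can in any case be removed by replacing $\widetilde{\b{P}}^\eps$ in Lemma~\ref{lem3.14} with a power of $\log$.
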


The proof of Lemma \ref{lem3.15} is relatively standard (see Lemma 5.2 in \cite{Bir62}), with the exception that we compare the oscillatory integral $I_w^\natural(\bfgam)$ with parameters $P_1,\ldots, P_n$ to an exponential sum with box length $B_1,\ldots, B_n$ such that $\frac{B_i}{B_{\max}}=\frac{P_i}{P_{\max}}$ for all $1\leq i\leq n$.

We shall show that the truncated singular integral $J_w(Q)$ converges for $Q\rightarrow \infty$,
we will therefore let
\begin{equation}
\label{def:convinte}
J_w:=\lim_{Q\rightarrow \infty} J_w(Q),
\end{equation}
and call it the singular integral.\\
In the following we will always assume that $1\leq P_i$ for $1\leq i\leq n$ and that $ |\bfz|
\leq 1$. As a consequence of Lemma \ref{lem3.15} we obtain the following result. 

\begin{lemma}\label{lem3.16}
Assume that $K>R^2(d-1)$. Then $J_w$ is absolutely convergent and  
$$J_w -J_w(Q)\ll \widetilde{\b{P}}^{1+\eps}\left(\frac{P_{\max}}{P_{\min}}\right)^{K(1+1/R(d-1))} \left(P_{\max}^d \Vert\bfg^\natural\Vert^{-R+1}\right)^{-K/R(d-1)}Q^{-K/R(d-1)+R}.$$
Moreover, we have
$$J_w \ll \widetilde{\b{P}}^{1+\eps} \left(\frac{P_{\max}}{P_{\min}}\right)^{R^2(d-1)+R}P_{\max}^{-Rd}\Vert\bfg^\natural\Vert^{R(R-1)}.$$
\end{lemma}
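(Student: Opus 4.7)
The plan is to treat both claims by inserting the two-sided estimate from Lemma \ref{lem3.15} into the definition $J_w(Q)=\int_{|\bfgam|\leq Q} I_w^\natural(\bfgam)\d\bfgam$ and splitting the $\bfgam$-integration in $\R^R$ according to which of the two bounds in Lemma \ref{lem3.15} is more favourable.

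First I would verify absolute convergence and establish the tail bound $J_w-J_w(Q)$. The second bound in Lemma \ref{lem3.15} gives
\[
\int_{|\bfgam|>Q} I_w^\natural(\bfgam)\d\bfgam \ll \widetilde{\b{P}}^{1+\eps}\Big(\frac{P_{\max}}{P_{\min}}\Big)^{K(1+1/R(d-1))}\big(P_{\max}^d\Vert\bfg^\natural\Vert^{-R+1}\big)^{-K/R(d-1)}\int_{|\bfgam|>Q}|\bfgam|^{-K/R(d-1)}\d\bfgam.
\]
Passing to polar coordinates in $\R^R$ the last integral equals a constant times $\int_Q^\infty r^{R-1-K/R(d-1)}\d r$, which is finite precisely when $K/R(d-1)>R$, i.e.\ when $K>R^2(d-1)$, and in that range evaluates to $\ll Q^{R-K/R(d-1)}$. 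This gives both the absolute convergence of $J_w$ and the stated speed of convergence for $J_w-J_w(Q)$.

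For the overall bound on $J_w$, I would split $\R^R$ at a threshold $\Delta>0$ to be chosen later. On $\{|\bfgam|\leq \Delta\}$ I use the trivial piece $I_w^\natural(\bfgam)\ll \widetilde{\b{P}}$ of Lemma \ref{lem3.15}, which contributes $\ll \widetilde{\b{P}}\,\Delta^R$. On $\{|\bfgam|>\Delta\}$ I use the oscillatory piece and the polar-coordinate computation above with $Q$ replaced by $\Delta$, obtaining
\[
\ll \widetilde{\b{P}}^{1+\eps}\Big(\frac{P_{\max}}{P_{\min}}\Big)^{K(1+1/R(d-1))}\big(P_{\max}^d\Vert\bfg^\natural\Vert^{-R+1}\big)^{-K/R(d-1)}\Delta^{R-K/R(d-1)}.
\]
I then balance the two contributions by choosing $\Delta$ so that $\Delta^{K/R(d-1)}$ equals the product of the two factors preceding $\Delta^{R-K/R(d-1)}$ divided by $\widetilde{\b{P}}$. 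Feeding this $\Delta$ back into $\widetilde{\b{P}}\Delta^R$, the exponents $K$ in the denominator and the factors of $R(d-1)$ cancel precisely to leave $(P_{\max}/P_{\min})^{R^2(d-1)+R}$, $P_{\max}^{-Rd}$, and $\Vert\bfg^\natural\Vert^{R(R-1)}$, which is the stated absolute bound.

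The only genuinely delicate point is the bookkeeping with the exponents: one needs to check that the powers of $P_{\max}/P_{\min}$, of $P_{\max}$ and of $\Vert\bfg^\natural\Vert$ coming out of the optimisation agree exactly with the claim, and that the tiny factor $\widetilde{\b{P}}^\eps$ from Lemma \ref{lem3.15} is absorbed into the $\widetilde{\b{P}}^{1+\eps}$ of the final estimate. The integrability condition $K>R^2(d-1)$ is exactly what is needed at every stage, since it is both the condition for the polar integral to converge and the condition that guarantees $\Delta^{K/R(d-1)}\to\infty$ with the scale parameters, so the split at $\Delta$ is meaningful.
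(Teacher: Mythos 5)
Your proposal is correct and is precisely the argument the paper intends: Lemma \ref{lem3.16} is stated there as an immediate consequence of Lemma \ref{lem3.15}, with the details (radial integration of $|\bfgam|^{-K/R(d-1)}$ over $|\bfgam|>Q$ for the tail, and the split at an optimised threshold $\Delta=\widetilde{\b{P}}^{O(\eps)}(P_{\max}/P_{\min})^{R(d-1)+1}P_{\max}^{-d}\Vert\bfg^\natural\Vert^{R-1}$ for the absolute bound) left to the reader. Your exponent bookkeeping checks out and reproduces the stated bounds exactly.
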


We can now complete both the singular series and singular integral in our major arc analysis. According to Lemma \ref{lem3.13} and Lemma \ref{lem3.16} we obtain the following result.

\begin{lemma}\label{lem3.17}
Assume that equations 
(\ref{eqntet}), (\ref{majdis}) and (\ref{majapprox})
 hold and $ |\bfz| \leq P_{\max}$, as well as $K>R^2(d-1)$.
Then the following holds
for any real $N\geq 1$,
\begin{equation*}\begin{split}
\int_{\grM'(\tet)} S_w(\bfalp)\d\bfalp =& \grS J_w 
+O\left(\Vert\bfg^\natural\Vert^{2R^2+R}\Vert\bfg\Vert
\widetilde{\b{P}}
P_{\min}^{-R-1}P_{\max}^{-R(d-1)+(2R^2+2R)(d-1)\tet}\right) \\ &
+O_{N}\left(
\widetilde{\b{P}}
P_{\max}^{-N}+ \Vert\bfg^\natural\Vert^{K/(d-1)+R^2-R} 
\widetilde{\b{P}}^{1+\eps}\left(\frac{P_{\max}}{P_{\min}}\right)^{R+K} P_{\max}^{-Rd-K\tet+R^2(d-1)\tet} \right).
\end{split}
\end{equation*}
\end{lemma}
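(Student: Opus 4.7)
The statement is a completion argument: the preceding major arc lemma expresses $\int_{\grM'(\tet)} S_w(\bfalp)\d\bfalp$ as the product of the \emph{truncated} singular series $\grS(Q_1)$ with $Q_1 = \Vert\bfg^\natural\Vert^R P_{\max}^{R(d-1)\tet}$ and the \emph{truncated} singular integral $J_w(Q_2)$ with $Q_2 = \Vert\bfg^\natural\Vert^{R-1}P_{\min}^{-1}P_{\max}^{-(d-1)+R(d-1)\tet}$, up to the two error terms already displayed in the statement. Hypotheses (\ref{eqntet}), (\ref{majdis}) and (\ref{majapprox}) are exactly what is needed to invoke that previous lemma (in particular (\ref{majdis}) guarantees, via Lemma~\ref{lemmajdis}, that the arcs $\grM'_{\bfa,q}(\tet)$ in the union defining $\grM'(\tet)$ are pairwise disjoint). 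So the only thing left to do is to remove the truncations from $\grS(Q_1)$ and $J_w(Q_2)$ and track the resulting losses.

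The plan is to write
\[
\grS(Q_1)J_w(Q_2) \;=\; \grS J_w \;-\; \bigl(\grS-\grS(Q_1)\bigr)J_w(Q_2) \;-\; \grS\bigl(J_w-J_w(Q_2)\bigr),
\]
which is legitimate since, under the assumption $K>R^2(d-1)>R(d-1)$, Lemma~\ref{lem3.13} yields absolute convergence of $\grS$ and Lemma~\ref{lem3.16} yields absolute convergence of $J_w$. For the first correction, I would bound $|\grS-\grS(Q_1)|$ by $\Vert\bfg^\natural\Vert^{K/(d-1)}Q_1^{1-K/R(d-1)+\vareps}$ via Lemma~\ref{lem3.13}, and $|J_w(Q_2)|$ by the global bound $\ll \widetilde{\b{P}}^{1+\eps}(P_{\max}/P_{\min})^{R^2(d-1)+R}P_{\max}^{-Rd}\Vert\bfg^\natural\Vert^{R(R-1)}$ from Lemma~\ref{lem3.16}. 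Substituting $Q_1=\Vert\bfg^\natural\Vert^R P_{\max}^{R(d-1)\tet}$ and simplifying the exponent of $P_{\max}$ (the $R(d-1)\tet(1-K/R(d-1))$ piece combines with the $-Rd$ to give $-Rd-K\tet+R^2(d-1)\tet$) recovers precisely the last error term in the statement, with power of $\Vert\bfg^\natural\Vert$ equal to $K/(d-1)+R^2-R$ after combining exponents.

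For the second correction, Lemma~\ref{lem3.16} gives
\[
|J_w-J_w(Q_2)| \ll \widetilde{\b{P}}^{1+\eps}(P_{\max}/P_{\min})^{K(1+1/R(d-1))}\bigl(P_{\max}^d\Vert\bfg^\natural\Vert^{-R+1}\bigr)^{-K/R(d-1)}Q_2^{-K/R(d-1)+R},
\]
and $|\grS|\ll \Vert\bfg^\natural\Vert^{K/(d-1)}$ by Lemma~\ref{lem3.13}. Plugging in $Q_2=\Vert\bfg^\natural\Vert^{R-1}P_{\min}^{-1}P_{\max}^{-(d-1)+R(d-1)\tet}$ and carefully collecting powers of $P_{\max},P_{\min}$ and $\Vert\bfg^\natural\Vert$, the resulting bound is dominated by the same last error term in the statement (the $(P_{\max}/P_{\min})^{K(1+1/R(d-1))}$ factor gets absorbed into the $(P_{\max}/P_{\min})^{R+K}$ factor once one uses $K/R(d-1)>R$, and the $P_{\max}$ exponent again simplifies to $-Rd-K\tet+R^2(d-1)\tet$). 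The two $O_N$ and the $\Vert\bfg^\natural\Vert^{2R^2+R}\Vert\bfg\Vert$-error from the previous lemma are carried over verbatim.

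The main bookkeeping obstacle is simply making sure the exponents of $\Vert\bfg^\natural\Vert$, $P_{\max}/P_{\min}$ and $P_{\max}$ arising from the two corrections are dominated by the ones claimed in the statement; there is no new analytic input beyond Lemmas~\ref{lem3.13} and~\ref{lem3.16} and the disjointness provided by Lemma~\ref{lemmajdis}. Once one has chosen to bound $J_w(Q_2)$ by $J_w$ (which is where convergence $K>R^2(d-1)$ is used) and to bound $\grS$ crudely by the trivial Lemma~\ref{lem3.13} estimate, the rest is algebraic simplification.
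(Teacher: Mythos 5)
Your proposal is correct and follows exactly the route the paper intends: the paper gives no explicit proof beyond the remark that the result follows from the preceding major-arc lemma "according to Lemma~\ref{lem3.13} and Lemma~\ref{lem3.16}", and your decomposition $\grS(Q_1)J_w(Q_2)=\grS J_w-(\grS-\grS(Q_1))J_w(Q_2)-\grS(J_w-J_w(Q_2))$ with the stated tail bounds is precisely that completion argument. Your exponent bookkeeping checks out (the second correction reproduces the last error term exactly, and the first is dominated by it using $K>R^2(d-1)$ and $\Vert\bfg^\natural\Vert\geq 1$).
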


Theorem \ref{thm:master2} is now a consequence of the major arc analysis in Lemma \ref{lem3.17} in combination with the minor arc analysis from Lemma \ref{lem3.7}. For this, we choose $\tet$ by
$$P_{\max}^\tet = \Vert \bfg\Vert^{-\frac{1}{2R(d-1)+1}}\Vert \bfg^\sharp\Vert^{-\frac{3R}{3R(d-1)+1}} P_{\max}^{\frac{1}{4R(R+1)d}}.$$
Then we clearly have $0<\tet<1$ and equation (\ref{eqntet}) reduces to the assumption (\ref{cond:master}). Moreover, one quickly sees that with this choice of $\tet$ both of the conditions (\ref{majdis}) and (\ref{majapprox}) are satisfied. It remains to understand that the error terms in Lemma \ref{lem3.17} and Lemma \ref{lem3.7} are both majorised by the error term in Theorem \ref{thm:master2}. We bound the first error term in Lemma \ref{lem3.17} by
\begin{equation*}
\begin{split}
%&\Vert\bfg^\natural\Vert^{2R^2+R}\Vert\bfg\Vert(|\bfz|+1) 
%\widetilde{\b{P}}
%P_{\min}^{-R-1}P_{\max}^{-R(d-1)+(2R^2+2R)(d-1)\tet} \\ \ll &
\Vert\bfg^\sharp\Vert^{2R^2+R}\Vert\bfg\Vert \widetilde{\b{P}} \left(\frac{P_{\max}}{P_{\min}}\right)^R P_{\max}^{-Rd-1}P_{\max}^{(2R^2+2R)d\tet}  \ll  \widetilde{\b{P}} \left(\frac{P_{\max}}{P_{\min}}\right)^R P_{\max}^{-Rd-1/2}.
\end{split}
\end{equation*}
Note that the last error term in Lemma \ref{lem3.17} as well as the second error term in Lemma \ref{lem3.7} are bounded by 
$$\Vert\bfg^\natural\Vert^ 
{\frac{2K}{d-1}-R}
\Vert\bfg\Vert^
{\frac{K-R^2(d-1)}{2R(d-1)}}  
\widetilde{\b{P}}^{1+\eps}
\left(P_{\max}/P_{\min}\right)^{R+K} 
P_{\max}^{-Rd-\frac
{K-R(R+1)(d-1)}
{4R(R+1)d} 
}.
$$
The first error term in Lemma \ref{lem3.7} is also present in the statement of Theorem \ref{thm:master2}.

Lastly let us remark that it is a well-known fact that the singular series factorises as
$\grS=\prod_p\sigma_p(\bfg)$,
where for any prime $p$ we
have
\[\sigma_p(\bfg):=
\lim_{l\rightarrow \infty} p^{-l(n-R)} \sharp
\big\{
1\leq \bfx \leq p^l : p^l| \bfg(\b{x})
\big\}
.\]
\section{Local densities}
\label{s:localden}
Throughout this section we will have $R$ forms 
of degree $d> 1$,
\[
f_1,\ldots,f_R
\in \Z[x_1,\ldots,x_n]
\]
and we will always assume that the Birch rank satisfies
\[
\mathfrak{B}(\bff)>2^{d-1}(d-1)R(R+1)
.\]
For a prime $p$
and a vector $\bfj=(j_1,\ldots,j_n) \in (\Z_{\geq 0})^n$
we shall be concerned with bounding the quantities
\[\delta(\bfj):=
\lim_{l\rightarrow \infty} p^{-l(n-R)} \sharp
\big\{
1\leq x_1,\ldots,x_n \leq p^l : p^l| \bff(p^{j_1}x_1,\ldots,p^{j_n}x_n)
\big\}
,\]
these estimates will be applied later towards the proof of Theorems~\ref{thm:mainvector},~\ref{thm:rosiwa} 
and~\ref{thm:weighted}.
We suppress the letter
$p$ from the notation for $\delta$ to make the presentation easier to follow.
The forms $\bff$ will be considered constant, however the prime $p$ and the vector $\bfj$ will not,
thus we shall require uniformity of our bounds with respect to $p$ and $\bfj$.
For later applications we only have to consider all big enough primes $p>z_0$,
where $z_0$ is a constant depending at most on the coefficients of $\bff$ and $n,d,R$.
This constant will be enlarged, if needed,
with no further comment.
Let us emphasize that the entities $\delta(\bfj)$ encode
the probability of the events
$$p^{j_1}|x_1,\ldots,p^{j_n}|x_n$$
as $\b{x}\in \Z^n$ sweeps through the zeros of $\bff=\b{0}$,
therefore, they are intimately connected with certain closed subvarieties of
$\b{f}=\b{0}$. This is manifested even in the most simple of situations: for
a primitive integer zero of $x_1x_2=x_3^2$ and a prime $p|x_3$ we always have $p^2|x_1$ or $p^2|x_2$ as a result of the subvariety $x_1x_2=x_3^2, x_3=0$
being reducible. We shall give geometric conditions that prevent $\delta(\bfj)$ 
to attain large values for general systems $\bff=\b{0}$.

For every $\bfj \in \{0,1\}^n$ we define the system $\b{f}^{\bfj}=\b{0}$
of $R$ forms  in $n-|\b{j}|_1$ variables via
$$f_\xi^{\bfj}(\bfx)=f_\xi(x_1,\ldots, x_n)|_{x_i=0 \mbox{ if } j_i=1},
\
 \xi \in \N\cap [1,R].$$
We later need a lower bound for the Birch rank of the new systems, as for example obtained in~\cite[Lem.3]{CM}.
As there is a slight oversight in the proof of \cite[Lem.3]{CM}, we give here the statement and proof of the corrected lemma where the quantity $R$ in \cite[Lem.3]{CM} is replaced by $R+1$.

\begin{lemma}\label{e:ccmm}
One has
\begin{equation}
\label{e:ccmm}
\mathfrak{B}(\bff^{\bfj})\geq \mathfrak{B}(\bff) - (R+1)|\b{j}|_1.
\end{equation}
It is important to note here that we view $\bff^{\bfj}$ as a system of $R$ equations in $n-|\bfj|_1$ variables.
\end{lemma}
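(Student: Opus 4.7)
The plan is to reduce to the case $|\bfj|_1=1$ by induction on $|\bfj|_1$, since the general case then follows by iterating, each step losing at most $R+1$ from the Birch rank. In the base case, after reordering coordinates we may assume $\bfj=\bfe_n$, so that $\bff^{\bfj}(y_1,\ldots,y_{n-1})=\bff(y_1,\ldots,y_{n-1},0)$. Let $V^*\subset\A^n$ and $V^{*,\bfj}\subset\A^{n-1}$ denote the affine loci where the Jacobians of $\bff$ and $\bff^{\bfj}$ have rank less than $R$. The claim reduces to $\dim V^{*,\bfj}\leq \dim V^*+R$, since then
\[
\mathfrak{B}(\bff^{\bfj})=(n-1)-\dim V^{*,\bfj}\geq (n-1)-\dim V^*-R=\mathfrak{B}(\bff)-(R+1).
\]

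To prove the dimension estimate, I would introduce the incidence varieties
\[
\mathcal{I}:=\{(\bfx,[\bfv])\in \A^n\times\P^{R-1}:\bfv^{T}J_{\bff}(\bfx)=\b{0}\},\qquad \mathcal{I}^{\bfj}:=\{(\bfy,[\bfv])\in \A^{n-1}\times\P^{R-1}:\bfv^{T}J_{\bff^{\bfj}}(\bfy)=\b{0}\},
\]
together with the auxiliary variety
\[
\mathcal{J}:=\bigl\{(\bfx,[\bfv])\in \A^n\times\P^{R-1}:\textstyle\sum_{i=1}^R v_i\,\partial f_i/\partial x_k(\bfx)=0 \text{ for all } 1\leq k\leq n-1\bigr\}.
\]
The first-factor projection $\mathcal{I}\to V^*$ is surjective with fibre over $\bfx\in V^*$ equal to the projective space $\P^{R-1-\rank J_\bff(\bfx)}$, whose dimension is at most $R-1$; hence $\dim \mathcal{I}\leq \dim V^*+(R-1)$. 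The analogous projection gives $\dim V^{*,\bfj}\leq \dim \mathcal{I}^{\bfj}$.

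The key comparison step is that $\mathcal{I}$ is obtained from $\mathcal{J}$ by imposing the single extra equation $\sum_i v_i\,\partial f_i/\partial x_n=0$, so $\dim \mathcal{J}\leq \dim \mathcal{I}+1$; and under the identification $\A^{n-1}\simeq\{x_n=0\}\subset\A^n$, together with the identity $\partial f_i^{\bfj}/\partial y_k=(\partial f_i/\partial x_k)|_{x_n=0}$ for $k<n$, the variety $\mathcal{I}^{\bfj}$ equals $\mathcal{J}\cap\{x_n=0\}$, so $\dim \mathcal{I}^{\bfj}\leq \dim \mathcal{J}$. Chaining the four inequalities gives
\[
\dim V^{*,\bfj}\leq \dim \mathcal{I}^{\bfj}\leq \dim \mathcal{J}\leq \dim \mathcal{I}+1\leq \dim V^*+R,
\]
which closes the base case.

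The main subtlety is the bound $\dim \mathcal{I}\leq \dim V^*+(R-1)$: although one expects the projection $\mathcal{I}\to V^*$ to be generically finite (fibres are single points on the open locus where the Jacobian has rank exactly $R-1$), some components of $V^*$ could carry a larger rank-drop, making fibres as big as $\P^{R-1}$. This worst-case fibre is what is ultimately responsible for the loss of $R+1$ per eliminated variable rather than the $2$ that the generic-rank heuristic would suggest, and the incidence framework is needed precisely because it absorbs both the $+1$ from the extra equation cutting $\mathcal{J}$ down to $\mathcal{I}$ and the worst-case $+R-1$ from the fibres. A naive direct comparison of $V^{*,\bfj}$ with $V^*$ by projection to $\{x_n=0\}$ is not sufficient: a point of $V^{*,\bfj}$ need not lift to $V^*$, since $J_\bff(\bfy,0)$ can still attain rank $R$ even when its first $n-1$ columns do not.
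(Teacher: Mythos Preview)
Your proof is correct and takes a genuinely different route from the paper's. The paper avoids both induction and incidence varieties: it embeds $\widetilde{V^{*,\bfj}}$ into $\P^{n-1}$ via the inclusion $\iota$ of the coordinate subspace $\{x_i=0:j_i=1\}$ and observes directly that
\[
\iota(\widetilde{V^{*,\bfj}})\cap \bigcap_{1\le\xi\le R}\ \bigcap_{i:\,j_i=1}\Bigl\{\partial f_\xi/\partial x_i=0\Bigr\}\ \subset\ \widetilde{V^*},
\]
since at a point of the left-hand side the columns of $J_\bff$ indexed by $\{i:j_i=1\}$ vanish while the remaining columns already have rank $<R$. Cutting by $R|\bfj|_1$ hypersurfaces in projective space then gives $\dim\widetilde{V^{*,\bfj}}-R|\bfj|_1\le\dim\widetilde{V^*}$ in one stroke. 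Your incidence-variety argument is more elaborate but equally valid; it makes the ``$+R$'' visible as the worst-case fibre dimension of $\mathcal{I}\to V^*$ rather than as a count of additional equations, which is a nice conceptual gain. One small point you should justify: the inequality $\dim\mathcal{J}\le\dim\mathcal{I}+1$ requires the hypersurface $\{g=0\}$ to meet every top-dimensional component $C$ of $\mathcal{J}$, and this is not automatic in $\A^n\times\P^{R-1}$. It holds here because the defining equations of $\mathcal{J}$ are homogeneous in $\bfx$, so each $C$ is invariant under $\lambda\cdot(\bfx,[\bfv])=(\lambda\bfx,[\bfv])$ and therefore contains a point with $\bfx=0$, where $g$ vanishes since $d\ge 2$.
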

\begin{proof}
Let $\widetilde{V^*}\subset \P_\C^{n-1}$ be the projective variety given by 
$$\rank \left(\frac{\partial f_\xi}{\partial x_i}\right)_{\xi,i}<R,$$
and note that this is well-defined as all of the polynomials $f_\xi$ are homogeneous. Then the Birch rank of $\bff$ is given by
$$\mathfrak{B}(\bff)=n-\dim (\widetilde{V^*})-1.$$
Similarly, let $\widetilde{V^{*,\bfj}}\subset \P_\C^{n-|\bfj|_1-1}$ be the projective variety given by 
$$\rank \left(\frac{\partial f_\xi^{\bfj}}{\partial x_i}\right)_{\xi,i}<R,$$
such that we have
$$\mathfrak{B}(\bff^{\bfj})=n-|\bfj|_1-\dim (\widetilde{V^{*,\bfj}})-1.$$
The variety $\widetilde{V^{*,\bfj}}$ naturally embeds into the linear subspace of $\P_\C^{n-1}$ given by $x_i=0$ for $j_i=1$. We write $\iota(\widetilde{V^{*,\bfj}})$ for this embedding. 
Then we observe that 
$$\iota(\widetilde{V^{*,\bfj}}) \cap \bigcap_{1\leq \xi\leq R} \bigcap_{\substack{1\leq i\leq n\\j_i=1}}\left\{\frac{\partial f_\xi}{\partial x_i}=0\right\}\subset \widetilde{V^*}.$$
Hence we obtain
$$\dim (\widetilde{V^{*,\bfj}}) - R|\bfj|_1\leq \dim (\widetilde{V^*} ) .$$
Finally, this implies
$$\mathfrak{B}(\bff^{\bfj})\geq n-|\bfj|_1-(\dim (\widetilde{V^*} ) + R|\bfj|_1)-1 = \mathfrak{B}(\bff) - (R+1)|\b{j}|_1.$$
\end{proof}

This is a convenient place to introduce the 
helpful notation
\[
\Theta(\bfj)
:=
\frac{\mathfrak{B}(\b{f}^\bfj)}{R (d-1)2^{d-1}}
\]
and $\Theta(\b{0})$
will be denoted by
$\Theta$. For non-negative integers $j_1,\ldots, j_n$, any prime $p$ 
and a vector $\b{x}$ we use the notation
\[
p^{\b{j}}|\b{x}
\iff
\ p^{j_i}|x_i, \forall 1\leq i\leq n
.\]
This enables us to introduce the densities 
$$\sig_p(p^{\b{j}}|\b{x})= \lim_{l\rightarrow \infty} p^{-l(n-R)} 
\sharp
\Big\{1\leq x_1,\ldots,x_n \leq p^l 
: p^l|
\bff(\bfx), 
p^{\bf{j}}|\bf{x}
\Big\}
$$
and from the definition of $\delta$
we infer that  
\[
\frac{\delta(\b{j})}
{
 p^{|\b{j}|_1}
}
= 
\sig_p(p^{\b{j}}|\b{x}).
\]
\begin{lemma}
\label{lem:triviallem}
Let $t,d$
be integers with $2\leq d <t$.
Then 
for each $\bfa \in (\Z/p^{t-d}\Z)^R$ with $p\nmid \bfa$
and any 
vector polynomial
$\b{g}
\in \Z[\b{x}]^R$
with $\max_{1\leq i\leq R}\deg(g_i) \leq d-1$
we have
\[
\sum_{\bfx \md{p^{t-1}}}
e_{p^{t}}\l(p^d
\b{a} \cdot \b{f}(\bfx)
+p\b{a} \cdot \b{g}(\bfx) 
\r)
\ll_\eps
p^{(t-1)(n-\frac{t-d}{t-1} \Theta
+\epsilon)}
,\]
where
the implied constant is independent of $p,t, \b{g}$ and $\bf{a}$.
\end{lemma}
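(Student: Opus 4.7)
My plan is to adapt the classical Weyl--Wooley bound to the present setting, exploiting the fact that the lower-order perturbation $\b{a}\cdot\b{g}$ is annihilated by the differencing procedure. First I would rewrite
\[
S=\sum_{\b{x}\md{p^{t-1}}} e_{p^{t-1}}\bigl(p^{d-1}\b{a}\cdot\b{f}(\b{x})+\b{a}\cdot\b{g}(\b{x})\bigr)
\]
and apply $(d-1)$ rounds of discrete Weyl differencing in the variable $\b{x}$. Since $\deg\b{g}\leq d-1$, after $(d-1)$ differencings the $\b{g}$-contribution becomes a phase constant in $\b{x}$ of unit modulus, which can be discarded. The $\b{f}$-contribution becomes a linear form in $\b{x}$ whose $j$-th coefficient is, up to a $p$-adic unit (once $z_{0}$ is enlarged so that $p>d$), equal to $p^{d-1}\b{a}\cdot\Phi(\b{h}^{(1)},\ldots,\b{h}^{(d-1)},\b{e}^{(j)})$, where $\Phi$ is the multilinear form of $\b{f}$ introduced in \S\ref{s:genebi}.

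Next I would evaluate the inner linear sum over $\b{x}\md{p^{t-1}}$ as a product of coordinate sums. Each coordinate factor equals $p^{t-1}$ if $p^{t-d}$ divides $\b{a}\cdot\Phi(\b{h}^{(1)},\ldots,\b{h}^{(d-1)},\b{e}^{(j)})$ and vanishes otherwise. Combined with the standard Weyl differencing factor $p^{(t-1)n(2^{d-1}-d)}$, this yields
\[
|S|^{2^{d-1}}\ll p^{(t-1)n(2^{d-1}-d+1)}\cdot N,
\]
where $N$ counts the tuples $(\b{h}^{(1)},\ldots,\b{h}^{(d-1)})\md{p^{t-1}}$ for which the divisibility holds for every $1\leq j\leq n$. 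Since the condition depends on $\b{h}$ only modulo $p^{t-d}$, periodicity gives $N=p^{(d-1)^{2}n}\cdot N'$, where $N'$ is the analogous count modulo $p^{t-d}$.

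The main step, and the point where the argument is most delicate, is bounding $N'$. The quantity $N'$ is precisely what one obtains by running the same Weyl differencing on the auxiliary sum $T(\b{a}):=\sum_{\b{y}\md{p^{t-d}}} e_{p^{t-d}}(\b{a}\cdot\b{f}(\b{y}))$. Since $p\nmid\b{a}$, the fraction $\b{a}/p^{t-d}$ is in lowest terms, which excludes the Diophantine-approximation alternative in the Birch--Weyl dichotomy underlying the proof of Lemma~\ref{expsum}. The remaining dimension alternative, combined with Birch's bound $\dim\mathcal{Y}\leq \dim V^{*}+(d-2)n$, yields
\[
N'\ll (p^{t-d})^{(d-1)n-\mathfrak{B}(\b{f})/(R(d-1))+\epsilon}.
\]
Substituting back and using the identity $(t-1)(2^{d-1}-d+1)+(d-1)^{2}+(t-d)(d-1)=(t-1)2^{d-1}$ collapses the exponent of $p$ in the bound for $|S|^{2^{d-1}}$ to $n(t-1)2^{d-1}-(t-d)\mathfrak{B}(\b{f})/(R(d-1))+O(\epsilon(t-1))$; taking the $2^{d-1}$-th root and using $\Theta=\mathfrak{B}(\b{f})/(R(d-1)2^{d-1})$ produces the claimed estimate. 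The dependence on $\|\b{f}\|$ from Birch's geometric input is absorbed into the implicit constant since $\b{f}$ is fixed throughout, so the resulting constant is indeed independent of $p$, $t$, $\b{a}$, and $\b{g}$, as required.
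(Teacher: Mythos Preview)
Your argument is correct and reaches the same bound, but it takes a different route from the paper.  The paper simply invokes Birch's Lemma~4.3 as a black box with $P=p^{t-1}$ and $\bfalp=\b{a}/p^{t-d}$, then checks that $\bfalp$ lies outside the major arcs $\grM(\eta)$ for $\eta=(t-d)/((t-1)R(d-1))-\eps$: the approximation inequality in alternative~(ii) forces $q'a_i=a_i'p^{t-d}$, whence $p^{t-d}\mid q'$ (since $p\nmid\b{a}$), contradicting the size constraint $q'\leq p^{(t-1)R(d-1)\eta}<p^{t-d}$.  You instead unpack the Weyl differencing explicitly, use periodicity of the multilinear form to drop the modulus from $p^{t-1}$ to $p^{t-d}$, and then invoke the same dichotomy at the smaller scale to bound $N'$.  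The two computations are equivalent in content---your periodicity reduction is exactly what accounts for the factor $(t-d)/(t-1)$ in the paper's choice of $\eta$---but the paper's version is a three-line citation while yours re-derives the mechanics.  Note that bounding $N'$ genuinely requires opening up the proof of Birch's lemma (the shrinking step plus the dimension estimate $\dim\calY\leq\dim V^*+(d-2)n$), since the inequality $|T|^{2^{d-1}}\ll P^{n(2^{d-1}-d+1)}N'$ only upper-bounds $|T|$ in terms of $N'$, not conversely; your text is a little terse on this point.  Finally, your remark ``once $z_0$ is enlarged so that $p>d$'' handles the combinatorial factor $d!$ from differencing but, strictly, the lemma is stated for all primes; the paper's black-box approach avoids this issue, and in any case the applications downstream only need $p>z_0$.
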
 
\begin{proof} 
We shall use~\cite[Lem.4.3]{Bir62}
with $P=p^{t-1}$ and $\boldsymbol{\alpha}=p^{-t+d} \b{a}$; 
in doing so we observe that lower degree polynomials leave
the strength of the bounds in~\cite{Bir62} unaffected.
Recall that the constant $K$ in~\cite[Eq.(8)]{Bir62}
is given via $\mathfrak{B}(\bff)/2^{d-1}$.
Our aim is to acquire
a constant $\eta>0 
$, as large as possible,
such that 
$\boldsymbol{\alpha} \notin \c{M}(\eta)$,
where
$\c{M}(\eta)$ is
given in~\cite[Sect.4,Eq.(5)]{Bir62}.
This would then imply 
that the sum in our lemma is 
\[
\ll 
p^
{(t-1)
(n-
\frac{\mathfrak{B}(\bff)\eta}{2^{d-1}}
+\eps
)}
.
\]
The assumption
$\boldsymbol{\alpha} \in \c{M}(\eta)$
provides
non-negative
 integers
 $q',a_1',\ldots,a_R'$
fulfilling 
$$\gcd(a_1',\ldots,a_R',q')=1,
\
1\leq q' \leq p^{(t-1)R(d-1)\eta}$$
and such that for all $i=1,\ldots,R$ 
the succeeding inequality is valid,
\begin{equation}
\label{eq:ineq0}
2|q'a_i
-a_i' p^{t-d}
|\leq
p^
{t-d
+
(t-1)(-d+R(d-1)\eta)}
.\end{equation}
As explained in~\cite[Lem.4.1]{Bir62},
we need to assume $2R(d-1)\eta <d$ in order 
to ensure that 
the major arcs are
disjoint.  
It is straightforward
to infer
that this condition is met 
upon choosing 
$$\eta := \eta(\eps)= \frac{t-d}{(t-1)R(d-1)}-\eps$$
for any
small enough $\eps>0$.
Furthermore, this choice of $\eta$ makes the exponent of $p$ in~\eqref{eq:ineq0}
non-positive,
thus giving birth to the equalities
$q'a_i=a_i' p^{t-d}$ for all $i$.
In particular, we obtain
$p^{t-d}=q'\leq p^{(t-1)R(d-1)\eta}$, thus
$
t-d 
\leq 
(t-1)R(d-1)\eta
$,
which constitutes a violation to the
the definition of $\eta$.
\end{proof}
For $\bfj \in \{0,1\}^n$, $c \in \N$ and any prime $p$
define 
$$E(p^c;\bfj):=\sharp\Big\{
\bfx \md{p^c}: \bff(\bfx)\equiv \mathbf{0}\md{p^c},
p^{j_i}| x_i \ \forall i
\Big\}.$$
This quantity is intimately related to the geometry of $\bff^{\bfj}=\b{0}$
and we begin by using it to approximate $\delta(\b{j})$. 
\begin{lemma}\label{lem4}
Let $\bfj
\in \{0,1\}^n$ 
and assume that
$\Theta>R$.
Then there is some $z_0>0$, such that for $p>z_0$ and each sufficiently small
$\eps>0$,
we have  
$$
\delta(\bfj)   
=p^{d(R-n)+|\bfj|_1}E (p^d;\bfj)
+O(p^{
-\Theta
+R(d+1)
+\eps
})
,$$
where the implied constant depends at most on
$\b{f}$.
\end{lemma}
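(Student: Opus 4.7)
The plan is to expand both $\delta(\bfj)$ and $p^{d(R-n)+|\bfj|_1}E(p^d;\bfj)$ by additive orthogonality and bound the resulting difference using Weyl-type exponential sum estimates. Writing $\mathbf{1}[p^l\mid m]=p^{-lR}\sum_{\bfa\md{p^l}}e_{p^l}(\bfa\cdot m)$ in the definition of $E(p^l;\bfj)$ and stratifying the $\bfa$-sum by the $p$-adic level $t\in\{0,\ldots,l\}$ (namely $\bfa=p^{l-t}\bfb$ with $p\nmid\bfb$), one obtains after a short manipulation
\[
p^{-l(n-R)}E(p^l;\bfj)=p^{-|\bfj|_1}+\sum_{t=1}^{l}p^{-tn}T(p^t,\bfj),\qquad T(p^t,\bfj):=\sum_{\substack{\bfb\md{p^t}\\ p\nmid\bfb}}\;\sum_{\substack{\bfx\md{p^t}\\ p^{\bfj}\mid\bfx}}e_{p^t}\bigl(\bfb\cdot\bff(\bfx)\bigr).
\]
Multiplying by $p^{|\bfj|_1}$ and letting $l\to\infty$ recovers $\delta(\bfj)$, whereas truncating at $l=d$ recovers $p^{d(R-n)+|\bfj|_1}E(p^d;\bfj)$. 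Subtracting, the target error reduces to $p^{|\bfj|_1}\sum_{t=d+1}^{\infty}p^{-tn}T(p^t,\bfj)$.

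Next I would estimate $T(p^t,\bfj)$ for $t\geq d+1$. Fourier-expanding the indicator of $p^{\bfj}\mid\bfx$ via additive characters modulo $p^{j_i}$ converts the inner sum into an average of Weyl sums
\[
\sum_{\bfx\md{p^t}}e_{p^t}\bigl(\bfb\cdot\bff(\bfx)+\textstyle\sum_i p^{t-j_i}a_ix_i\bigr),
\]
whose linear perturbation is invisible to Weyl differencing. For $\bfb$ with $p\nmid\bfb$, Lemma~\ref{lem:triviallem} (or Lemma~\ref{expsum} applied to the homogeneous piece $\bfb\cdot\bff$) bounds each such sum by $\ll p^{t(n-\Theta+\eps)}$. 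The $p^{|\bfj|_1}$ auxiliary characters cancel the normalising factor $p^{-|\bfj|_1}$, and the remaining sum over $\bfb\md{p^t}$ (at most $p^{tR}$ terms) produces $|T(p^t,\bfj)|\ll p^{t(n+R-\Theta+\eps)}$.

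Since the Birch rank hypothesis translates into $\Theta>R+1>R$, the tail is a convergent geometric series dominated by its first term:
\[
p^{|\bfj|_1}\sum_{t\geq d+1}p^{-tn}|T(p^t,\bfj)|\;\ll\;p^{|\bfj|_1}\sum_{t\geq d+1}p^{t(R-\Theta+\eps)}\;\ll\;p^{|\bfj|_1+(d+1)(R-\Theta)+\eps},
\]
and absorbing $|\bfj|_1\leq n$ into the implicit constant (which is allowed to depend on $\bff$, and hence on $n$) yields the asserted bound $O(p^{-\Theta+R(d+1)+\eps})$. The main obstacle is uniformity in the divisibility constraint: a naive substitution $\bfx=\langle p^{\bfj}\bfy\rangle$ forces one to work with the restricted form $\bff^{\bfj}$, whose Birch rank $\Theta(\bfj)$ (controlled by Lemma~\ref{e:ccmm}) can be much smaller than $\Theta$, so performing the Fourier expansion of $\prod_i\mathbf{1}[p^{j_i}\mid x_i]$ \emph{before} invoking Weyl's inequality is essential in order to apply the full strength of the bound governed by $\mathfrak{B}(\bff)$.
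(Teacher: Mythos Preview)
Your overall strategy—orthogonality expansion, isolation of the tail $t\geq d+1$, and Weyl-type estimation—matches the paper's, and your computation is correct up to and including the bound
\[
p^{|\bfj|_1}\sum_{t\geq d+1}p^{-tn}|T(p^t,\bfj)|\ \ll\ p^{|\bfj|_1+(d+1)(R-\Theta)+\eps}.
\]
The gap is in the last sentence: you cannot ``absorb $|\bfj|_1\leq n$ into the implicit constant''.  The quantity $|\bfj|_1$ sits in the \emph{exponent} of $p$, so $p^{|\bfj|_1}$ is not $O_\bff(1)$ as $p\to\infty$.  Comparing exponents, your bound exceeds the asserted one by the factor $p^{|\bfj|_1-d\Theta}$, and the lemma is stated for every $\bfj\in\{0,1\}^n$ with only the hypothesis $\Theta>R$; nothing prevents $|\bfj|_1>d\Theta$ (indeed $d\Theta\leq dn/(R(d-1)2^{d-1})<n$ whenever $d\geq 3$ or $R\geq 2$).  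So for large $|\bfj|_1$ your error term is genuinely weaker than what is claimed.

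The fix is precisely what the paper does and what your last paragraph almost anticipates: instead of Fourier-expanding $\prod_i\mathbf{1}[p^{j_i}\mid x_i]$, write $\bfx=\bfy+p\bfh$ with $\bfy\in(\Z/p\Z)^n$ and $\bfh\in(\Z/p^{t-1}\Z)^n$.  The divisibility constraint becomes $p^{\bfj}\mid\bfy$, costing only $p^{n-|\bfj|_1}$ choices for $\bfy$, while $\bff(\bfy+p\bfh)=p^d\bff(\bfh)+\bff(\bfy)+p\bfg_\bfy(\bfh)$ has leading part governed by the \emph{full} form $\bff$ (not $\bff^{\bfj}$), so Lemma~\ref{lem:triviallem} applies to the free $\bfh$-sum.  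This yields $W_{\bfa,p^t}(p^{\bfj}\mid\bfx)\ll p^{tn-|\bfj|_1-(t-d)\Theta+\eps}$; the saving $p^{-|\bfj|_1}$ coming from the $\bfy$-count exactly cancels the $p^{|\bfj|_1}$ prefactor and gives the stated error $O(p^{-\Theta+R(d+1)+\eps})$ uniformly in $\bfj$.  (Incidentally, Lemma~\ref{lem:triviallem} as stated applies to sums of the shape $\sum_{\bfx\md{p^{t-1}}}e_{p^t}(p^d\bfa\cdot\bff+\ldots)$, not to your unconstrained sum $\sum_{\bfx\md{p^t}}e_{p^t}(\bfb\cdot\bff+\text{linear})$; for the latter the correct reference is Lemma~\ref{expsum}.)
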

\begin{proof}
For $t\geq 1$, $\b{j} \in \{0,1\}^n$
and any $\b{a} \in \Z^R$ we 
bring into play the entities
$$
W_{\bfa,p^t}(p^{\b{j}}|\b{x})
:=\sum_{\substack{\bfx \mod{p^t}\\ p^{\b{j}}|\b{x}}}
e_{p^t}(\bfa\cdot \bff(\bfx))
\ \text{ and } \
G(\bfj;p^t)
:=
p^{-tn}
\Osum_{\bfa \mod{p^t}}
W_{\bfa,p^t}(p^{\b{j}}|\b{x})
,$$
where the summation $\Osum_{\!\!\!\b{a}\md{q}}$ is over vectors 
$\b{a} \in (\Z/q\Z)^R$ with $\gcd(\b{a},q)=1$.
We have 
\begin{equation*}
\begin{split}
\delta(\bfj) p^{-|\bfj|_1}
&= 
\lim_{l\rightarrow \infty}p^{-l(n-R)}
\sum_{\bfa \mod{p^l}}\frac{1}{p^{Rl}}
\sum_{\substack{\bfx \mod{p^l}\\ p^{\b{j}}|\b{x}}}
e_{p^l}(\bfa\cdot \bff(\bfx))\\ 
&=
\lim_{l\rightarrow \infty}
\sum_{\bfa \mod{p^l}}p^{-ln}
\sum_{\substack{\bfx \mod{p^l}\\ p^{\b{j}}|\b{x}}}
e_{p^l}(\bfa \cdot \bff(\bfx))\\
&= \lim_{l\rightarrow \infty}\left(\sum_{t=1}^l G(\bfj;p^t) + p^{-ln}\sharp\{\bfx\mod {p^l}: p^\bfj|\bfx\}\right)\\
&= p^{-|\bfj|_1}+ \lim_{l\rightarrow \infty}\sum_{t=1}^l G(\bfj;p^t),
\end{split}
\end{equation*}
whence
\begin{equation}
\label{eq:explicitBW}
\delta(\bfj)=1
+
p^{|\b{j}|_1}
\sum_{t=1}^\infty
G(\bfj;p^t).
\end{equation}
Observe that for each form $F \in \Z[\b{x}]$,
any prime $p$ and any fixed
integer vector $\bfy$
there exists an integer  polynomial $F_\bfy \in \Z[\bfx]$ 
of degree strictly smaller than $\deg(F)$,
such that  
\[
F(\bfy+p\bfx)=
p^{\deg(F)}F(\bfx)
+F(\bfy)
+pF_\bfy(\bfx)
.\]
Hence,
if $t\geq d+1$,
this
allows us to rewrite the exponential sum $W_{\bfa,p^t}(p^{\b{j}}|\b{x})$
as
\begin{align*} 
&\sum_{\substack{\bfy  \in (\N\cap [1,p])^n\\p^{\b{j}}|\b{y}}} 
\sum_{\bfh  \in (\N\cap [1,p^{t-1}])^n}  
e_{p^t}(\bfa\cdot \bff(\bfy+p\bfh))\\
=
&\sum_{\substack{\bfy  \in (\N\cap [1,p])^n\\p^{\b{j}}|\b{y}}}
e(p^{-t}\bfa\cdot \bff(\bfy))
\sum_{\bfh  \in (\N\cap [1,p^{t-1}])^n} 
e(p^{d-t}\bfa\cdot \bff(\bfh)+ p^{-t+1}\bfa\cdot \bfg_\bfy(\bfh)),
\end{align*}
where the polynomials $\bfg_\bfy(\bfh)$ have degree strictly smaller
than $d$ in $\bfh$. 
Invoking
Lemma~\ref{lem:triviallem}
endows
us
with
the following bound for the inner sum over $\b{h}$,
$$
\ll p^{(t-1)(n+\eps)-(t-d)\Theta},
$$
where
the implicit constant is
independent of $p, t, \bfy$ and $\bfa$. 
Hence, for $t>d$ we deduce that 
\begin{equation*}
\begin{split}
W_{\bfa,p^t}(p^{\bfj}|\bfx)
\ll p^{t(n+\eps)-|\b{j}|_1-(t-d)\Theta}
,\end{split}
\end{equation*}
thereby procuring the validity of 
\begin{equation*}
\begin{split} 
\sum_{t=d+1}^\infty |G(\bfj;p^t)|
\ll p^{-|\b{j}|_1+d\Theta}
\sum_{t=d+1}^\infty
p^{-t(\Theta-R-\eps)}
.\end{split}
\end{equation*}
Our assumption $R<\Theta$ shows that for each $0<\eps < (\Theta-R)/2$ 
the sum over $t$ has the value
\[
\frac{p^{-(d+1)(\Theta-R-\eps)}}
{1-p^{-(\Theta-R-\eps)}}
\]
and increasing the value of $z_0$ to ensure that 
$z_0^{(\Theta-R)/2} \geq 2$
shows that 
\[
\sum_{t=d+1}^\infty |G(\bfj;p^t)|
\ll p^{-|\b{j}|_1
-\Theta 
+R(d+1)
+\eps(d+1)
}
.\] 
To control the contribution of the terms with $t \leq d$ 
we
note that
$$p^{-|\b{j}|_1}+\sum_{t=1}^d G(\bfj;p^t) = p^{d(R-n)} 
\sharp\Big\{\bfx \mod{p^d}: \bff(\bfx)\equiv \mathbf{0} \mod{p^d},
p^{\b{j}}|\b{x}
\Big\},$$
thus concluding
our proof. 
\end{proof} 
Observe that, at least when $|\bfj|_1$ is relatively small, the quantity $E(p^d;\bfj)$ regards the
number of zeros $\md{p}$ of a variety in sufficiently many variables; thus the estimates of Birch yield
the required estimation of $E(p^d;\bfj)$.
\begin{lemma}\label{lem5}
Let $\bfj\in \{0,1\}^n$ and assume that 
$\Theta(\bfj) >  R$ is fulfilled.
Then  
for all $\eps >0$ and primes $p>z_0$ we have 
$$E(p^d;\bfj)=p^{d(n-R)-|\b{j}|_1}+O_\eps\left(p^{d(n-R)-|\b{j}|_1-(\Theta(\bfj)-R)+\eps}\right),$$
with an implicit constant that is independent of $p$.
\end{lemma}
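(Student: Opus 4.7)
The plan is to combine the orthogonality identity already employed in the proof of Lemma~\ref{lem4} with a Weyl-type bound tailored to the restricted system $\bff^{\bfj}$. Specialising that identity at $l=d$ gives
\[
E(p^d;\bfj) = p^{d(n-R)}\Big(p^{-|\bfj|_1} + \sum_{t=1}^d G(\bfj; p^t)\Big),
\]
so the contribution $p^{-|\bfj|_1}$ already produces the main term $p^{d(n-R)-|\bfj|_1}$, and the whole argument reduces to verifying the estimate $\sum_{t=1}^d |G(\bfj; p^t)| \ll p^{-|\bfj|_1 -(\Theta(\bfj) - R)+\eps}$.

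For each $1 \leq t \leq d$ and each $\bfa \md{p^t}$ with $\gcd(\bfa, p^t) = 1$, I would perform the substitution $x_i = p\,\tilde{x}_i$ at every coordinate with $j_i = 1$, so that $\tilde{\b{x}}$ ranges modulo $p^{t-1}$ while $\b{y}$ ranges modulo $p^t$. Noting that
\[
\bff(\b{y}, p\tilde{\b{x}}) = \bff^{\bfj}(\b{y}) + p\,\b{r}(\b{y}, \tilde{\b{x}})
\]
for some integer polynomial $\b{r}$, the additive character factorises as
\[
e_{p^t}\big(\bfa\cdot\bff(\b{y}, p\tilde{\b{x}})\big) = e_{p^t}\big(\bfa\cdot\bff^{\bfj}(\b{y})\big)\, e_{p^{t-1}}\big(\bfa\cdot\b{r}(\b{y}, \tilde{\b{x}})\big).
\]
Bounding the inner sum over $\tilde{\b{x}} \md{p^{t-1}}$ trivially by $p^{(t-1)|\bfj|_1}$ reduces matters to estimating the Weyl-type sum $\sum_{\b{y} \md{p^t}} e_{p^t}(\bfa\cdot\bff^{\bfj}(\b{y}))$.

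Since the Birch rank of $\bff^{\bfj}$ equals $R(d-1)2^{d-1}\Theta(\bfj)$, Birch's Weyl inequality (applied exactly as in Lemma~\ref{lem:triviallem}) yields the bound $p^{t(n-|\bfj|_1 - \Theta(\bfj)) + \eps t}$, provided that $\bfa/p^t$ avoids the associated major arcs with parameter $\eta < 1/(R(d-1))$. The coprimality condition $\gcd(\bfa, p^t) = 1$ guarantees precisely this: any major-arc representative would force $p^t \mid q$ for some denominator $q \leq p^{tR(d-1)\eta}$, contradicting $\eta < 1/(R(d-1))$. Combining these two estimates with the trivial count $\#\{\bfa : \gcd(\bfa, p^t) = 1\} \leq p^{tR}$ and the normalising factor $p^{-tn}$ in the definition of $G(\bfj; p^t)$ produces
\[
|G(\bfj; p^t)| \ll p^{-|\bfj|_1}\, p^{-t(\Theta(\bfj) - R - \eps)}.
\]

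Because $\Theta(\bfj) > R$, summing in $t$ gives a geometric series dominated by its $t=1$ term (for $p > z_0$), and multiplying by the outer $p^{d(n-R)}$ reproduces the stated error bound. The main technical hurdle is the Weyl-inequality step---verifying that $\bfa/p^t$ lies in the minor arcs with respect to $\bff^{\bfj}$ and extracting the full rank saving $\Theta(\bfj)$---but this is essentially the same argument used in Lemma~\ref{lem:triviallem}, so the novelty here is only to orchestrate the inputs around the decomposition $\bff(\b{y}, p\tilde{\b{x}}) = \bff^{\bfj}(\b{y}) + p\,\b{r}(\b{y}, \tilde{\b{x}})$.
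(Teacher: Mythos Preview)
Your proposal is correct and follows essentially the same route as the paper. Both arguments fix the coordinates with $j_i=1$, observe that the top-degree part (in the remaining variables) of $\bff$ is exactly $\bff^{\bfj}$, apply Birch's Weyl bound to the resulting sum in the free variables (the lower-order terms from $p\,\b{r}$ being harmless), and then sum trivially over the restricted variables and over the levels $t\leq d$; the only cosmetic difference is that the paper slices by $\bfx'$ first and then decomposes by level $t$, whereas you invoke the $G(\bfj;p^t)$-identity from Lemma~\ref{lem4} first and then substitute $x_i=p\tilde{x}_i$.
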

\begin{proof} 
We initiate our argument
by
slicing the counting function $E(p^d;\bfj)$
along the variables which are divisible by $p$. 
Let $I=\{1\leq i\leq n: j_i =1\}$ 
and
for  
$\bfx'=(x_i)_{i\in I}\in (\Z/p^d\Z)^{|I|}$ we define
$$E(p^d;\bfj;\bfx'):=\sharp\Big\{
x_i \mod{p^d}, i\notin I: 
\bff(\bfx)\equiv \mathbf{0}\mod{p^d}\Big\}.$$
We rewrite this counting function with exponential sums as follows,
\begin{equation*}
\begin{split}
E(p^d;\bfj;\bfx')
= p^{d(n-|\b{j}|_1)-dR}+
p^{-dR}
\sum_{t=1}^d 
p^{(n-|\b{j}|_1)(d-t)}
\Osum_{\bfa \md{p^t}}  
\
\
\sum_{\substack{x_i\md{p^t}\\
i \notin I
}}
e_{p^t}(\bfa \cdot \bff(\bfx)).
\end{split}
\end{equation*}
Note that the degree $d$ part of the polynomial $\bff(\bfx)$ when viewed as a polynomial in the variables 
$x_i, 
i \notin I$,
is $\bff^{\bfj}(\bfx)$. 
We now apply~\cite[Lem.5.4]{Bir62},
the strength of which is unaffected by lower degree polynomials,
to obtain for any $\eps>0$ and uniformly for all $p>z_0$,
$$ \sum_{\substack{x_i\mod{p^t}\\
i \notin I
}}
e_{p^t}(\bfa\cdot \bff(\bfx))\ll_\eps 
p^{
t(n-|\b{j}|_1 - \Theta(\bfj))
+\eps
}.$$
We use this to estimate
$E(p^d;\bfj;\bfx')$ as follows,
\begin{equation*}
\begin{split}
E(p^d;\bfj;\bfx')-p^{d(n-|\b{j}|_1-R)}
&\ll_\eps p^{d(n-|\b{j}|_1-R)+\eps}
\sum_{t=1}^d p^{t(R-\Theta(\bfj))} 
\\&\ll_{\eps} p^{d(n-|\b{j}|_1-R)-(\Theta(\bfj)-R-\eps)}.
\end{split}
\end{equation*} 
We can now evaluate $E(p^d;\bfj)$ as
\[
\sum_{\substack{x_i\md{p^d}, i\in I \\p\mid x_i}}E(p^d;\bfj;\bfx')
=p^{dn-|\b{j}|_1-Rd}+O_\eps\l(p^{d(n-R)-|\b{j}|_1-(\Theta(\bfj)-R)+\eps}\r)
,\]
which concludes our proof. 
\end{proof}
Tying Lemmas~\ref{lem4} and~\ref{lem5} together
provides  the succeeding estimate.
\begin{corollary}
\label{cor1}
Assume that  
$\bfj \in \{0,1\}^n$,
$\min\{\Theta,\Theta(\bfj)\}>R$
and that $p$ is a prime in the range
$p>z_0$.
Then the following holds 
for each $\eps>0$ 
with an implied
constant depending only on $\bff$ and $\eps$,
$$
\delta(\bfj)
= 
1+O\!\l(
p^{R-\min\{\Theta-dR,\Theta(\bfj)\}+\eps}\r)
.$$
\end{corollary}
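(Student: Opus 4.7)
The strategy is direct: combine the two preceding lemmas by substituting the asymptotic for $E(p^d;\bfj)$ from Lemma~\ref{lem5} into the expression for $\delta(\bfj)$ furnished by Lemma~\ref{lem4}. Both hypotheses of these lemmas are met: we are assuming $\Theta>R$ and $\Theta(\bfj)>R$, and we are taking $p>z_0$ for a sufficiently large absolute constant $z_0$ depending only on $\b{f}$.

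First I would write, using Lemma~\ref{lem4},
\[
\delta(\bfj)=p^{d(R-n)+|\bfj|_1}E(p^d;\bfj)+O\bigl(p^{-\Theta+R(d+1)+\eps}\bigr),
\]
and note that the exponent in the error term equals $R-(\Theta-dR)+\eps$. Then, inserting the estimate of Lemma~\ref{lem5}, namely
\[
E(p^d;\bfj)=p^{d(n-R)-|\bfj|_1}+O\bigl(p^{d(n-R)-|\bfj|_1-(\Theta(\bfj)-R)+\eps}\bigr),
\]
the leading power cancels precisely the prefactor $p^{d(R-n)+|\bfj|_1}$, producing a main term equal to $1$, while the error contributes $O(p^{R-\Theta(\bfj)+\eps})$.

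Combining the two error terms yields
\[
\delta(\bfj)=1+O\bigl(p^{R-\Theta(\bfj)+\eps}\bigr)+O\bigl(p^{R-(\Theta-dR)+\eps}\bigr)
=1+O\bigl(p^{R-\min\{\Theta-dR,\,\Theta(\bfj)\}+\eps}\bigr),
\]
which is exactly the claim. There is no real obstacle here beyond bookkeeping of the exponents: the dependence on $|\bfj|_1$ cancels cleanly between the two lemmas, and the implied constants depend only on $\b{f}$ and $\eps$ by the corresponding uniformity statements already established in Lemmas~\ref{lem4} and~\ref{lem5}. The only point that requires a small check is that the choice of $z_0$ needed to invoke both lemmas simultaneously can be taken to be the maximum of the two (possibly enlarged) thresholds, which is permitted by the convention fixed at the start of this section.
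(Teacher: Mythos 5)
Your proposal is correct and is exactly the argument the paper intends: the paper simply states that Corollary~\ref{cor1} follows by ``tying Lemmas~\ref{lem4} and~\ref{lem5} together,'' and your substitution of the estimate for $E(p^d;\bfj)$ into the formula for $\delta(\bfj)$, with the exponent bookkeeping $-\Theta+R(d+1)=R-(\Theta-dR)$, is precisely that. The remarks on the cancellation of $|\bfj|_1$ and on enlarging $z_0$ to the maximum of the two thresholds are accurate and consistent with the conventions fixed in \S\ref{s:localden}.
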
  
Utilising~\eqref{e:ccmm} to find lower bounds for  $\Theta(\bfj)$
gives the following consequence of Corollary~\ref{cor1}.
\begin{corollary}
\label{cor2}
Assume that for some $\b{j}\in \{0,1\}^n$ 
we have 
\[
\mathfrak{B}(\b{f})
>
\max\big\{
(d-1)R^2
2^{d-1}
+(R+1)
|\b{j}|_1
,
(d^2-1)R^2
2^{d-1}
\big\}
.\] 
Then there exists $\lambda>0$ 
such that 
for all large enough primes $p>z_0=z_0(\bff)$,
we have 
$$
\delta(\bfj)
=1+O(p^{-\lambda})
,$$
with an implied
constant depending only on $\bff$.
\end{corollary}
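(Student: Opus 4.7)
The plan is to obtain Corollary~\ref{cor2} as a direct bookkeeping consequence of Corollary~\ref{cor1}, with the lower bound for $\Theta(\bfj)$ supplied by Lemma~\ref{e:ccmm} translating the two terms in the $\max$ of the hypothesis into the two inputs that Corollary~\ref{cor1} demands.

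First, I would unpack the hypothesis into the two analytic inequalities needed to invoke Corollary~\ref{cor1}. Recall $\Theta=\mathfrak{B}(\bff)/\bigl(R(d-1)2^{d-1}\bigr)$ and $\Theta(\bfj)=\mathfrak{B}(\bff^{\bfj})/\bigl(R(d-1)2^{d-1}\bigr)$. The assumption $\mathfrak{B}(\bff)>(d^2-1)R^2 2^{d-1}=(d-1)(d+1)R^2 2^{d-1}$ is equivalent to $\Theta>(d+1)R$, which rewrites as
\[
\Theta-dR>R.
\]
The assumption $\mathfrak{B}(\bff)>(d-1)R^2 2^{d-1}+(R+1)|\bfj|_1$ combined with the bound
\[
\mathfrak{B}(\bff^{\bfj})\geq \mathfrak{B}(\bff)-(R+1)|\bfj|_1
\]
of Lemma~\ref{e:ccmm} yields $\mathfrak{B}(\bff^{\bfj})>(d-1)R^2 2^{d-1}$, i.e.\ $\Theta(\bfj)>R$.

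Next, I would verify that the hypotheses of Corollary~\ref{cor1} are met. Since $d\geq 2$ we have $\Theta>(d+1)R>R$, and we just showed $\Theta(\bfj)>R$, so indeed $\min\{\Theta,\Theta(\bfj)\}>R$. Corollary~\ref{cor1} then provides
\[
\delta(\bfj)=1+O\!\left(p^{R-\min\{\Theta-dR,\,\Theta(\bfj)\}+\eps}\right)
\]
for every $\eps>0$ and every prime $p>z_0$.

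Finally, I would extract the decay rate. Both strict inequalities $\Theta-dR>R$ and $\Theta(\bfj)>R$ are strict, and $\bfj$, $d$, $R$ are fixed together with $\bff$, so there is a constant $2\lambda:=\min\{\Theta-dR,\Theta(\bfj)\}-R>0$ depending only on $\bff$. Choosing $\eps:=\lambda$ in the estimate above produces an exponent of $p$ equal to $-\lambda<0$, which yields $\delta(\bfj)=1+O(p^{-\lambda})$ uniformly in $p>z_0$, with the implied constant depending only on $\bff$. This completes the proof. The argument is essentially bookkeeping; the only subtle point is that the two clauses of the $\max$ in the hypothesis correspond exactly to the two quantities being minimised inside Corollary~\ref{cor1}, so no additional ideas beyond those already developed are required.
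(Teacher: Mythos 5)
Your proposal is correct and is exactly the argument the paper intends: the paper derives Corollary~\ref{cor2} from Corollary~\ref{cor1} by using Lemma~\ref{e:ccmm} to lower-bound $\Theta(\bfj)$, and your unpacking of the two clauses of the $\max$ into $\Theta-dR>R$ and $\Theta(\bfj)>R$, followed by choosing $\eps$ small enough, is precisely that bookkeeping. No issues.
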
  
We can see that the bound  $\delta(\bfj)\ll 1$ fails when $|\bfj|_1$ approaches $n$ hence the assumption $\Theta(\bfj)>R$ of
Corollary~\ref{cor1} is no longer applicable.
Indeed, a moment's thought reveals that 
$\delta(1,\ldots,1)=p^{dR}\sigma_p$
and that whenever 
$h_i\geq j_i$
for all $1\leq i\leq n$
then
$\delta(\bfj)\geq \delta(\bfh) p^{|\bfj|_1-|\bfh|_1}$.
The bound $\sigma_p \gg 1$, valid with an implied constant independent of $p$
when $p$ is sufficiently large,
reveals that for such $p$ we have 
\[
n-\frac{dR}{2}<|\bfj|_1\leq n
\Rightarrow
\delta(\bfj)
\gg
p^{\frac{dR}{2}}
\]
with an implied constant independent of $p$.
Therefore we need to provide
(necessarily weaker)
bounds for the densities $\delta(\bfj)$
which are however valid through the whole range $1\leq |\bfj|_1\leq n$.
The crucial import will be bounds for the exponential sums in Birch's work 
with the additional property that the dependence on the coefficients of the 
underlying forms is explicitly recorded.
\begin{lemma}\label{lem6}
Assume that $\Theta>R$.
Then there exists a large $z_0=z_0(\bff)$
such that 
for 
each
$\bfj\in \{0,1\}^n$, $\eps>0$ and prime $p>z_0$ 
the following  holds
with an implicit constant depending at most on $\eps$ and $\bff$,
$$\delta(\bfj)\ll p^{dR \Theta+R-\Theta+\eps
}.$$
\end{lemma}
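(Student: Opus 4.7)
The plan is to recognise $\delta(\bfj)$ as the $p$-adic singular density of a rescaled system of forms, and then to invoke the coefficient-uniform exponential sum estimate of Lemma~\ref{expsum}.

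For each $\bfj\in\{0,1\}^n$ I would introduce the rescaled system
\[
\bff^{(\bfj)}(\bfx):=\bff(p^{j_1}x_1,\ldots,p^{j_n}x_n)\in\Z[x_1,\ldots,x_n].
\]
Each monomial $x_{i_1}\cdots x_{i_d}$ of $f_\xi$ is multiplied by $p^{j_{i_1}+\cdots+j_{i_d}}\leq p^d$, hence $\bff^{(\bfj)}$ is again homogeneous of degree $d$ and satisfies $\|\bff^{(\bfj)}\|\leq p^d\|\bff\|$. Crucially, the substitution $\bfx\mapsto(p^{j_1}x_1,\ldots,p^{j_n}x_n)$ is a \emph{nonsingular} linear automorphism of $\C^n$, so it preserves the Birch singular locus and therefore $\mathfrak{B}(\bff^{(\bfj)})=\mathfrak{B}(\bff)$; in particular the constant $K$ attached to $\bff^{(\bfj)}$ equals the one attached to $\bff$. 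A direct comparison of the two counting sets shows $\delta(\bfj)=\sigma_p(\bff^{(\bfj)})$.

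Using orthogonality and grouping by Farey denominators, exactly as in the computation preceding~\eqref{eq:explicitBW} but now applied to $\bff^{(\bfj)}$, one has
\[
\sigma_p(\bff^{(\bfj)})=1+\sum_{t=1}^{\infty}p^{-tn}\Osum_{\bfa\md{p^t}}S_{\bfa,p^t}(\bff^{(\bfj)}).
\]
Invoking Lemma~\ref{expsum} for $\bff^{(\bfj)}$, with the same $K$ and $\|\bff^{(\bfj)}\|\leq p^d\|\bff\|$, gives
\[
|S_{\bfa,p^t}(\bff^{(\bfj)})|\ll_{\eps} p^{dK/(d-1)}\,p^{\,t\,(n-K/(R(d-1))+\eps)}.
\]
Combined with the trivial count $p^{tR}$ of admissible $\bfa$, and recalling $K=\Theta R(d-1)$, so that $dK/(d-1)=dR\Theta$ and $K/(R(d-1))=\Theta$, this yields
\[
|\delta(\bfj)-1|\ll_{\eps} p^{dR\Theta}\sum_{t=1}^{\infty}p^{\,t(R-\Theta+\eps)}.
\]

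Under the hypothesis $\Theta>R$, taking $\eps$ sufficiently small makes the geometric series converge and be dominated by its first term, so that $\delta(\bfj)\ll_\eps p^{dR\Theta+R-\Theta+\eps}$; the additive $1$ is absorbed since $dR\Theta+R-\Theta>0$. The only delicate step is the Birch rank invariance $\mathfrak{B}(\bff^{(\bfj)})=\mathfrak{B}(\bff)$, and this is precisely what makes the final exponent \emph{independent} of $|\bfj|_1$. This is in sharp contrast with Corollary~\ref{cor2}, which uses the restriction $\bff^{\bfj}$ (a genuine subsystem) and therefore suffers a loss of $(R+1)|\bfj|_1$ in the Birch rank via Lemma~\ref{e:ccmm}. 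I do not foresee any further technical obstacle.
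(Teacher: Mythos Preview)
Your proof is correct and follows essentially the same route as the paper. The paper rewrites $W_{\bfa,p^t}(p^{\bfj}|\bfx)$ as $p^{-|\bfj|_1}$ times the complete sum $S_{\bfa,p^t}(\bff^{(\bfj)})$, invokes the coefficient-uniform bound~\cite[Lem.~2.2]{JWA} (which is the paper's Lemma~\ref{expsum}) together with the observations $\|\bff^{(\bfj)}\|\ll_{\bff} p^d$ and $\mathfrak{B}(\bff^{(\bfj)})=\mathfrak{B}(\bff)$, and then feeds this into~\eqref{eq:explicitBW}; your identification $\delta(\bfj)=\sigma_p(\bff^{(\bfj)})$ is simply a cleaner packaging of the same computation.
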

\begin{proof}
We start by rewriting 
\[
W_{\bfa,p^t}(p^{\b{j}}|\b{x})
=p^{-|\bfj|_1}
\sum_{\bfx\md{p^t}}e_{p^t}(\bfa\cdot\bff(p^{j_1}x_1,\ldots, p^{j_n}x_n))
\]
and
considering $\bff(p^{j_1}x_1,\ldots, p^{j_n}x_n)$ as a system of homogeneous polynomials in the variables $x_1,\ldots, x_n$. Note that the maximum of the coefficients is bounded by $C_1p^{d}$ for a positive constant 
$C_1=C_1(\bff)$ that is independent of $p$. Moreover, the Birch rank of 
the system $\bff(\bfx)=\b{0}$ equals the Birch rank of the system 
$\bff(p^{j_1}x_1,\ldots, p^{j_n}x_n)=\b{0}$. Alluding to the estimate~\cite[Lem.2.2]{JWA}
supplies us with the bound
$$ 
W_{\bfa,p^t}(p^{\b{j}}|\b{x})
p^{|\bfj|_1}
\ll_\eps p^{
dR\Theta
+t(n-\Theta
+\eps)},$$
which, once 
injected into~\eqref{eq:explicitBW},
offers the validity of
\begin{equation*}
\begin{split}
\delta(\bfj)-1
\ll 
p^{dR\Theta}
\sum_{t=1}^\infty p^{t(R-
\Theta
+\eps)}
.
\end{split}
\end{equation*}
Enlarging $z_0$ and $1/\epsilon$ if needed,
ensures the convergence of the sum over $t$
to a value that is $\ll_{z_0}p^{R-\Theta+\epsilon}$, independently of $p$.
\end{proof}

For a prime $p$ and a vector $\bfj \in (\Z_{\geq 0})^n$ we define 
\begin{equation}
\label{def:varpi1}
\varpi(p^{j_1},\ldots,p^{j_n}):=\frac{\delta(\bfj)}{\sigma_p(\bff)}
.\end{equation}
The standard estimate
$\sig_p=1+O(p^{-1-
\eps(\b{f})}
)$
holds for some $\eps(\b{f})>0$.
Alluding to~Lemma \ref{lem6}
supplies us with the following corollary. 
\begin{corollary}\label{corome}
Assume that
$
\mathfrak{B}(\b{f})
>
R^2 (d-1)2^{d-1}$ 
and recall the definition of $\Upsilon$ in~\eqref{def:theY}.
Then the following 
bound holds uniformly 
for each $\bfj \in \{0,1\}^n$ and $p> z_0$,
$$\varpi(p^{\bfj})\ll p^\Upsilon.$$
\end{corollary}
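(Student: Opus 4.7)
The proof is a direct combination of Lemma~\ref{lem6} with the standard lower bound for $\sigma_p(\bff)$, together with a comparison of exponents. The plan is as follows.

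First, I would verify the hypothesis of Lemma~\ref{lem6}. Recall that $\Theta = \mathfrak{B}(\bff)/(R(d-1)2^{d-1})$, so the inequality $\Theta > R$ is equivalent to $\mathfrak{B}(\bff) > R^2(d-1)2^{d-1}$, which is exactly the standing assumption of the corollary. Hence Lemma~\ref{lem6} is applicable and yields, for each $\epsilon > 0$ and each prime $p > z_0$,
\[
\delta(\bfj) \ll p^{dR\Theta + R - \Theta + \epsilon},
\]
uniformly in $\bfj \in \{0,1\}^n$.

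Next, I would pass from $\delta(\bfj)$ to $\varpi(p^{\bfj}) = \delta(\bfj)/\sigma_p(\bff)$. The estimate $\sigma_p(\bff) = 1 + O(p^{-1-\epsilon(\bff)})$ recalled just before Corollary~\ref{corome} implies that, upon enlarging $z_0$ if necessary, one has $\sigma_p(\bff) \geq 1/2$ for every $p > z_0$, so $1/\sigma_p(\bff) \ll 1$ with an implied constant independent of $p$. Combining this with the previous display gives
\[
\varpi(p^{\bfj}) \ll p^{dR\Theta + R - \Theta + \epsilon}.
\]

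Finally, I would show that the exponent on the right is strictly dominated by $\Upsilon$. Substituting the definition of $\Theta$, one computes
\[
dR\Theta + R - \Theta = \frac{\mathfrak{B}(\bff)}{(d-1)2^{d-1}}\Bigl(d - \frac{1}{R}\Bigr) + R,
\]
whereas
\[
\Upsilon = \frac{d\,\mathfrak{B}(\bff)}{(d-1)2^{d-1}}\Bigl(d - \frac{1}{R}\Bigr) + R.
\]
Since $d \geq 2$ and the first summand is strictly positive (because $\mathfrak{B}(\bff) > 0$ and $d - 1/R > 0$), one has $\Upsilon > dR\Theta + R - \Theta$, with a gap large enough to absorb the term $\epsilon$ when $\epsilon$ is chosen sufficiently small in terms of $\bff$. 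This yields the desired bound $\varpi(p^{\bfj}) \ll p^{\Upsilon}$.

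There is no real obstacle here; the statement is a bookkeeping corollary whose content is entirely in Lemma~\ref{lem6}. The only point that needs slight care is verifying that the exponent arising from Lemma~\ref{lem6} is dominated by $\Upsilon$ with a strict inequality, so that the $\epsilon$ loss can be absorbed uniformly in $\bfj$ and $p$; the computation above confirms this.
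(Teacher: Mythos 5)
Your proposal is correct and follows exactly the route the paper takes: apply Lemma~\ref{lem6} (whose hypothesis $\Theta>R$ is precisely the assumed bound on $\mathfrak{B}(\bff)$), divide by $\sigma_p(\bff)=1+O(p^{-1-\epsilon(\bff)})$, and check that $dR\Theta+R-\Theta=\frac{\mathfrak{B}(\bff)}{(d-1)2^{d-1}}\bigl(d-\frac{1}{R}\bigr)+R\leq \Upsilon$ with a fixed positive gap absorbing the $\epsilon$. The exponent comparison is the only substantive point and you have carried it out correctly.
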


\section{Proof of Theorems~\ref{thm:rosiwa} and~\ref{thm:weighted}}
\label{s:firstsieve}
\subsection{Preparations}
\label{s:bwv1056largo}
Owing to~\eqref{cond:units},
there exists positive integers
$z_0=z_0(\b{f}),
m=m(\b{f})$  such that
if we let 
$$W:=\prod_{p\leq z_0}p^m,$$ then there exists  
$\b{y} \in (\N\cap [1,W])^n$ fulfilling the following,
\begin{equation}
\label{eq:wmtrick} 
%\b{x} \in \N^n, \b{x}\equiv \b{y} \md{W} \Rightarrow
\gcd(y_1\cdots y_n,W)=1
\end{equation}
and 
\begin{equation}
\label{cond:posit} 
p\leq z_0\Rightarrow\sigma_p(\bff(\bfy+W\b{x}))>0.
\end{equation} Define
\begin{equation}
\label{def:cala} 
\c{A}:=
 \{\b{x}\in \Z^n:
\b{f}(\b{x})=\b{0},\bfx\equiv \bfy\md{W}
 \}
.
\end{equation}

Let us now
choose a non-singular point
$\boldsymbol{\zeta}\in V_\b{f}(\R)$ (whose existence is guaranteed by~\eqref{cond:units})
and 
we
let $\eta \in (0,\min_i 
\{\min\{\zeta_i/2,(1-\zeta_i)/2\}
\})$ be arbitrary.
Defining  
\begin{equation} \label{def:bachbwv1043allegro}
\c{B}_\eta
:=\Big\{
\b{x}\in \R^n:
\Big|\b{x}-\frac{
\boldsymbol{\zeta}
}{
2|\boldsymbol{\zeta}|}
\Big| <\eta 
\Big\} 
,\end{equation}
we see that for any such $\eta$, one has 
$
\c{B}_\eta
\subset (0,1)^n$.
Now we choose any smooth function 
$w:\R\to\R_{\geq 0}$ 
of compact support in $[-\eta/2,\eta/2]$ 
and such that 
if 
$|t|\leq \eta/4$ then $w(t)>0$.  
Letting 
$w_0:=\sup\{w(t):t\in \R\}$
we have
$\mathbf{1}_{\{0<t\leq B\}}(t)
\geq  w_0^{-1}
w(
t/B
-\zeta_i/(2|\boldsymbol{\zeta}|)
)
$
and therefore for every $\b{x}\in \Z^n$, 
\begin{equation}
\label{eq:bwvfminor}
\prod_{i=1}^n
\mathbf{1}_{ \{0<x_i\leq B\}}(\b{x})
\geq  w_0^{-n}
\prod_{i=1}^n
w\bigg( \frac{x_i}{B} -\frac{\zeta_i}{2|\boldsymbol{\zeta}|} \bigg)
.\end{equation}

\subsection{A level of distribution result}
\label{s:lev123}
Let us now
take the opportunity to record a \textit{level of distribution} result that will be the main input in the forthcoming 
sieving arguments.
For $\bfk \in \N^n$ with $\gcd(\widetilde{\bfk},W)=1$
and each $k_i$ being square-free
let $w:\R\to \R_{\geq 0}$ be a smooth weight as above.
%any smooth function of compact support contained in $[-1,1]$.
We let 
\begin{equation}
\label{def:nw}
N_w(B;\bfk)
:=\sum_{\substack{\b{x} \in \c{A}\\ k_i| x_i}}
\prod_{i=1}^n
w\Bigg(
\frac{x_i}{B}
-\frac{\zeta_i}{2|\boldsymbol{\zeta}|}
\Bigg)
.\end{equation}
Recall the definition of the matrix  $\boldsymbol{\epsilon}$ in~\eqref{def:epss}.
Our result will involve an error term 
related to the following function,
defined for $\b{m} \in \N^n$ and $B\geq 1$,
\[E(B;\b{m})
:=
\sum_{i=1}^3
B^{-\epsilon_{i,1}}
|\b{m}|^{\epsilon_{i,2}}
\min\{m_j\}^{\epsilon_{i,3}}
.\]
Furthermore, extend the function 
$\varpi$ defined in~\eqref{def:varpi1}
to $\N^n$ by
letting
for $\bfk \in \N^n$,
\[
\varpi(\b{k})
:=\prod_{p|
k_1\cdots k_n 
}
\varpi
\big(
p^{\nu_p(k_1)},\ldots,p^{\nu_p(k_n)}
\big)
\]
and if 
$\gcd(k_1\cdots k_n,W)=1$ we 
define 
$\boldsymbol{\tau} \in (\Z \cap [0,W))^n$ via 
$\langle \boldsymbol{\tau} \bfk \rangle \equiv \b{y}\md{W}$.
Finally, we 
let   
\[
\grS(\bff,W):=\prod_{p|W}\sigma_p(\b{f}(\boldsymbol{\tau}+W\b{s}))
\prod_{p\nmid W}\sigma_p(\bff)
\]
and
$$\mathcal{J}_w(\bff,W)
:= 
\frac{1}{W^n}
\int_{\R^R}
\int_{\R^n}
e\left( 
\boldsymbol{\gamma}
\cdot
\bff
\left(\b{u}\right) 
\right)
\prod_{i=1}^n 
w
\bigg(
u_i
- 
\frac{\zeta_i}{2|\boldsymbol{\zeta}|}
\bigg)
\d\b{u}
\d\boldsymbol{\gamma} 
.$$ 
%We note that  since  $|t|<\eta$ implies that $w(t)>0$, we have  \[ \mathcal{J}_w(\bff,W) \geq  \min\{w(\b{u}):\b{u}\in (-\eta,\eta)^n\} \int_{\R^R}\int_{\frac{B}{W}\c{B}_\eta} e(W^d\bfgam\cdot\bff(\bfu))  \d\bfu\d\bfgam \] and, after a suitable linear change of variables, this equals a positive constant (depending at most on   $W$), multiplied by  \[  B^{n-Rd} \int_{\R^R} \int_{\c{B}_\eta} e(\bfgam\cdot\bff(\bfu))  \d\bfu\d\bfgam .\] Now, since $\c{B}_\eta$ contains $\boldsymbol{\zeta}$, which is a non-singular real point of $V_\b{f}$, it is standard that the last integral is strictly positive, see~\cite[\S\S 6-7]{Bir62}. This shows that  \begin{equation} \label{eq:strictlyposit} \mathcal{J}_w(\bff,W) \gg_{\b{f},\boldsymbol{\zeta},\eta,w,W} 1 .\end{equation}  
\begin{lemma}
\label{lem:levofdi}
Assume $\mathfrak{B}(\bff)>2^{d-1}R(R+1)(d-1)$
and that 
$\bfk \in \N^n$ satisfies
$$
\gcd(k_1\cdots k_n,W)=1
\
\text{ and } \
|\bfk|\leq
B^{1/\rho}
(\log B)^{-1}
,$$
where
$B \in \mathbb{R}_{\geq 1}$ 
and
the constant $\rho$ was defined in~\eqref{def:rrr}.
Then for each $\eps>0$ we have
\[
N_w(B;\bfk)=
\mathcal{J}_w(\bff,W)
\grS(\bff,W)
\frac{\varpi(\bfk)}{\widetilde{\bfk}}
B^{n-Rd}
+O\bigg(
\frac{B^{n+\eps}}{\widetilde{\b{k}}}
E(B;\b{k})
\bigg)
.\]
\end{lemma}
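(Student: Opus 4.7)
The plan is to apply Theorem~\ref{thm:master2} after a change of variables that uniformizes the lattice conditions on $\b{x}$. Since $\gcd(\widetilde{\bfk},W)=1$, the simultaneous conditions $k_i\mid x_i$ and $x_i\equiv y_i\pmod{W}$ are parametrized bijectively by $x_i = k_i(\tau_i + W s_i)$ for $\b{s}\in\Z^n$, with $\boldsymbol{\tau}$ as in the statement of the lemma. Setting $g_i(\b{s}) := f_i(k_1(\tau_1+Ws_1),\ldots,k_n(\tau_n+Ws_n))$ recasts $N_w(B;\bfk)$ as a weighted count of integer zeros of $\b{g}(\b{s})=\b{0}$, namely $\sum_{\b{g}(\b{s})=\b{0}} \prod_i w(s_i/P_i - z_i')$, where $P_i := B/(k_iW)$ and $z_i' := \zeta_i/(2|\boldsymbol{\zeta}|) - k_i\tau_i/B$.

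Next I verify the hypotheses of Theorem~\ref{thm:master2}. The map $x_i\mapsto k_iWs_i$ is a linear, invertible, diagonal scaling, so the Birch rank is preserved, $\mathfrak{B}(\b{g}^\natural)=\mathfrak{B}(\bff)$, and the condition $K>R(R+1)(d-1)$ is inherited. Expanding the substitution gives the coefficient bounds $\|\b{g}\|,\|\b{g}^\natural\|\ll_{\bff,W} |\bfk|^d$. Using $P_{\max}/P_{\min}=k_{\max}/k_{\min}\leq|\bfk|$ and $P_{\max}\geq B/(W|\bfk|)$, condition~\eqref{cond:master} reduces after a direct rearrangement to $|\bfk|^{\rho+1}\ll_{\bff,W} B$, which is guaranteed by the hypothesis $|\bfk|\leq B^{1/\rho}(\log B)^{-1}$ once $B$ is large enough (the $\log B$ factor absorbs the implicit constant). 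Theorem~\ref{thm:master2} then yields $N_w(B;\bfk)=\grS(\b{g})\,J_w(\b{g}) + (\text{three error terms})$.

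The main term requires the identifications $J_w(\b{g})=\mathcal{J}_w(\bff,W)\,B^{n-Rd}/\widetilde{\bfk}$ and $\grS(\b{g})=\grS(\bff,W)\,\varpi(\bfk)$. For the singular integral, the successive changes of variable $v_i=u_i/P_i$ and $\bfbet=B^d\bfgam$ in the definition of $J_w(\b{g})$ transform it into $(B^{n-Rd}/(W^n\widetilde{\bfk}))\int\!\int e(\bfbet\cdot\bff(\b{v}))\prod_i w(v_i-z_i')\,d\b{v}\,d\bfbet$; smoothness of $w$, the estimate $|z_i'-\zeta_i/(2|\boldsymbol{\zeta}|)|\ll|\bfk|/B$, and absolute convergence of the singular integral (Lemma~\ref{lem3.16}, applicable since $K>R^2(d-1)$) produce $J_w(\b{g})=\mathcal{J}_w(\bff,W)\,B^{n-Rd}/\widetilde{\bfk}+O(B^{n-Rd-1}|\bfk|/\widetilde{\bfk})$, with the secondary term absorbed into the $i=1$ contribution to $E(B;\bfk)$. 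For the singular series, compare prime by prime: when $p\nmid W$, write $k_i=p^{v_p(k_i)}k_i'$ and use that $\tilde x_i:=k_i'(\tau_i+Ws_i)$ is a bijection modulo $p^l$ (since $k_i'W$ is a $p$-adic unit) to identify $\sigma_p(\b{g})$ with the local density $\delta(v_p(\bfk))$ of \S\ref{s:localden}, equal to $\varpi(p^{v_p(\bfk)})\sigma_p(\bff)$ by definition; when $p\mid W$, one has $\gcd(k_i,p)=1$ and the bijection $w_i=k_iv_i$ together with $k_i\tau_i\equiv y_i\pmod{W}$ identifies $\sigma_p(\b{g})$ with $\sigma_p(\bff(\boldsymbol{\tau}+W\b{s}))$. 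Multiplying over all primes and using $\gcd(\widetilde{\bfk},W)=1$ yields the claimed factorization.

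Finally, I translate each of the three error terms of Theorem~\ref{thm:master2} into the form $B^{n+\eps}\widetilde{\bfk}^{-1}\cdot B^{-\epsilon_{i,1}}|\bfk|^{\epsilon_{i,2}}\min\{k_j\}^{\epsilon_{i,3}}$ using $\widetilde{\bfP}=B^n/(W^n\widetilde{\bfk})$, $P_{\max}=B/(Wk_{\min})$, $P_{\max}/P_{\min}\leq|\bfk|/\min\{k_j\}$, and $\|\b{g}\|,\|\b{g}^\natural\|\ll|\bfk|^d$. The columns of $\boldsymbol{\epsilon}$ in~\eqref{def:epss} are precisely the exponents that arise; the $\max\{0,\cdot\}$ in the third row reflects the step of bounding $\min\{k_j\}^{(\cdot)}\leq 1$ whenever the natural exponent is negative. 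The most delicate part of the argument is the singular-series comparison, which demands careful juggling of bijections at primes dividing $W$ (where $\bfk$ is a unit) versus primes dividing $\widetilde{\bfk}$ (where $W$ is a unit), consistently with the defining relation $\langle\bfk\boldsymbol{\tau}\rangle\equiv\b{y}\pmod W$ at each local place.
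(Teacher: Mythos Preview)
Your proof follows the same route as the paper: reduce to Theorem~\ref{thm:master2} via the substitution $x_i=k_i(\tau_i+Ws_i)$, verify the hypotheses (Birch rank preserved, coefficient bounds $\|\b g\|,\|\b g^\natural\|\ll|\bfk|^d$, and condition~\eqref{cond:master}), identify $\grS(\b g)=\grS(\bff,W)\varpi(\bfk)$ prime by prime, and read off the three error terms as the three columns of~\eqref{def:epss}. All of this is correct.

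The one substantive difference is in the singular integral. You work with the homogeneous $J_w(\b g)$ that Theorem~\ref{thm:master2} literally outputs, change variables, and then shift the centre from $z_i'=\zeta_i/(2|\boldsymbol\zeta|)-k_i\tau_i/B$ to $\zeta_i/(2|\boldsymbol\zeta|)$, claiming the resulting error $O(B^{n-Rd-1}|\bfk|/\widetilde{\bfk})$ is absorbed into the $i=1$ column of $E(B;\bfk)$. This absorption is not justified: once multiplied by $\grS(\b g)=\grS(\bff,W)\varpi(\bfk)$, the secondary term becomes $O(\varpi(\bfk)\,B^{n-Rd-1}|\bfk|/\widetilde{\bfk})$, and the only a priori bounds on $\varpi(\bfk)$ (or on $|\grS(\b g)|$ via Lemma~\ref{lem3.13}, which gives $\ll|\bfk|^{dK/(d-1)}$) are not strong enough to force this below $B^{n+\eps}\widetilde{\bfk}^{-1}\cdot B^{-Rd-1/2}|\bfk|^{Rd+1/2}$ in the stated range of $|\bfk|$.

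The paper sidesteps this completely. It observes that the proof of Theorem~\ref{thm:master2} equally supports the \emph{inhomogeneous} real density (the major-arc analysis starts inhomogeneous, and Lemma~\ref{lem3.15} applies after a shift of centre), so one may use
\[
\int_{\R^R}\int_{\R^n} e\bigl(\boldsymbol\beta\cdot\bff(\langle\bfk\boldsymbol\tau\rangle+W\langle\bfk\b s\rangle)\bigr)\prod_i w\!\left(\frac{s_i}{B/(Wk_i)}-z_i'\right)d\b s\,d\boldsymbol\beta
\]
in place of $J_w(\b g)$. Under $k_i(\tau_i+Ws_i)=Bu_i$, $B^d\beta_i=\gamma_i$ this becomes \emph{exactly} $\mathcal J_w(\bff,W)\,B^{n-Rd}/\widetilde{\bfk}$, with centre $\zeta_i/(2|\boldsymbol\zeta|)$ and no shift error at all. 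That is the cleanest fix for your argument.
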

\begin{proof}
Defining 
%$\boldsymbol{\tau}\in  (\Z \cap [0,W))^n$ via $k_i \tau_i\equiv  y_i\md{W}$ and 
$\b{g}(\b{s}):=\b{f}(\langle \bfk(\boldsymbol{\tau} +W\b{s})\rangle )$
gives
\[
N_w(B;\bfk)=
\sum_{\substack{\b{s} \in \Z^n\\\b{g}(\b{s})=\b{0}}} 
\prod_{i=1}^n
w\Bigg(\frac{s_i}{\frac{B}{k_iW}}
-
\Bigg(
\frac{\zeta_i}{2|\boldsymbol{\zeta}|}-\frac{\tau_i}{\frac{B}{k_i}}
\Bigg)
\Bigg)
.\]
We shall apply Theorem~\ref{thm:master2} at this point;
before doing so we need to verify that 
\[
\Bigg|
\frac{\zeta_i}{2|\boldsymbol{\zeta}|}-\frac{\tau_i}{\frac{B}{k_i}}
\Bigg|
\leq 1
\]and that 
condition~\eqref{cond:master} is met.
The former is easy to verify due to $|\boldsymbol{\tau}|\leq W\ll 1$ and $\rho>1$, which implies that 
$B/k_i \geq B^{1-1/\rho} (\log B)\to +\infty$.
Regarding~\eqref{cond:master}, the obvious
equality
$\bfg^\natural(\b{s})=W^d\bff(\langle \bfk \b{s}\rangle)$
presents us with
$\max\big\{
\|\bfg^\natural\|,\|\bfg\|
\big\}
\ll
|\bfk|^d
$,
thus the growth condition on $|\bfk|$ in our lemma
is sufficient.
The last issue to be commented regards the 
real densities.
The real density provided by the application of Theorem~\ref{thm:master2}
is
\[\int_{\R^R}
\int_{\R^n}
e(
W^d
\boldsymbol{\beta}
\cdot
\bff(\langle \bfk \b{s}\rangle) 
)
\prod_{i=1}^n 
w\left(\frac{s_i}{\frac{B}{Wk_i}}
-\left(\frac{\zeta_i}{2|\boldsymbol{\zeta}|}
-\frac{\tau_i}{\frac{B}{k_i}}
\right)\right)
\d\b{s}\d\boldsymbol{\beta}
.\]
Note that the proof of Theorem \ref{thm:master2} in fact shows that the real density can also be replaced by its inhomogeneous version,
\[
\int_{\R^R}
\int_{\R^n}
e(
\boldsymbol{\beta}
\cdot
\bff(\langle \bfk 
\boldsymbol{\tau}
\rangle +W\langle \bfk \b{s}\rangle) 
)
\prod_{i=1}^n 
w\left(\frac{s_i}{\frac{B}{Wk_i}}
-\left(\frac{\zeta_i}{2|\boldsymbol{\zeta}|}
-\frac{\tau_i}{\frac{B}{k_i}}
\right)\right)
\d\b{s}\d\boldsymbol{\beta}
.\]
For this we note that the major arc analysis initially came 
in its 
inhomogeneous form,
namely having 
$\bff(\langle \bfk
\boldsymbol{\tau}
\rangle +W\langle \bfk \b{s}\rangle) $ in the exponential. 
Moreover, by shifting the center of the weight functions, one sees that Lemma \ref{lem3.15} still applies to the inhomogeneous form and then everything stays exactly the same with regard to the error terms.\par
To continue 
the proof of our lemma we perform 
the linear change of variables
$s_i\mapsto u_i$
and 
$\beta_i \mapsto \gamma_i$ 
given by 
$k_i(\tau_i+Ws_i)=B u_i$, $B^d \beta_i=\gamma_i$.
This leads to  the
following expression for the real density in our lemma,
\[
\frac{B^{n-Rd}}{W^n{\widetilde{\bfk}}}
\int_{\R^R}
\int_{\R^n}
e\left( 
\boldsymbol{\gamma}
\cdot
\bff
\left(\b{u}\right) 
\right)
\prod_{i=1}^n 
w
\bigg(
u_i
- 
\frac{\zeta_i}{2|\boldsymbol{\zeta}|}
\bigg)
\d\b{u}
\d\boldsymbol{\gamma}
,\]
which equals $\mathcal{J}_w(\bff,W)\widetilde{\bfk}^{-1} 
B^{n-Rd}
$. 
\end{proof}
The most noteworthy 
property of Lemma~\ref{lem:levofdi}
is related to the presence of $\widetilde{\bfk}^{-1}$ in the error term;
this allows to drastically
improve the level of distribution in the forthcoming 
applications.

\subsection{Using the Rosser--Iwaniec sieve}
\label{s:flst}
By~\eqref{eq:bwvfminor}
we have
the following 
whenever 
$z$ satisfies $z_0<z<B$,
\[
\sum_{\substack{\b{x} \in (\N\cap [-B,B])^n
\\
\bff(\bfx)=\b{0},
P^-(\widetilde{\bfx})>z}}\hspace{-0,1cm}1
\geq 
w_0^{-n}
\sum_{\substack{\b{x} \in \c{A}\\P^-(\widetilde{\bfx})>z}}
\prod_{i=1}^n
w\bigg( \frac{x_i}{B} -\frac{\zeta_i}{2|\boldsymbol{\zeta}|} \bigg)
%w\Big(\frac{x_i}{B/2}\Big)
.\] 
Let us now bring into play a lower bound sieve
sequence
$\lambda_k^-$ of
dimension $n$. 
Recall the definition of $\theta'$ in~\eqref{def:the}.
We shall make use of the terminology in~\cite[\S 11.8]{MR2647984};
in doing so we shall call the support of $\lambda^-$ by
$D:=B^\delta$, for some constant $\delta \in (0,\theta').$
Using $(1\ast \mu)(l)\geq (1\ast \lambda^-)(l)$
for $l=\gcd(P(z_0,z),\widetilde{\b{x}})$
yields
\[
\sum_{\substack{\b{x} \in (\N\cap [-B,B])^n
\\
\bff(\bfx)=\b{0},
P^-(\widetilde{\bfx})>z}}\hspace{-0,1cm}1
\geq 
w_0^{-n}
\sum_{\substack{k|P(z_0,z)\\k\leq B^\delta}}
\lambda_k^-
\sum_{\substack{\b{x} \in \c{A}\\ k| \widetilde{\bfx}}}\prod_{i=1}^n
w\bigg( \frac{x_i}{B} -\frac{\zeta_i}{2|\boldsymbol{\zeta}|} \bigg)
.\]
The proof of~\cite[Lem.8]{BF}
can be directly
adapted in the setting of arbitrary dimension,
thus providing  the equality of the inner sum over $\bfx$ to
\[
\mu(k)\sum_{\substack{\bf{k}\in \N^n\\p|\widetilde{\bfk} \Leftrightarrow p|k}}\mu(\bfk)
N_w(B;\bfk)
,\]
where here and throughout the rest of the paper we will use the notation
\[
\mu(\bfk):=\mu(k_1)\cdots \mu(k_n)
.\]
A moment's thought reveals that the succeeding function is multiplicative,
\begin{equation}
\label{def:ome}
g(k):=
\mathbf{1}_{(k,W)=1}(k)
 \mu(k) \sum_{\substack{\bfk\in \N^n\\ p|\widetilde{\bfk} \Leftrightarrow p|k}}\mu(\bfk)
\varpi(\b{k})
\widetilde{\bfk}^{-1}
,\end{equation}  
a notation which allows
to assort our conclusions so far
in the following form,
\[
\sum_{\substack{\b{x} \in (\N\cap [-B,B])^n\\\bff(\bfx)=\b{0},P^-(\widetilde{\bfx})>z}}\hspace{-0,1cm}1
\gg
B^{n-Rd}
\sum_{\substack{k|P(z_0,z)\\k\leq B^\delta}}
\lambda_k^-g(k)
+O\Bigg(B^{n+\eps}\sum_{k\leq B^\delta}
|\mu(k)|
\sum_{\substack{\b{k}\in \mathbb{N}^n
\\p|\widetilde{\bfk} \Leftrightarrow p|k}}
\frac{
|\mu(\bfk)|
}{\widetilde{\b{k}}}
E(B;\b{k}) 
\Bigg)
.\]
In bounding the error term we will be confronted with sums of the form
\[
b_k:=
|\mu(k)|
\sum_{\substack{\b{k}\in \mathbb{N}^n
\\p|\widetilde{\bfk} \Leftrightarrow p|k}}
\frac{
|\mu(\bfk)|
}{\widetilde{\b{k}}}
|\b{k}|^{\alpha_1}
\min\{k_i\}^{\alpha_2}
,\]
where $\alpha_i\geq 0 $.
Each $\b{k}$ making a contribution to $b_k$ 
satisfies 
$|\bfk|\leq k \leq \widetilde{\b{k}}$, therefore 
\[
b_k \ll
|\mu(k)| k^{\alpha_1+\alpha_2-1+\eps}
.\]
We deduce that for each $1\leq j \leq 3$,
the quantity
\[
B^{-\epsilon_{j,1}}
\sum_{k\leq B^\delta} k^{\epsilon_{j,2}+\epsilon_{j,3}-1}
\ll
B^{-\epsilon_{j,1}+\delta(\epsilon_{j,2}+\epsilon_{j,3})}
\]
becomes $\ll B^{-Rd-\epsilon'}$ for some $\epsilon'>0$
due to $\delta<\theta'$. 
Therefore,    
we can see that   
for each 
$\delta \in (0,\theta')$
and $\eps>0$ 
there exists $\eta=\eta(\eps,\delta)>0$
such that 
\[
B^{n+\eps}
\sum_{k\leq B^\delta}
|\mu(k)|
\sum_{\substack{\bf{k}\in \N^n\\p|\widetilde{\bfk} \Leftrightarrow p|k}}
\frac{
|\mu(\bfk)|
}{\widetilde{\b{k}}}
E(B;\b{k}) 
\ll
B^{n-Rd-\eta}
.\]
This leads to the conclusion that subject to the assertion 
\begin{equation}
\label{eq:lowerb}
\sum_{\substack{k|P(z_0,z)\\k\leq B^\delta}}
%\sum_{k|P(z_0,z)}
\lambda_k^-g(k)
\gg
(\log B)^{-n}
\end{equation}
we can establish Theorem~\ref{thm:rosiwa} due to
\[
\sum_{\substack{\b{x} \in (\N\cap [-B,B])^n\\\bff(\bfx)=\b{0},P^-(\widetilde{\bfx})>z}}\hspace{-0,1cm}1
\gg
\frac{B^{n-Rd}}{(\log B)^n}
.\]
To prove~\eqref{eq:lowerb}
we shall use~\cite[Th.11.12]{MR2647984}.
To this end,
for any polynomials
$h_i \in \Z[x_1,\ldots,x_n]$
we abbreviate 
\[
\sigma_p(p| \b{h}(\b{x}))
:=
\lim_{l\to+\infty}
p^{-(n-R)l}
\#\bigg\{1\leq \bfx \leq p^l
:
p^l|\b{f}(\b{x}),  p| \b{h}(\b{x})
\bigg\}
.\]
\begin{lemma}
\label{lem:local_interpret}
For each prime $p>z_0$ 
we have 
$
g(p)
\sigma_p
=
\sigma_p(p|x_1\cdots x_n)
$.
\end{lemma}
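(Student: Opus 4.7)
The plan is to unfold the definitions on both sides and match them via a standard inclusion--exclusion identity, since $k=p$ is squarefree and the sum collapses to a finite sum over $\bfj\in\{0,1\}^n$.

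First I would expand $g(p)$. Since $p>z_0$ we may assume $\gcd(p,W)=1$, so the indicator in~\eqref{def:ome} equals $1$, and $\mu(p)=-1$. The vectors $\bfk\in\N^n$ contributing to the sum satisfy $\mu(\bfk)\neq 0$ (so each $k_i$ is squarefree) and $p\mid\widetilde{\bfk}\Leftrightarrow p\mid p$, which forces every $k_i\in\{1,p\}$ with at least one $k_i=p$. Parametrising $\bfk=(p^{j_1},\ldots,p^{j_n})$ by $\bfj\in\{0,1\}^n\setminus\{\mathbf{0}\}$, one has $\mu(\bfk)=(-1)^{|\bfj|_1}$, $\widetilde{\bfk}=p^{|\bfj|_1}$, and $\varpi(\bfk)=\delta(\bfj)/\sigma_p$ by~\eqref{def:varpi1}. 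Multiplying by $\sigma_p$ I obtain
\[
g(p)\sigma_p \;=\; -\sum_{\bfj\in\{0,1\}^n\setminus\{\mathbf{0}\}}(-1)^{|\bfj|_1}\,\delta(\bfj)\,p^{-|\bfj|_1}.
\]

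Next I would turn to the right-hand side $\sigma_p(p\mid x_1\cdots x_n)$. The event ``$p\mid x_1\cdots x_n$'' is the union $\bigcup_{i=1}^n\{p\mid x_i\}$, so inclusion--exclusion at the level of indicator functions gives, for every $\bfx$,
\[
\mathbf{1}\bigl[p\mid x_1\cdots x_n\bigr] \;=\; -\sum_{\bfj\in\{0,1\}^n\setminus\{\mathbf{0}\}}(-1)^{|\bfj|_1}\,\mathbf{1}\bigl[p^{\bfj}\mid\bfx\bigr].
\]
Inserting this into the defining counting function for $\sigma_p(p\mid x_1\cdots x_n)$, interchanging the finite sum with the limit, and recognising each resulting term as $\sigma_p(p^{\bfj}\mid\bfx)$, I get
\[
\sigma_p(p\mid x_1\cdots x_n) \;=\; -\sum_{\bfj\in\{0,1\}^n\setminus\{\mathbf{0}\}}(-1)^{|\bfj|_1}\,\sigma_p(p^{\bfj}\mid\bfx).
\]

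Finally I would invoke the identity $\sigma_p(p^{\bfj}\mid\bfx)=\delta(\bfj)\,p^{-|\bfj|_1}$ recorded in Section~\ref{s:localden} just after the definition of $\delta(\bfj)$. Substituting this into the previous display shows that the two expressions for $g(p)\sigma_p$ and $\sigma_p(p\mid x_1\cdots x_n)$ agree term by term, completing the proof. There is no real obstacle: the only point to watch is the legitimacy of interchanging the finite inclusion--exclusion sum with the limit defining the $p$-adic densities, which is immediate since the sum has only $2^n-1$ terms independent of the truncation level $l$.
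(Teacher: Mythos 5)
Your proof is correct and follows essentially the same route as the paper: both expand $g(p)$ into the alternating sum $\sum_{\bfj\neq\mathbf{0}}(-1)^{|\bfj|_1-1}\delta(\bfj)p^{-|\bfj|_1}$ and identify it with $\sigma_p(p\mid x_1\cdots x_n)$ via inclusion--exclusion over the events $p\mid x_i$. The only cosmetic difference is that you invoke the recorded identity $\sigma_p(p^{\bfj}\mid\bfx)=\delta(\bfj)p^{-|\bfj|_1}$ directly, whereas the paper re-derives it inside the proof by splitting the counting functions $N_{\bfj}(p^l)$ into residue classes modulo $p^{l-1}$.
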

\begin{proof}
The definition~\eqref{def:ome}
furnishes
\[
g(p)
\sigma_p
=
\sum_{m=1}^n
\frac{(-1)^{m-1}}{p^m}
\sum_{\substack{
\bfj 
\in \{0,1\}^n
\\
|\bfj|_1=m
}}
\delta(\bfj)
,\]
thus,
letting
$N_{\bfj}(p^l)
:=
\#\big\{
1\leq x_1,\ldots,x_l\leq p^l:
\b{f}((p^{j_i}x_i))\equiv \b{0} \md{p^l} 
\big\}$,
we conclude that
\begin{equation}
\label{eq:prf} 
g(p)
\lim_{l\to+\infty}
\frac{
N_{\boldsymbol{\b{0}}}(p^l)
}
{p^{(n-R)l}}
=
\lim_{l\to+\infty}
\sum_{m=1}^n
\frac{(-1)^{m-1}}{p^m}
\sum_{\substack{
\bfj
\in \{0,1\}^n
\\
|\bfj|_1=m
}}
\frac{
N_{\bfj}(p^l)
}
{p^{(n-R)l}}
.\end{equation} 
Obviously, if 
$j_i=1$
and
$y_i\equiv x_i \md{p^{l-1}}$
then 
$p^{j_i}y_i\equiv p^{j_i}x_i \md{p^{l}}$.
Therefore we may split the interval $[1,p^l]$ 
into $p$ subintervals of length $p^{l-1}$
to obtain
\[
N_{\bfj}(p^l)
=p^{|\bfj|_1}
\#\Big\{
j_i=1\Rightarrow
1\leq x_i \leq p^{l-1},
j_i=0\Rightarrow
1\leq x_i \leq p^{l}
:
\b{f}((p^{j_i}x_i))\equiv \b{0} \md{p^l} 
\Big\}
.\]
One can see that this entity equals
$
\#\Big\{ 
\b{x} \leq p^{l}
:
\b{f}(\b{x})\equiv \b{0} \md{p^l},
j_i=1\Rightarrow
p|x_i
\Big\}
$, hence, combining this with~\eqref{eq:prf} 
yields the desired result.
\end{proof}  
\begin{lemma}
\label{lem:cookmag}
There exists $\eps_0\in (0,1)$ such that one has
$$g(p)= \frac{n}{p}
+
O(p^{-1-\eps_0}).$$
\end{lemma}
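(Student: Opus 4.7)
The starting point is to unfold the definition \eqref{def:ome} of $g$ at a prime $p>z_0$. Since $(p,W)=1$ and $\mu(p)=-1$, the inner sum is constrained to $\bfk\in\N^n$ with each $k_i\in\{1,p\}$ and at least one $k_i=p$. Identifying such $\bfk$ with $\bfj\in\{0,1\}^n\setminus\{\b{0}\}$ via $k_i=p^{j_i}$, one has $\mu(\bfk)=(-1)^{|\bfj|_1}$ and $\widetilde{\bfk}=p^{|\bfj|_1}$, leading to the exact formula
\[
g(p)=\sum_{m=1}^{n}(-1)^{m-1}p^{-m}\sum_{\substack{\bfj\in\{0,1\}^n\\|\bfj|_1=m}}\varpi(p^{\bfj}).
\]

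The $m=1$ contribution will be the source of the main term $n/p$. For each $\bfj=\bfe_i$ the Birch-rank assumption inherited from the ambient theorem covers the hypothesis of Corollary~\ref{cor2} (since $(d-1)R^{2}2^{d-1}+(R+1)$ is the required lower bound for $|\bfj|_1=1$), giving $\delta(\bfe_i)=1+O(p^{-\lambda})$ for some $\lambda>0$. Coupling this with the standard local expansion $\sigma_p=1+O(p^{-1-\eps(\bff)})$ produces $\varpi(p^{\bfe_i})=\delta(\bfe_i)/\sigma_p=1+O(p^{-\eps_1})$ uniformly in $p$, for some $\eps_1>0$. Summing over $i=1,\ldots,n$ furnishes the main term $n/p+O(p^{-1-\eps_1})$.

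For the contributions from $m\geq 2$ I will split according to a threshold $M=M(\bff,d,R,n)$, chosen as follows. For $2\leq m\leq M$, Corollary~\ref{cor2} (applicable under $\mathfrak{B}(\bff)>(d-1)R^{2}2^{d-1}+(R+1)M$) gives $\varpi(p^{\bfj})\ll 1$ uniformly in $\bfj$ and $p$, so the partial contribution is bounded by
\[
\sum_{m=2}^{M}\binom{n}{m}p^{-m}\ll p^{-2}.
\]
For the tail $M<m\leq n$, the sharper Corollary~\ref{cor2} no longer applies but Corollary~\ref{corome} still delivers the uniform bound $\varpi(p^{\bfj})\ll p^{\Upsilon}$, whence this part contributes
\[
\ll p^{\Upsilon}\sum_{m=M+1}^{n}\binom{n}{m}p^{-m}\ll p^{\Upsilon-M-1}.
\]
It therefore suffices to select any integer $M>\Upsilon$ compatible with Corollary~\ref{cor2}, which is precisely what the built-in condition $\mathfrak{B}(\bff)>(d-1)R^{2}2^{d-1}+(R+1)(\Upsilon+1)$ was designed to ensure. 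The essential obstacle is the simultaneous calibration of $M$ so that (i) $M$ is above $\Upsilon$ to absorb the tail and (ii) $M$ is below the range where Corollary~\ref{cor2} breaks down; the arithmetic relation between the hypothesis on $\mathfrak{B}(\bff)$ and the definition of $\Upsilon$ in \eqref{def:theY} is exactly what makes these two demands compatible, yielding the result with any sufficiently small $\eps_0\in(0,1)$.
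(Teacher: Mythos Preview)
Your expansion of $g(p)$ and the split into ranges $m=1$, $2\le m\le M$, $m>M$ is correct and mirrors the calculation the paper carries out later in the proof of Lemma~\ref{lem3}. Under the hypotheses of Theorem~\ref{thm:mainvector} the choice $M=\lfloor\Upsilon\rfloor+1$ does lie in the admissible window and the argument goes through.

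The difficulty is that you have mis-identified the ambient hypotheses. Lemma~\ref{lem:cookmag} sits in \S\ref{s:flst}, where it is used to verify the sieve-dimension condition in the proof of Theorem~\ref{thm:rosiwa}; the only standing assumption there is the Birch condition $K>R(R+1)(d-1)$. Under that hypothesis alone Corollary~\ref{cor2} is unavailable even for $|\bfj|_1=1$: its requirement $\mathfrak{B}(\bff)>2^{d-1}(d^{2}-1)R^{2}$ amounts to $\Theta>R(d+1)$, whereas the Birch condition only yields $\Theta>R+1$. Consequently the error term coming from Lemma~\ref{lem4} is of order $p^{-\Theta+R(d+1)+\eps}$, which need not decay, and you cannot conclude $\delta(\bfe_i)=1+O(p^{-\lambda})$. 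The same obstruction blocks the range $2\le m\le M$, and the condition $\mathfrak{B}(\bff)>2^{d-1}(d-1)R^{2}+(R+1)(\Upsilon+1)$ that you invoke for the threshold $M$ is simply not part of the hypotheses of Theorem~\ref{thm:rosiwa} (nor of Theorem~\ref{thm:weighted}).

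The paper circumvents this by avoiding term-by-term control of $\varpi(p^{\bfj})$. It first uses Lemma~\ref{lem:local_interpret} to recognise $g(p)=\sigma_p(p\mid x_1\cdots x_n)/\sigma_p$, and then appeals to Lemmas~11--12 of Cook--Magyar~\cite{CM}, which evaluate the $p$-adic \emph{unit} density $\lim_{t\to\infty}p^{-t(n-R)}M(p^t)$ directly as $(1-1/p)^n(1+O(p^{-1-\eps_0}))$ under the bare Birch condition. This gives $g(p)=1-(1-1/p)^n\sigma_p^{-1}+O(p^{-1-\eps_0})$, from which the claim follows after expanding $(1-1/p)^n$ and using $\sigma_p=1+O(p^{-1-\eps_1})$. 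Your decomposition proves a version of the lemma sufficient for Theorem~\ref{thm:mainvector}, but not for Theorems~\ref{thm:rosiwa} and~\ref{thm:weighted}.
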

\begin{proof}
For a prime $p$ and $t\in \N$ let
$M(p^t):=\sharp\{1\leq \bfx \leq p^t: p^t|\bff(\bfx)
,\ p\nmid x_1\cdot\ldots\cdot x_n\}$.
Then Lemmas $11$-$12$  in~\cite{CM} imply that there exists a positive $\eps_0>0$ such that
$$\left(1-\frac{1}{p}\right)^{-n} \lim_{t\rightarrow \infty} p^{-t(n-R)}M(p^t) = 1+O(p^{-1-\eps_0}).$$
We observe that
$\lim_{t\rightarrow \infty} p^{-t(n-R)}M(p^t) 
= \sig_p - \sig_p(p|x_1\ldots x_n)$,
thus Lemma~\ref{lem:local_interpret}
reveals that 
\begin{equation*}
\begin{split}
g(p)
&= \frac{\sig_p(p|x_1\ldots x_n)}{\sig_p} \\&= 1 - \sig_p^{-1}\lim_{t\rightarrow \infty} p^{-t(n-R)}M(p^t) \\&= 1-\left(1-\frac{1}{p}\right)^n\frac{1}{\sig_p}+O(\sig_p^{-1}p^{-1-\eps_0}).
\end{split}
\end{equation*}
The work of Birch \cite{Bir62} establishes the existence of a positive $\epsilon_1$ such that
$\sig_p=1+O(p^{-1-\eps_1}).$
This is sufficient for our lemma. 
\end{proof}
Enlarging $z_0$
if necessary, 
ensures that
for all primes $p$ we have  
\[
0\leq g(p) <1
\
\
\text{and}
\
\
g(p)\leq \frac{n}{p}+O(p^{-1-\epsilon_0})
.\]
This means that one can take $
\kappa=n$ in~\cite[Eq.(11.129)]{MR2647984},
hence our sieve problem
has dimension  $n$.
By~\cite[Th.17.2,Prop.17.3]{MR2458547},
the
sieving limit
$\beta$
fulfils 
$
\beta
\leq 
3.75 
n
$,
thus~\cite[Th.11.12]{MR2647984},
in combination with Lemma~\ref{lem:cookmag},
guarantees the veracity of~\eqref{eq:lowerb}
under the condition
\[
\frac{\log D}{\log z}>
3.75 n
.\]
This concludes the proof of Theorem~\ref{thm:rosiwa}. 
\subsection{Using the weighted sieve}
\label{s:weightedwww}
In the last section we saw that sieving out small prime divisors of $x_1\cdots x_n$
for integer zeros of $\bff(\b{x})=\b{0}$
gives rise to a sieve of dimension $n$.
When the dimension of the sieve increases
then the weighted sieve gives better results for the number of prime divisors in our sequence.
We would like to use the weighted sieve in the form given in the Cambridge Tract of
Diamond and Halberstam~\cite[Th.11.1]{MR2458547},
however we shall need a more flexible version of their work;
one that allows the use of smooth weights.
This will follow from a generalisation of the weighted sieve
that will be given in~\S\ref{s:wssw}.
This generalisation permits
the use of any suitable non-negative function
rather than just a smooth weight
as well as sieving in multisets.
\subsubsection{The weighted sieve with smooth weights}
\label{s:wssw}
We assume that $\c{M}$ is any set equipped with two functions
$
\pi:\c{M}\to\N,
h:\c{M} \to \R 
$
such that 
\begin{equation}
\label{cond:hhh}
h(\c{M})\subset [0,1],
h \neq 0,
\#\c{M}<\infty
.\end{equation}
For convenience of presentation we shall prefer the notation 
$\overline{m}=\pi(m)$. We also assume that there exists a set of primes 
$\c{P}$, a constant
$X\in \R_{>0}$ and a multiplicative function $\omega:\N\to\R_{\geq 0}$ 
such that, when letting
\[r_{\!\c{M},h}(k)
:=
\sum_{\substack{m \in \c{M}\\b|\overline{m}}}h(m)
-\frac{\omega(b)}{b}X
,
\
(b \in \N),
\]
there exist constants $\tau \in (0,1]$, 
$\kappa \in \N$,
$A_1\geq 1$ and $A_2\geq 1$
such that
\begin{equation}
\label{def:tau} 
\sum_{1\leq b \leq X^\tau (\log X)^{-A_1}}
\mu(b)^2 
4^{\nu(b)}
|r_{\!\c{M},h}(b)|
\leq 
A_2 \frac{X}{(\log X)^{\kappa+1}}
,\end{equation}
where
the function $\omega$ 
enjoys the following properties for some constants $\kappa\geq 1,A>1$,
\begin{equation}
\label{cond:omegg}
0\leq \omega(p)<p \ (p \in \c{P}),
\
\omega(p)=0
\
(p\notin \c{P}) 
\end{equation}
\begin{equation}
\label{cond:omegd} 
\prod_{w_1\leq p <w}
\Big(1-\frac{\omega(p)}{p}\Big)^{-1}
\leq 
\Big(\frac{\log w}{\log w_1}\Big)^\kappa
\Big(1+\frac{A}{\log w_1}\Big),
\
2\leq w_1 < w.
\end{equation}
We furthermore demand that 
\begin{equation}
\label{cond:onlym}
m \in \c{M},p| \overline{m}
\Rightarrow
p \in \c{P} 
,\end{equation}
and that  
that there exists a constant $\mu_0>0$ such that 
\begin{equation}
\label{cond:suppo} 
\max\{
|\overline{m}|: m \in \c{M}\}
\leq X^{\tau \mu_0}
.\end{equation}
Lastly, we shall say that the property $\mathbf{Q}(u,v)$ holds for two real positive
numbers
$u<v$ if
\begin{equation}
\label{cond:Q}
\mathbf{Q}(u,v):
\
\
\
\
\sum_{\substack{X^{1/v}\leq p \leq X^{1/u} \\ p \in \c{P}}}
\
\sum_{\substack{m \in \c{M}\\p^2|\overline{m}}}h(m)
\ll
\frac{X}{\log X} \prod_{\substack{ p\in \c{P}\\ p<X^{1/v}}}
\Big(1-\frac{\omega(p)}{p}\Big)
.\end{equation}

Before stating the main theorem in this section
recall the definition of $f=f_\kappa,F=F_\kappa$ and $\beta_\kappa$ in~\cite[Th.6.1]{MR2458547}
through certain differential equations.
The inequality
$
\beta_\kappa
<
\nu_\kappa
$
is proved
for $\kappa\geq 200$
in~\cite[Th.2]{MR1485427};
here $\nu_\kappa$ 
is the Ankeni-Onishi sieving limit\cite{MR0167467}
that satisfies
$
\nu_\kappa\sim c \kappa
$ as $\kappa \to+\infty$, where
\[
c=\frac{2}{e\log 2}
\exp\Bigg(
\int_0^2
\frac{e^u-1}{u}\mathrm{d}u\Bigg)
=2.445\ldots
\
.\]
In particular
there exists an absolute positive constant $c_0$
such that
$\beta_\kappa
\leq c_0 \kappa$
for all 
$\kappa\geq 1$.
\begin{theorem}
[Diamond--Halberstam--Richert]
\label{thm:weighted0}
Assume that $\kappa\geq 1,\c{M},X,\omega,\mu_0$ are as above,
that each one of the conditions~\eqref{cond:hhh}-\eqref{cond:suppo} holds,
that $r$ is a natural number satisfying
$r>N(u,v;\kappa,\mu_0,\tau)$,
where
\[
N(u,v;\kappa,\mu_0,\tau)
:=
\tau \mu_0 u-1+\frac{\kappa}{f_\kappa(\tau v)}
\int_u^v
F_\kappa\Big(v\Big(\tau-\frac{1}{s}\Big)\Big)
\Big(1-\frac{u}{s}\Big)
\frac{\mathrm{d}s}{s}
\]
and $u,v$ satisfy 
$\mathbf{Q(u,v)},
\tau v>\beta_\kappa$ as well as $1/\tau<u<v$.
Then
we have 
\[
\#
\Big\{
m \in \c{M},
P^-(\overline{m})\geq X^{1/v},
\Omega(\overline{m})\leq r
\Big\}
\gg
X \prod_{\substack{ p\in \c{P}\\ p<X^{1/v}}}
\Big(1-\frac{\omega(p)}{p}\Big)
.\]
\end{theorem}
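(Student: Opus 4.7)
The strategy is to follow the classical Diamond--Halberstam--Richert weighted sieve (\cite[Theorem 11.1]{MR2458547}), tracking the weight function $h$ throughout. Every ingredient of that proof---Jurkat--Richert linear sieves of dimension $\kappa$, Buchstab-type identities, and Richert's combinatorial inequality for weighted prime-factor counts---is linear in the sequence being sifted, so replacing indicator counts by $h$-weighted counts $\sum_{m \in \mathcal{M},\, b \mid \overline{m}} h(m)$ transfers the argument once the remainder hypothesis~\eqref{def:tau} is in place, which is precisely the analogue of the classical $R_D$-condition in the present weighted setting.

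First I would set $z = X^{1/v}$ and $y = X^{1/u}$ and introduce the Richert functional
\begin{equation*}
T := \sum_{\substack{m \in \mathcal{M} \\ P^-(\overline{m}) \geq z}} h(m) \left(1 - \sum_{\substack{z \leq p < y \\ p \in \mathcal{P},\, p \mid \overline{m}}} \left(1 - \frac{\log p}{\log y}\right)\right).
\end{equation*}
The pointwise Richert inequality, combined with the support bound $\overline{m} \leq X^{\tau \mu_0}$ from~\eqref{cond:suppo} and the use of $\mathbf{Q}(u,v)$ from~\eqref{cond:Q} to discard multiples of $p^2$ with $p \in [z,y)$, shows that the bracketed factor is non-positive whenever $\Omega(\overline{m}) > r$ for any $r > N(u,v;\kappa,\mu_0,\tau)$. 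Since $h(m) \leq 1$ we therefore obtain an upper bound
\begin{equation*}
T \leq C \cdot \#\bigl\{m \in \mathcal{M}: P^-(\overline{m}) \geq z,\, \Omega(\overline{m}) \leq r\bigr\}
\end{equation*}
for some explicit positive constant $C$, so it suffices to lower-bound $T$ by a quantity of size $X V(z)$, where $V(z) := \prod_{p < z,\, p \in \mathcal{P}}(1 - \omega(p)/p)$.

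Next I would split $T = T_1 - T_2$, where $T_1$ is the full weighted sifted sum and $T_2$ is the weighted sum over primes $p \in [z, y)$ dividing $\overline{m}$. To $T_1$ I apply the Jurkat--Richert lower bound sieve of dimension $\kappa$ with sifting level $D = X^\tau (\log X)^{-A_1}$ (valid because $\tau v > \beta_\kappa$); this produces the main term $X V(z) f_\kappa(\tau v)$ with remainder absorbed by~\eqref{def:tau}. For each $p \in [z, y)$ I apply the corresponding upper bound sieve to the subsequence $\{m : p \mid \overline{m}\}$, whose density is $\omega(p)/p \cdot X$ by the same hypothesis, and use the Mertens-type estimate~\eqref{cond:omegd} to sum over $p$; under the substitution $s = \log X / \log p$ this sum becomes the integral appearing in $N(u,v;\kappa,\mu_0,\tau)$. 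Subtracting yields
\begin{equation*}
T \geq X V(z) \left( f_\kappa(\tau v) - \kappa \int_u^v F_\kappa\left(v\left(\tau - \frac{1}{s}\right)\right)\left(1 - \frac{u}{s}\right) \frac{ds}{s} \right) + O\left(\frac{X}{(\log X)^{\kappa+1}}\right),
\end{equation*}
and the hypothesis $r > N(u,v;\kappa,\mu_0,\tau)$ ensures that the main term is strictly positive, completing the argument.

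The main technical obstacle is verifying that the weight $h$ does not spoil the level-of-distribution estimate at any stage: condition~\eqref{def:tau} is designed exactly for this, providing the $4^{\nu(b)}$-weighted error bound that the Jurkat--Richert sieve demands, and this is the sole place where the arbitrariness of $h$ intervenes in a non-trivial way. A secondary conceptual point is that the Richert combinatorial inequality is a pointwise statement about each individual element $m$, so multiplying through by the non-negative scalar $h(m)$ preserves it unchanged; consequently no new ideas beyond the classical framework are required, and the extension is essentially formal once~\eqref{def:tau} is in hand.
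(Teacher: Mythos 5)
Your proposal is correct and follows essentially the same route as the paper: both recast the Diamond--Halberstam--Richert weighted sieve by inserting the weight $h$ into the Richert functional, observe that the pointwise combinatorial inequality survives multiplication by the non-negative scalar $h(m)$, use~\eqref{def:tau} to absorb all information about $\c{M}$ and $h$ into $X$ at the level-of-distribution step, invoke $\mathbf{Q}(u,v)$ to discard elements with $p^2\mid\overline{m}$ for $p\in[X^{1/v},X^{1/u}]$, and finally use $h\leq 1$ to pass from the weighted sum to the cardinality. The only cosmetic difference is that you fix the Richert parameter $\lambda=1$ from the outset while the paper carries the general $\lambda$ of~\cite[\S 11]{MR2458547} and makes the same choice as there at the end; this does not affect the argument.
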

\begin{proof}
The proof is merely a careful recast
of the proof of Theorem $11.1$ in~\cite[\S 11]{MR2458547}.
In place of the function defined in~\cite[Eq.(11.6)]{MR2458547}
we shall use the following function that combines 
the classical weights related to
the weighted sieve in addition to the new weight $h$,
\[
W_h(\c{M},\c{P},z,y,\lambda)
:=\sum_{\substack{
m \in \c{M}\\ \gcd(\overline{m},P(z))=1}}
h(m)
\Bigg\{
\lambda-
\sum_{\substack{p \in \c{P},p|\overline{m}\\ z\leq p < y}}
\Big(1-\frac{\log p}{\log y}\Big)
\Bigg\}
,\]
where 
$P(z):=\prod\{p:p\in \c{P},p<z\}$.
A statement analogous to~\cite[Eq.(11.9)]{MR2458547} can be verified once
the entities 
$S(\c{A},\c{P},X^{1/v})$
and
$S(\c{A}_p,\c{P},X^{1/v})$ are replaced
by
\[
\sum_{\substack{m \in \c{M}\\
\gcd(\overline{m},P(X^{1/v}))=1
}}
h(m)
 \ \ \ \text{ and } \ 
\sum_{\substack{m \in \c{M}, p|\overline{m}
\\
\gcd(\overline{m},P(X^{1/v}))=1
}}
h(m)
\]
respectively.
The rest of the arguments in~\cite[\S 11.2]{MR2458547} are carried automatically to our setting
since, once the level of distribution result~\eqref{def:tau} is applied, all information
regarding $\c{M}$ and $h$ is absorbed into  $X$.
The only point of departure
is the use of various sieve estimates from previous chapters 
of the book. These sieve estimates boil down to the use of the Fundamental lemma of sieve theory 
and the Selberg sieve, both of which can be adapted to our setting.
This is due to the non-negativity of the function $h$, which allows various combinatorial inequalities
to be adapted once multiplied by $h$.
One example of this  
is in the case of an upper bound sieve, say $\lambda^+$:
opening up the convolution in the right side of  
$(1\ast \mu) \leq (1\ast \lambda^+)$
gives
\[
\sum_{\substack{m \in \c{M}\\ \gcd(\overline{m},P(z))=1}}
h(m)
\leq 
\sum_{k|P(z)} \lambda_k^+
\sum_{\substack{m \in \c{M}\\ k|\overline{m}}}h(m)
,\]
and one can now use~\eqref{def:tau} to absorb $\c{M}$ and $h$ in $X$
for
the rest of the argument.

For the proof of the present 
theorem it remains to adapt the arguments in~\cite[\S 11.3]{MR2458547}.
First, the contribution towards $\sum_{m}h(m)$ 
of those 
$m \in \c{M}$
such that $\overline{m}$ is divisible by the square of a prime $p\in \c{P}$ in the range
$X^{1/v}\leq p \leq  X^{1/u}$
can be safely ignored 
due to condition~\eqref{cond:Q}.
An inspection of~\cite[\S 11]{MR2458547}
reveals
that condition $\mathbf{Q_0}$
in~\cite[Eq.(11.2)]{MR2458547} is used in the proof of~\cite[Th.11.1]{MR2458547}
only
to deal with this particular sum over primes in $\c{P} \cap [X^{1/v},X^{1/u}]$.
We are thus free to focus
our attention exclusively
on the contribution of the
elements of the set
\[\c{M}':=\big\{m'\in\c{M}:
\text{ there is no prime }
 p \in \c{P} \cap [X^{1/v},X^{1/u}]
\text{ such that }
p^2\mid \overline{m}'\big\}
.\]  
The last inequality in~\cite[p.g.140]{MR2458547}
becomes
\[
\sum_{\substack{X^{1/v}\leq p <X^{1/u}\\p\in \c{P},p|\overline{m}'}}
\Big(1-\frac{u \log p}{\log X}
\Big)
\geq
\Omega(\overline{m}')
-\frac{u \log |\overline{m}'|}{\log X}
,\] which, when multiplied by $h(m')$, gives, as in~\cite[p.g.141]{MR2458547},
\[
W_h(\c{M}',\c{P},z,y,\lambda)
\leq
(r+1)\sum_{\substack{m'\in \c{M}', \Omega(\overline{m}')\leq r \\ \gcd(\overline{m}',P(X^{1/v}))=1}}h(m')
\]
for the choice of $\lambda$ and $r$ made in~\cite[p.g.141]{MR2458547}.
The property $h(\c{M})\subset [0,1]$ 
shows that
\begin{align*}
\#\{m\in \c{M}:P^-(\overline{m})\geq X^{1/v}, \Omega(\overline{m})\leq r\}
&\geq
\#\{m'\in \c{M}': P^-(\overline{m}')\geq X^{1/v},
\Omega(\overline{m}')\leq r\}
\\
&\geq
\sum_{\substack{m'\in \c{M}, \Omega(\overline{m}')\leq r \\ \gcd(\overline{m}',P(X^{1/v}))=1}}h(m')
\geq
\frac{1}{r+1}
W_h(\c{M}',\c{P},z,y,\lambda),
\end{align*}
which allows 
the rest of the proof of~\cite[Th.11.1]{MR2458547}
to be adapted to our case.
Finally,
the choice
of the constants $v$ and $r$
given in our theorem 
is borrowed from the inequalities succeeding~\cite[Eq.(11.22)]{MR2458547}.
\end{proof}
\begin{remark}
\label{rem:recov}
The setting of
Theorem~\ref{thm:weighted0}
includes that of~\cite[Th.1.1]{MR2458547};
indeed, one can choose
$
(\c{M},\pi,h)
=(\c{A}, \mathrm{id}, 1)
$.
\end{remark} 
\begin{remark}
\label{rem:uv12}
In most cases
it is easy to verify 
$\mathbf{Q}(u,v)$ for all $u,v>0$,
however this is not the case for the problem of prime factors of 
$x_1\cdots x_n$ for
integer solutions $\b{x}=(x_1,\ldots,x_n)$ of general Diophantine equations,
since, as explained in \S\ref{s:localden}, quite often
a prime could divide two coordinates of $\b{x}$.
\end{remark}  
\begin{remark}
\label{rem:betterr}
A table of estimates for $\beta_\kappa$ for $1\leq \kappa\leq 10$ is given in~\cite[p.g.227]{MR2458547}.
Furthermore,~\cite[Eq.(11.21)]{MR2458547}
contains estimates for $r$ that are slightly weaker but simpler than that of~\cite[Th.11.1]{MR2458547}.
For example, the choice $\xi=\beta_\kappa$ in~\cite[Eq.(11.21)]{MR2458547}
shows that, as long as 
$\mathbf{Q}\big(\frac{2\beta_\kappa-1}{\tau \beta_\kappa},\frac{2\beta_\kappa-1}{\tau}\big)$ holds,
then the conclusion of Theorem~\ref{thm:weighted0} remains valid with 
$v=(2\beta_\kappa-1)/\tau$ and 
for all natural numbers $r$ satisfying
\begin{equation}
\label{eq:lower1r}
r\geq \mu_0-1+(\mu_0-\kappa)(1-1/\beta_\kappa)+(\kappa+1)\log \beta_\kappa
.\end{equation}
In
fact~\cite[Eq.(11.21)]{MR2458547}
with $\xi=\beta_\kappa$ shows that 
if 
$\mathbf{Q}(u,v)$ holds for some
$u>1/\tau$ and any $v>u$, then 
letting
\[
v':=\frac{\beta_\kappa-1}{\tau-1/u}
\]
we deduce that  
the conclusion of Theorem~\ref{thm:weighted0} still holds with
any $r$ satisfying
\begin{equation}
\label{eq:lower2r}
r\geq \tau \mu_0 u-1
+
\Big(\kappa+\frac{u}{v'}\beta_\kappa\Big)
\log \frac{v'}{u}
-\kappa
\Big(1-\frac{u}{v'}\Big)
.\end{equation}
\end{remark} 
To prove Theorem~\ref{thm:weighted}
we take 
\[
\c{M}:=\Big\{\b{x}\in \N^n:
\b{f}(\b{x})=\b{0},\bfx\equiv \bfy\md{W},|\b{x}|\leq B
\Big\},
\pi(\bfx):=\widetilde{\bfx},
\]
and
%we fix a weight $w$ supported on $[0,1]$ such that  $B^{n-Rd} \ll \mathcal{J}_w(\bff,W)\ll B^{n-Rd}$, that this can be done is due to~\eqref{cond:units}. Furthermore 
we let 
\[
h(\bfx):=\prod_{i=1}^n %w\Big(\frac{x_i}{B/2}\Big) 
w\bigg( \frac{x_i}{B} -\frac{\zeta_i}{2|\boldsymbol{\zeta}|} \bigg) 
.\]
Then for $\c{P}$ being the set of all primes $p>z_0$,
$g$ as in~\eqref{def:ome},
$\theta'$ as in~\eqref{def:the}
and any $0<\epsilon < \theta'$
we can verify all
conditions~\eqref{def:tau}-\eqref{cond:suppo}
with 
\[
X:=\mathcal{J}_w(\bff,W)\mathfrak{S}(\bff,W)
B^{n-Rd},
\omega(b):=b g(b),
\kappa:=n,
\tau:=
\theta'-\epsilon
,
\mu_0=\frac{n}{n-Rd}\frac{1+\epsilon}{\theta'-\epsilon}
\]
with an argument that is identical to that in~\S\ref{s:flst}.
It remains to check condition $\mathbf{Q}(u,v)$
and for this we note that in our setting,
the sum in~\eqref{cond:Q}
is at most 
\[
\sum_{X^{1/v}< p \leq X^{1/u}}
\sum_{\substack{\bfk \in \N^n\\ \widetilde{\bfk}=p^2}}
\sum_{\substack{\b{x} \in \c{A}\\ k_i|x_i}}
\prod_{i=1}^n w\bigg( \frac{x_i}{B} -\frac{\zeta_i}{2|\boldsymbol{\zeta}|} \bigg) .\]
Invoking Lemma~\ref{lem:levofdi} we see that, if
$
u>
2(n-Rd)
\rho$,
where $\rho$ is defined in~\eqref{def:rrr},
this is 
\[
\ll
B^{n-Rd}
\Bigg(
\sum_{X^{1/v}\leq p \leq X^{1/u}}
p^{-2}
\sum_{\substack{\bfk \in \N^n\\ \widetilde{\bfk}=p^2}}
\varpi(\bfk)
\Bigg)
+B^{n+\eps}
\Bigg(
\sum_{X^{1/v} <  p \leq X^{1/u}}p^{-2}
\sum_{\substack{\bfk \in \N^n\\ \widetilde{\bfk}=p^2}}
E(B;\b{k})
\Bigg)
.\]
Assuming
$
\max\{
(d^2-1)R^2
2^{d-1}
,
(d-1)R^2
2^{d-1}
+2(R+1)
\}
<
\mathfrak{B}(\b{f})
$,
we obtain via 
Corollary~\ref{cor2}
that the first sum over $\bfk$ above is $\ll 1$, thus, when $v> 0$,
the first term contributes
\[
\ll
B^{n-Rd-\frac{(n-Rd)}{v}}
\ll 
\frac{ B^{n-Rd}}{ (\log B)^{n}}
.\]
It remains to verify that there exists $\epsilon'>0$ such that 
\begin{equation}
\label{eq:holds2}
\sum_{X^{1/v} <  p \leq X^{1/u}}p^{-2}
\sum_{\substack{\bfk \in \N^n\\ \widetilde{\bfk}=p^2}}
E(B;\b{k}) 
\ll B^{-\epsilon'-Rd}
.\end{equation}
For this we note that 
each $\epsilon_{i,2}$ is at least $\frac{1}{2}$
owing to $K\geq \max\{Rd,R^2(d-1)\}$ 
and $K\geq 1$.
Thus the error term above becomes
\[
\ll
\sum_{i=1}^3 B^{-\epsilon_{i,1}}
\sum_{X^{1/v} <  p \leq X^{1/u}}p^{-2+2\epsilon_{i,2}}
\ll
\sum_{i=1}^3 B^{-\epsilon_{i,1}+\frac{(n-Rd)}{u}(2\epsilon_{i,2}-1)}
.\]
Therefore, if 
\[
u>\max\Bigg\{\frac{(n-Rd)
(2\epsilon_{i,2}-1)}{\epsilon_{i,1}-Rd}:1\leq i \leq 3
\Bigg\}
\]
then~\eqref{eq:holds2} holds.
Now define 
$u_0:=(1+\epsilon)
\max
\big
\{u_1,1/(\theta'-\epsilon),
2(n-Rd)\rho
\big
\}$, where
\[
u_1:=
\max\Bigg\{\frac{(n-Rd)
(2\epsilon_{i,2}-1)}{\epsilon_{i,1}-Rd}:1\leq i \leq 3
\Bigg\}.\] 
Then
applying~\eqref{eq:lower2r}
with $u=u_0$ and
$v':=(n c_n-1)/(\theta'-\epsilon-1/u_0)$,
allows to take 
\[
r\geq 
\frac{n}{n-Rd}
(1+\epsilon) 
 u_0-1
+
n\Big(1+\frac{u_0}{v'}c_n\Big)
\log \frac{v'}{u_0}
-n
\Big(1-\frac{u_0}{v'}\Big)
,\]  
where 
$c_n:=\beta_n/n$ satisfies
$\lim_{n\to+\infty}c_n=2.44\ldots$.
Letting 
$\epsilon>0$ be
arbitrarily close to zero 
concludes
the proof of
the lower bound claimed in Theorem~\ref{thm:weighted}.
This is because 
the quantities 
$u'',\widehat{u},\widehat{v}$ introduced in~\eqref{eq:epikoskafes}
and~\eqref{eq:epikoskafes2}
are such that for fixed $\b{f},n,d,R$ we have 
\[
\lim_{\epsilon\to 0}
(u_1,u_0,v)=
(u'',\widehat{u},\widehat{v})
.\]

To complete the proof of 
Theorem~\ref{thm:weighted}
it remains to verify the estimates regarding 
$\widehat{v}$ and
$
r_0
$,
where $r_0$ is defined in~\eqref{eq:epikoskafes3}.
It is easy to
see that $u''/(n-Rd)$ is a function of $K$ that is bounded away from $0$ and $+\infty$,
while a similar remark applies to $\rho$ and $\theta'$. 
This implies that $\widehat{u}\ll_{d,R} n$
and
noting that 
$\widehat{u}<\widehat{v}$,
one has
\[
r_0\ll_{d,R}
\widehat{u}+n
\log \frac{\widehat{v}}{\widehat{u}}
\ll_{d,R}
n(1+\log \frac{\widehat{v}}{\widehat{u}})
,\] 
where the implied constant is independent of $K$ and $n$.
The identity 
$n c_n-1=\tau \widehat{v}-\widehat{v}/\widehat{u}$ 
shows that 
\[
\widehat{v}/\widehat{u}
\ll n+\widehat{v}
\ll
n+\frac{n}{\theta'-1/\widehat{u}}
\ll
n,\]
 therefore 
$
r_0=O_{d,R}(n \log n)
$,
with an implied constant depending at most on $d$ and $R$.

\section{Multidimensional vector sieve}
\label{s:multidim}
The next lemma 
constitutes 
a generalisation of the vector sieve
of Br\"{u}dern and Fouvry~\cite{BF}
to arbitrarily many variables.
\begin{lemma}[Multidimensional vector sieve]
\label{lem:multi}
Let $n\in \N$ and assume that we are given 
$2$ sequences 
$\lambda_i^-,\lambda_i^+, (i=1,\ldots,n)$
such that for each $m \in \N$ and $1\leq i \leq n$ 
we have 
\begin{equation}
\label{eq:uplo}
(1\ast \lambda_i^-)(m)
\leq 
(1\ast \mu)(m)
\leq 
(1\ast \lambda_i^+)(m)
.\end{equation}
Then the following inequality holds for each  
$\b{m}\in \N^n$,
\[
\prod_{i=1}^n
(1\ast \mu)(m_i)
\geq  
\sum_{i=1}^n
(1\ast \lambda_i^-)(m_i)
\prod_{\substack{1\leq j \leq n\\j \neq i }}
(1\ast \lambda_j^+)(m_j)
-(n-1)
\prod_{i=1}^n
(1\ast \lambda_i^+)(m_i)
\]
\end{lemma}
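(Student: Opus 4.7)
The plan is to abbreviate $a_i := (1\ast \lambda_i^-)(m_i)$, $b_i := (1\ast \mu)(m_i)$, and $c_i := (1\ast \lambda_i^+)(m_i)$, so that hypothesis~\eqref{eq:uplo} reads $a_i\leq b_i\leq c_i$. Two elementary observations simplify matters: first, $b_i=\mathbf{1}_{m_i=1}\in\{0,1\}$ since $1\ast\mu$ is the indicator of $\{1\}$; second, $c_i\geq 0$ for every $i$, because $c_i\geq b_i\geq 0$. The claim then rewrites as
\[
\prod_{i=1}^n b_i+(n-1)\prod_{i=1}^n c_i\;\geq\; \sum_{i=1}^n a_i\prod_{j\neq i}c_j.
\]

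I would then split into two cases. In the first case, some $b_k$ vanishes; without loss of generality $b_1=0$, which forces $a_1\leq 0$. Since each $c_j\geq 0$, the summand $a_1\prod_{j>1}c_j$ is non-positive, while for $i\geq 2$ the inequality $a_i\leq c_i$ combined with $c_j\geq 0$ yields $a_i\prod_{j\neq i}c_j\leq \prod_j c_j$. Summing over $i$ gives $\sum_i a_i\prod_{j\neq i}c_j\leq (n-1)\prod_j c_j$, which is the required estimate since $\prod_i b_i=0$.

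In the remaining case every $b_i=1$, so $a_i\leq 1$ and $c_i\geq 1$. Using $a_i\prod_{j\neq i}c_j\leq \prod_{j\neq i}c_j$ on each summand (valid because the tail product is non-negative), the claim reduces to the purely analytic inequality
\[
1+(n-1)\prod_{i=1}^n c_i\;\geq\; \sum_{i=1}^n\prod_{j\neq i}c_j\qquad (c_i\geq 1).
\]
Dividing by $\prod_i c_i$ and substituting $y_i:=1-1/c_i\in[0,1)$, this transforms into
\[
\sum_{i=1}^n y_i+\prod_{i=1}^n(1-y_i)\;\geq\; 1,
\]
which is immediate from the Weierstrass product inequality $\prod_i(1-y_i)\geq 1-\sum_i y_i$, valid for $y_i\in[0,1]$ and provable by a one-line induction.

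There is no substantive obstacle; the only mildly non-obvious step is the reduction of the all-$b_i=1$ case to the Weierstrass inequality via the substitution $y_i=1-1/c_i$. An alternative route would be induction on $n$ using the two-dimensional Br\"udern--Fouvry inequality of~\cite{BF} as the base case, but the direct approach above is cleaner and avoids having to set up an inductive hypothesis that manipulates mixed products.
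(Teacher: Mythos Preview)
Your proof is correct and follows essentially the same route as the paper's: both arguments split on whether every $b_i=(1\ast\mu)(m_i)$ equals $1$, reduce the all-ones case to the Weierstrass product inequality via the substitution $1/c_i$ (your $y_i=1-1/c_i$ is the paper's $A_i=1-x_i$), and dispatch the remaining case by a direct count. The only cosmetic difference is that the paper first replaces each $a_i$ by $b_i$ using $\prod_{j\neq i}c_j\geq 0$ before splitting cases, whereas you handle the $a_i$'s directly within each case; the substance is identical.
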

\begin{proof}
In light
of~\eqref{eq:uplo}
it is sufficient to verify 
\begin{equation}
\label{eq:uplo1}
\prod_{i=1}^n
(1\ast \mu)(m_i)
\geq 
-(n-1)
\prod_{i=1}^n
(1\ast \lambda_i^+)(m_i)
+
\sum_{i=1}^n
(1\ast \mu)(m_i)
\prod_{\substack{1\leq j \leq n\\j \neq i }}
(1\ast \lambda_j^+)(m_j)
.\end{equation}
If
$m_i=1$ for all $i=1,\ldots,n$
then $(1\ast\lambda_i^+)(m_i)\geq 1$,
thus
the entities
$x_i:=1/(1\ast\lambda_i^+)(m_i)$
fulfill 
$0<x_i\leq 1$.
The inequality~\eqref{eq:uplo1}
becomes 
$
x_1\cdots x_n \geq -n+1+ (x_1+\cdots+x_n)$.
Letting 
$A_i=1-x_i$ the last inequality 
becomes 
$
(1-A_1)\cdots (1-A_n)
\geq 1-(A_1+\cdots+A_n)$,
which
is the
Weierstrass product inequality, see~\cite[Eq.(1)]{MR0265536}.
In the remaining case where there
exists $i$ with $m_i\neq 1$
we can assume that 
$(1\ast\lambda_i^+)(m_i)\neq 0$
for each such $i$,
for otherwise both sides of~\eqref{eq:uplo1}
vanish.
We
may now introduce
for each $1\leq i \leq n$ the variables
$x_i:=1/(1\ast\lambda_i^+)(m_i)$;
then~\eqref{eq:uplo1}
becomes
\[
n-1\geq
\sum_{\substack{1\leq i \leq n\\ m_i=1}}x_i.\]
The proof is concluded upon observing that 
the condition
$m_i=1$ implies 
$x_i\leq 1$.
\end{proof}

Our aim  
now becomes to prove a version of the Fundamental Lemma of sieve theory 
in the context of prime divisors of coordinates of integer zeros in varieties.
The exact form is given in Proposition~\ref{prop:presiev} and the rest of this section is devoted to its proof.
The quantity under consideration is the weighted
density of vectors $\bfx \in \c{A}$ with 
$|\bfx|\leq B$
such that  $\widetilde{\b{x}}$ does not have 
prime divisors in the range $p\leq z_1$
for any $z_1$ with 
$z_0<z_1\leq B$.
We prefer to keep the choice of $z_1$
unspecified in this section
and we shall only need the value
$z_1= (\log B)^A$ for $A>0$ independent of $B$
in~\S\ref{s:mainthm}.

For $\b{k} \in \N^n$ 
and
$y_1,y_2 \in \R$ with $y_1<y_2$
we
define 
\[
\mu(\b{k}):=\prod_{i=1}^n \mu(k_i)
\ \text{ and } \
P(y_1,y_2):=\prod_{y_1<p \leq y_2}p
.\] 
For a smooth  function $w:\R\to \R_{\geq 0}$ 
that is as in subsection~\ref{s:bwv1056largo},
%which vanishes outside $[0,2]$,
any $z_1>z_0$  
and any $\b{l} \in \N^n$ we let  
\begin{equation}
\label{def:www}
G(B,z_1;\b{l})
:=
\sum_{\substack{
\b{x}\in \c{A}\!, \ 
l_i|x_i, 
\\
p|x_1\cdots x_n \Rightarrow p>z_1
}}
\
\prod_{i=1}^n
%w\l(\frac{x_i}{B/2}\r)
w\bigg( \frac{x_i}{B} -\frac{\zeta_i}{2|\boldsymbol{\zeta}|} \bigg) 
.\end{equation} 
We are interested in 
estimating 
$G(B,z_1;\b{l})$
whenever
$\b{l}\in \N^n$ fulfills  $l_i|P(z_1,z)$,
where $z$ is any constant satisfying $z>z_1$.
This is analogous to~\cite[Prop.p.g.83]{BF}
and we shall also begin by proving the upper bound.
We shall use the upper and lower bound sieves,
$\lambda^+$ and $\lambda^-$, 
as defined
at the bottom of~\cite[p.g.84]{BF}.
Assume that $\lambda^+$ is an upper bound sieve 
supported in $[1,D_1]$
and  
note that 
the condition $\b{x}\equiv \b{y}\md{W}$
ensures that $p\nmid \widetilde{\bfx}$ 
for all $p\leq z_0$.
Recalling definition~\eqref{def:nw}
we see that whenever  
$l_i|P(z_1,z)$
then 
\[
G(B,z_1;\b{l})
\leq 
\sum_{\substack{\b{k} \in \N^n
\\ k_i |P(z_0,z_1)}}
N_w(B;(k_1l_1,\ldots,k_nl_n))
\prod_{i=1}^n
\lambda^+_{k_i}
.\]
Note that all
$\b{k}$ and $\b{l}$
above
must satisfy
\[
\gcd(\widetilde{\bfk},\widetilde{\bfl})=1=
\gcd\Big(\widetilde{\bfk} \ \widetilde{\bfl},\prod_{p\leq z_0}p\Big),
\mu(k_i)^2=1=\mu(l_i)^2
.\]
Recall definition~\eqref{def:rrr}
and assume
that 
\begin{equation}
\label{eq:levd9}
|\b{l}|
\leq \frac{B^{1/\rho}}{D_1\log B} 
.\end{equation}
Then Lemma~\ref{lem:levofdi}
shows that  if $K>R(R+1)(d-1)$ and~\eqref{eq:levd9}
holds then  
\[ 
N_w(B;
(k_1l_1,\ldots,k_nl_n)
)
=
\frac{\varpi(\bfk)}{\widetilde{\bfk}}
X_\bfl
+O\Big(
\frac{B^{n+\epsilon}
}{
\widetilde{\b{l}}
}
\frac{E(B;
(k_1l_1,\ldots,k_nl_n)
)}
{\widetilde{\b{k}}}
\Big)
,\]
where
\[
X_\bfl:=
\mathfrak{S}(\bff)
\mathcal{J}_w(\bff,W)
\frac{\varpi(\bfl)}{{\widetilde{\b{l}}}}
B^{n-Rd}
.\] 
We may now set 
\[
\Sig(D_1,z_1)=\sum_{\substack{\b{k} \in \N^n
\\ k_i |P(z_0,z_1)}}
\frac{\varpi(\bfk)}{\widetilde{\bfk}}\prod_{i=1}^n\lambda^+_{k_i}
\]
to obtain
\begin{equation}
\label{eq:bigsigma} 
G(B,z_1;\b{l})
\leq \Sig(D_1,z_1)
X_\bfl +O
\Bigg
( 
\frac{B^{n+\epsilon}}{\widetilde{\b{l}}} 
\sum_{\substack{|\bfk| \leq D_1\\
p|\widetilde{\bfk} \Rightarrow z_0<p \leq z_1}}
\frac{\mu(\b{k})^2}{\widetilde{\b{k}}}
E(B;
(k_1l_1,\ldots,k_nl_n)
)
\Bigg).
\end{equation}
\subsection{Bounds for $\varpi$.}
\label{s:bvarpi}
One has to be careful when 
adapting the approach \cite{BF} 
to homogeneous equations.
The reason is that in the case of
Lagrange's
equation there exists
a multiplicative function $\widetilde{\varpi}$ satisfying
$$
\varpi(\b{m})\leq \prod_{i=1}^n \widetilde{\varpi}(m_i)
$$ 
and such that for all large primes $p$ one has $\widetilde{\varpi}(p)\leq 2$, see~\cite[Lem.12,part(iii)]{BF}.
It is easy to see that bounds of this quality
fail to hold 
rather 
spectacularly
for systems of 
forms $\bff=\b{0}$ as in Theorem~\ref{thm:mainvector}.
Indeed,
\begin{equation*}
\begin{split}
\varpi(p,\ldots, p)&=\sig_p^{-1}\lim_{l\rightarrow\infty}p^{-l(n-R)}\sharp\Big\{\bfx\mod{p^l}: \bff(p\bfx)\equiv 0 \mod{p^l}\Big\}\\
&= p^{Rd}\sig_p^{-1}\lim_{l\rightarrow\infty}p^{-(l-d)(n-R)}\sharp\Big\{\bfx\mod{p^{l-d}}: \bff(\bfx)\equiv 0 \mod{p^{l-d}}\Big\}\\
&= p^{Rd}.
\end{split}
\end{equation*} 
To confront this issue
our first task is to control the contribution 
towards $\Sigma(D_1,z_1)$
of integer vectors $\bfk$ 
such that there exists $i<j$
with $k_{ij}:=\gcd(k_i,k_j)$ 
attaining a large value.
Define
\[
\Sig^*(D_1,z_1)=\sum_{\substack{\b{k} \in \N^n\\ k_i |P(z_0,z_1)\\ \max k_{ij}\leq \Del}}
\frac{\varpi(\bfk)}{\widetilde{\bfk}}\prod_{i=1}^n\lambda^+_{k_i}
\]
and
recall the definition of $\Upsilon$ in~\eqref{def:theY}.
\begin{lemma}\label{lem3}
Assuming  
$
\max\!\big\{
(d-1)R^2
2^{d-1}
+(R+1)
(\Upsilon+1),
(d^2-1)R^2
2^{d-1}
\big\}
\!<\!\mathfrak{B}(\b{f})
$,
one has   
$$\Sig(D_1,z_1)-\Sig^*(D_1,z_1)\ll \Del^{-1+\eps} (\log z_1)^n.$$
\end{lemma}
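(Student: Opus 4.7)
Since $|\lambda^+_k|\leq 1$ and the $k_l$ are squarefree divisors of $P(z_0,z_1)$, one has
\[
|\Sigma(D_1,z_1)-\Sigma^*(D_1,z_1)|\leq\sum_{i<j}\sum_{\substack{m>\Delta\\ \mu(m)^2=1,\,m\mid P(z_0,z_1)}}\sum_{\substack{\bfk:\,m\mid k_i,\,m\mid k_j\\ k_l\mid P(z_0,z_1)}}\frac{\varpi(\bfk)}{\widetilde{\bfk}},
\]
after breaking the condition $\max k_{ij}>\Delta$ by a union bound over pairs $(i,j)$ and setting $m=k_{ij}$ (imposing only $m\mid\gcd(k_i,k_j)$ in place of equality may over-count, which is harmless for an upper bound). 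By multiplicativity of $\varpi$ and squarefreeness of each $k_l$, the innermost sum factors as an Euler product over primes $p$: primes $p\mid m$ force $j_i=j_j=1$ in the exponent vector $\bfj=(\nu_p(k_l))_l\in\{0,1\}^n$ and contribute $L_p^{(ij)}:=\sum_{\bfj:\,j_i=j_j=1}\varpi(p^{\bfj})p^{-|\bfj|_1}$, while primes $p\leq z_1$ with $p\nmid m$ contribute the unrestricted local factor $L_p:=\sum_{\bfj\in\{0,1\}^n}\varpi(p^{\bfj})p^{-|\bfj|_1}$.

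The heart of the argument is bounding $L_p^{(ij)}$, and I would split the sum over $\bfj$ at $|\bfj|_1=\Upsilon+2$. For $2\leq|\bfj|_1\leq\Upsilon+1$ the hypothesis $\mathfrak{B}(\bff)>(d-1)R^22^{d-1}+(R+1)(\Upsilon+1)$ admits application of Corollary~\ref{cor2}, giving $\varpi(p^{\bfj})=1+O(p^{-\lam})$, so the partial sum is $\ll p^{-2}(1+1/p)^{n-2}\ll p^{-2}$. For $|\bfj|_1\geq\Upsilon+2$ the cruder Corollary~\ref{corome} gives $\varpi(p^{\bfj})\ll p^{\Upsilon}$, but the saving $p^{-|\bfj|_1}\leq p^{-\Upsilon-2}$ keeps the partial sum at $\ll 2^n p^{-2}$. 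Altogether $L_p^{(ij)}\ll 2^n/p^2$, and the same analysis (now including the $|\bfj|_1=1$ contribution) yields $L_p=1+O(n/p)+O(2^n/p^2)$.

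Combining the Euler factors, the product of the $L_p$ over $z_0<p\leq z_1$ is $\ll(\log z_1)^n$ by Mertens' theorem, so the innermost sum above is $\ll(\log z_1)^n(2^n)^{\omega(m)}m^{-2}$. A Rankin-style bound with exponent $1-\eps$ then gives
\[
\sum_{\substack{m>\Delta\\ m\mid P(z_0,z_1)}}\frac{(2^n)^{\omega(m)}}{m^2}\leq\Delta^{-1+\eps}\prod_{z_0<p\leq z_1}\Big(1+\frac{2^n}{p^{1+\eps}}\Big)\ll\Delta^{-1+\eps},
\]
and the factor $\binom{n}{2}$ coming from the pair-sum is absorbed into the implied constant.

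The \emph{main obstacle} is obtaining the improved local bound $L_p^{(ij)}\ll 2^n/p^2$ at primes dividing $m$: a direct application of Corollary~\ref{corome} alone yields only $L_p^{(ij)}\ll 2^n p^{\Upsilon-2}$, whereupon the ensuing series $\sum_m (2^n)^{\omega(m)}m^{\Upsilon-2}$ fails to deliver any negative power of $\Delta$ once $\Upsilon\geq 1$. The role of the strengthened rank hypothesis is precisely to enlarge the range of validity of Corollary~\ref{cor2} up to $|\bfj|_1\leq\Upsilon+1$, which is exactly what renders the splitting at $|\bfj|_1=\Upsilon+2$ both admissible and productive.
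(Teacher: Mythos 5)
Your proposal is correct and follows the paper's proof essentially step for step: the same union bound over pairs $(i,j)$, the same Euler-product factorisation of the inner sum into local factors, the same dichotomy on $|\bfj|_1$ (Corollary~\ref{cor2} for $|\bfj|_1\leq\Upsilon+1$, Corollary~\ref{corome} for large $|\bfj|_1$), and the same Mertens bound together with a tail sum over $m>\Del$. The one loose end, which the paper's own write-up shares in the form of its claimed $p^{-2}+O(p^{-2-\eps})$ for the restricted local factor, is that for $\Upsilon\notin\Z$ the single integer in the open interval $(\Upsilon+1,\Upsilon+2)$ is covered by neither of your two ranges and Corollary~\ref{corome} there only yields a term $\ll p^{\Upsilon-|\bfj|_1}$ with exponent strictly between $-2$ and $-1$; this is repaired at once by the monotonicity $\delta(\bfh)p^{-|\bfh|_1}\leq \delta(\bfj)p^{-|\bfj|_1}$ for $\bfh\geq\bfj$ recorded in \S\ref{s:localden}, which bounds every term of your $L_p^{(ij)}$ by $\varpi(p^{\boldsymbol{e}_i+\boldsymbol{e}_j})p^{-2}\ll p^{-2}$.
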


\begin{proof} 
The quantity under investigation
is $\ll \sum_{1\leq l_1<l_2\leq n}\calE(l_1,l_2)$, where 
$$\calE(l_1,l_2):=\sum_{\del >\Del} \mu(\del)^2
\hspace{-0,3cm}
\sum_{\substack{k_i|P(z_0,z_1)\\ \del |k_{l_1}, \del |k_{l_2}}} 
\frac{\varpi(\bfk)}{\widetilde{\bfk}}
.$$
We may now use the multiplicative properties of $\varpi$ to deduce that
\begin{equation*} 
\calE(l_1,l_2)
\ll \sum_{\del> \Del}
\Bigg(
\prod_{\substack{z_0<p\leq z_1\\ p|\del}}
\sum_{\substack{\bfj\in \{0,1\}^n\\ j_{l_1}=j_{l_2}=1}}
\frac{\varpi(p^{\bfj})}{p^{|\bfj|_1}}
\Bigg)
\Bigg(
\prod_{\substack{z_0<p\leq z_1\\ p\nmid\del}}
\sum_{\bfj\in \{0,1\}^n}
\frac{\varpi(p^{\bfj})}{p^{|\bfj|_1}}
\Bigg)
.\end{equation*} 
Fix $\eta \in (0,1/4)$
and
let us 
denote 
$s_0:=\Upsilon+1+\eta$.
By Corollary~\ref{corome} we obtain 
\[
\sum_{|\bfj|_1\geq s_0}
\frac{\varpi(p^{\bfj})}{p^{|\bfj|_1}}
\ll
p^\Upsilon
\sum_{s_0\leq s \leq n} {{n}\choose{s}} p^{-s}
\ll 
p^{-1-\eta}
.\]
The assumptions of our 
lemma
allow us to apply Corollary~\ref{cor2}
whenever $|\bfj|_1\leq s_0$.
Thus it supplies us with some $\lambda>0$
such that  
$\varpi(p^{\bfj})=1+O(p^{-\lambda})$,
which yields 
$$\sum_{\bfj\in \{0,1\}^n}
\frac{\varpi(p^{\bfj})}{p^{|\bfj|_1}} = 1+\frac{n}{p} +O(p^{-1-\eps})
\ \text{ and }  \
\sum_{\substack{\bfj\in \{0,1\}^n\\ j_{l_1}=j_{l_2}=1}}
\frac{\varpi(p^{\bfj})}{p^{|\bfj|_1}} 
= p^{-2} +O(p^{-2-\eps}),$$
for some $\epsilon>0$.
Assorting all related estimates we obtain for square-free $\del$ that
$$\prod_{\substack{z_0<p\leq z_1\\ p|\del}}
\sum_{\substack{\bfj\in \{0,1\}^n\\ j_{l_1}=j_{l_2}=1}}
\frac{\varpi(p^{\bfj})}{p^{|\bfj|_1}}
\ll
\del^{-2+\eps},$$
and 
\begin{equation*}
\begin{split}
\prod_{\substack{z_0<p\leq z_1\\ p\nmid\del}}
\sum_{\bfj\in \{0,1\}^n}
\frac{\varpi(p^{\bfj})}{p^{|\bfj|_1}}  
&\ll \prod_{z_0< p\leq z_1}\left(1+\frac{n}{p}+O(p^{-1-\eps})\right) \ll (\log z_1)^n.
\end{split}
\end{equation*}
These estimates prove that 
$\calE(l_1,l_2)\ll 
(\log z_1)^n
\sum_{\del>\Del}\del^{-2+\eps}
$,
which is sufficient.
\end{proof} 
For any square-free integer $m$ and 
index $1\leq i \leq n$ define 
\[
\varpi_i(m):=\varpi(1,\ldots,1,m,1,\ldots,1)
,\] where $m$ appears in the $i$-th position.
For $\eps>0$ define the multiplicative function
\[
\phi_\eps(m):=\prod_{\substack{p|m\\p>z_0}}(1+p^{-\eps})
.\]
Note that if assumptions of Corollary~\ref{cor2} hold for $|\bfj|_1=1$
then
there exists $\eps=\eps(\b{f})>0$ such that
$\sigma_p(p^{\boldsymbol{e}_i}|\bfx)=\frac{1}{p}+O(p^{-1-\eps})$.  
Enlarging $z_0$ and replacing $\eps$ by a smaller positive constant
if needed
yields the following result.
\begin{lemma}
\label{lem:general}
Assume that
$\mathfrak{B}(\bff)> 
\max\big\{
R^2 2^{d-1}(d^2-1),
R^2
2^{d-1}
(d-1)
+(R+1)
\big\}
$.
Then there exists $\eps=\eps(\b{f})>0$ such that for all square-free integers $m$,
\[\max_{1\leq i \leq n}\varpi_i(m) \leq \phi_\eps(m).\]
\end{lemma}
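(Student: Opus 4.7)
The proof is essentially a one‑line consequence of multiplicativity combined with the single‑prime estimate that the hint preceding the lemma has already assembled; the substance lies in verifying that the hypothesis on $\mathfrak{B}(\bff)$ is precisely what is needed to invoke Corollary~\ref{cor2} with $|\bfj|_1=1$.

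The plan is to proceed in three short steps. First, since $m$ is square‑free and $\varpi$ is defined as a product over primes dividing $\widetilde{\bfk}$, we have the factorisation
\[
\varpi_i(m)=\prod_{p\mid m}\varpi(p^{\boldsymbol{e}_i}),
\]
so it suffices to bound $\varpi_i(p)=\varpi(p^{\boldsymbol{e}_i})$ uniformly at each prime $p>z_0$ (as the $W$-trick of \S\ref{s:bwv1056largo} guarantees that the applications only involve such primes, which is why the product defining $\phi_\eps(m)$ is restricted to $p>z_0$).

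Second, to control $\varpi_i(p)$, I use its defining expression $\varpi_i(p)=\delta(\boldsymbol{e}_i)/\sigma_p$ and feed Corollary~\ref{cor2} the vector $\bfj=\boldsymbol{e}_i$, i.e.\ $|\bfj|_1=1$. The hypothesis of Corollary~\ref{cor2} for this choice of $\bfj$ reads
\[
\mathfrak{B}(\bff)>\max\bigl\{(d-1)R^2 2^{d-1}+(R+1),\,(d^2-1)R^2 2^{d-1}\bigr\},
\]
which is exactly the assumption of the present lemma. Corollary~\ref{cor2} then supplies some $\lambda>0$ (depending only on $\bff$) with
\[
\delta(\boldsymbol{e}_i)=1+O(p^{-\lambda})\qquad(p>z_0).
\]
Combined with the standard singular series estimate $\sigma_p=1+O(p^{-1-\eps'})$ valid for some $\eps'=\eps'(\bff)>0$ (as recorded before Corollary~\ref{corome}), this yields
\[
\varpi_i(p)=\frac{1+O(p^{-\lambda})}{1+O(p^{-1-\eps'})}=1+O\bigl(p^{-\min(\lambda,\,1+\eps')}\bigr).
\]

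Third, choose $\eps=\eps(\bff)\in(0,\min(\lambda,1+\eps'))$ small enough and enlarge $z_0$ if necessary so that the implied constant is absorbed, giving $\varpi_i(p)\leq 1+p^{-\eps}$ for every $p>z_0$. Taking the product over $p\mid m$ yields
\[
\varpi_i(m)=\prod_{p\mid m}\varpi_i(p)\;\leq\;\prod_{\substack{p\mid m\\ p>z_0}}\bigl(1+p^{-\eps}\bigr)=\phi_\eps(m),
\]
uniformly in $i$, which is the claim. There is no genuine obstacle: the only care needed is to match constants so that the adjustment of $z_0$ and $\eps$ mentioned in the sentence preceding the lemma indeed delivers the clean inequality $\varpi_i(p)\leq 1+p^{-\eps}$ rather than the weaker $1+O(p^{-\eps})$.
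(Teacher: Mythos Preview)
Your proof is correct and follows exactly the approach the paper takes: the paper's entire argument is the sentence immediately preceding the lemma, namely that Corollary~\ref{cor2} with $|\bfj|_1=1$ gives $\sigma_p(p^{\boldsymbol{e}_i}\mid\bfx)=p^{-1}+O(p^{-1-\eps})$, after which one enlarges $z_0$ and shrinks $\eps$ to obtain the clean inequality. Your three-step expansion of this---multiplicativity, the single-prime bound via $\varpi_i(p)=\delta(\boldsymbol{e}_i)/\sigma_p$, and the adjustment of constants---is precisely the intended reasoning.
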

Observe 
 that for all
$\bfd \in \N^n$ with $\mu(\bfd)^2=1$ the expression
\[
\frac{\varpi(\bfd)}{\prod_{i=1}^n\varpi_i(d_i)}
\]
is a function of the vector $(\gcd(d_i,d_j))_{1\leq i<j \leq n}$.
To see this, it is enough to consider the case when $\widetilde{\bfd}$
is divisible by a single prime, say $p$. We need to show that 
if 
$\bfh, \bfk \in \{0,1\}^n$
and 
\begin{equation}
\label{eq:minim}
i\neq j \Rightarrow
\min(k_i,k_j)
=
\min(h_i,h_j)
\end{equation}
then 
\begin{equation}
\label{eq:minimeq}
\frac{\varpi(p^{\bfk})}{\prod_{i=1}^n\varpi_i(p^{k_i})}
=
\frac{\varpi(p^\bfh)}{\prod_{i=1}^n\varpi_i(p^{h_i})}
.\end{equation}
Obviously this holds in the case that $\bfk=\bfh$
and we can therefore assume that $\bfk\neq \bfh$.
A little thought reveals that in this case~\eqref{eq:minim}
guarantees that there exist $l,m,i \neq j$ such that 
$(\bfk,\bfh)$ equals one of the following,
\[
(\boldsymbol{e}_l,\boldsymbol{0}),
(\b{0},\boldsymbol{e}_m),
(\boldsymbol{e}_i,\boldsymbol{e}_j)
.\]
For any such instance
we can verify that both sides of~\eqref{eq:minimeq}
equal $1$, hence our claim holds.
We have proved that there exists a function
$
\widehat{g}
:\N^{{n}\choose{2}}\to \R_{\geq 0}$ 
such that 
\[\mu(\bfd)^2=1
\Rightarrow
\varpi
(\bfd)=
\widehat{g}((d_{i,j}))
\prod_{i=1}^n
\varpi_i(d_i)
.\]
The function $\varpi_i(d_i)$
keeps track of the probability that $d_i|x_i$
and the function $\widehat{g}((d_{i,j}))$ 
takes values close to $1$
when the events $d_i|x_i$ 
are independent (in a suitable sense)
but can obtain larger values in general. 

Defining 
\[S((u_{i,j})):=\sum_{\substack{\b{k} \in \N^n\\k_i|P(z_0,z_1)\\(k_i,k_j)=u_{i,j}}}\prod_{i=1}^n\frac{\lambda^+_{k_i}\varpi_i(k_i)}{k_i}\]
enables us to write
\begin{equation}\label{eqnSigstar}
\Sig^*(D_1,z_1)=
\sum_{\substack{u_{i,j}\leq \Delta
\\1\leq i<j \leq n}}\widehat{g}((u_{i,j}))S((u_{i,j}))
.\end{equation}
We may now use 
the expression
$
(\mu\ast 1)((k_i/u_{i,j},k_j/u_{i,j}))
$
to detect the condition $(k_i,k_j)=u_{i,j}$,
thus inferring
\begin{equation}
\label{eq:2btrunc}
S((u_{i,j}))=
\sum_{\substack{(l_{i,j}) \in \N^{{n \choose 2}}\\1\leq i \neq  j \leq n\\ u_{i,j}l_{i,j}|P(z_0,z_1)}}
\hspace{-0,5cm}\mu(\b{l})
\prod_{i=1}^n
\l(\sum_{\substack{k\in \N\\k|P(z_0,z_1)\\ \xi_i|k}}
\frac{\lambda^+_k\varpi_i(k)}{k}
\r)
,\end{equation}
where
\[
\xi_i:=
\text{rad}\l(
\prod_{\substack{1\leq j \leq n \\ j\neq i}}u_{i,j}l_{i,j} 
\r)
\]
and $\text{rad}$ stands for the radical of a positive integer.
Under the assumptions of Lemma~\ref{lem:general} we thus obtain
the following estimate for all square-free integers $\delta$,
\[
\Bigg|
\sum_{\substack{k\in \N\\k|P(z_0,z_1)\\ \delta|k}}
\frac{\lambda^+_k
\varpi_i(k)}{k}
\Bigg|
\leq 
\frac{\phi_\eps(\delta)}{\delta} 
\prod_{z_0<p\leq z_1}(1+p^{-1}+p^{-1-\eps})
\ll \frac{\phi_\eps(\delta)}{\delta}  \log z_1
.\]
Note that the succeeding inequality holds 
for all divisors $m'$ of $m$, 
$$\frac{\phi_\eps(m)}{m} \leq \frac{\phi_\eps(m')}{m'}.$$
Letting $\xi_i^*$ be the radical of $\prod_{j\neq i}l_{i,j}$
and using the last inequalities with
$\delta=m=\xi_i$ and $m'=\xi_i^*$ 
allows us to truncate the sum in~\eqref{eq:2btrunc}
to the range $l_{i,j}\leq \Delta^{B_1}$, 
where $B_1>0$ is a constant that will be chosen in due course.
The contribution of $l_{1,2}>\Delta^{B_1}$ is 
\[
\ll
(\log z_1)^n
\sum_{\substack{l_{i,j}\leq D_1,l_{i,j}|P(z_0,z_1)\\l_{1,2}>\Delta^{B_1}}}
\frac{\mu(\bfl)^2}{\widetilde{\boldsymbol{\xi}^*}}
\prod_{i=1}^n \phi_\eps(\xi_i^*)
,\]
where $D_1$ is the support of $\lambda^+$. 

We may now use the inequality
\[
\phi_\eps(\xi_i^*)\leq \prod_{j\neq i}\phi_\eps(l_{i,j})
\]
to obtain
\[
\prod_{i=1}^n \phi_\eps(\xi_i^*)
\leq \prod_{1\leq i\neq j\leq n}\phi_\eps(l_{i,j})^2
.\]
Hence the last sum is 
\[
\leq
\sum_{\substack{l_{1,2}>\Delta^{B_1}}}
\frac{\mu(\bfl)^2}{\xi_1^*\cdots \xi_n^*}
\prod_{1\leq i\neq j\leq n}\phi_\eps(l_{i,j})^2
.\]
This is really a summation over the variables $l_{1,2},\ldots,l_{n-1,n}$
because each expression $\xi_i^*$ is a function of some of
these variables.
We first perform a summation over $l_{n-1,n}$.
Recalling that 
\[
\xi_i^*=\text{rad}\big(\prod_{j\neq i}l_{i,j}\big)
\]
we see that only $\xi_{n-1}^*$ and $\xi_n^*$ depend on $l_{n-1,n}$,
since they satisfy
\[
\xi_n^*=[l_{n-1,n},\xi_n^{**}],
\xi_{n-1}^*=[l_{n-1,n},\xi_{n-1}^{**}],
\]
where both $\xi_{n-1}^{**}$ and $\xi_{n}^{**}$
are defined as 
$\xi_{n-1}^{*}$ and $\xi_{n}^{*}$
but with the variable $l_{n-1,n}$ missing, i.e.
\[
\xi_{n-1}^{**}:=\text{rad}\big(\prod_{j\neq n-1,n}l_{n-1,j}\big),
\xi_{n}^{**}:=\text{rad}\big(\prod_{j\neq n-1,n}l_{n,j}\big)
.\]
Hence the sum over $l_{n-1,n}$ is 
\[
\sum_{l_{n-1,n}}\frac{\mu(l_{n-1,n} )^2\phi_\eps(l_{n-1,n})^2}{[l_{n-1,n},\xi_{n-1}^{**}][l_{n-1,n},\xi_{n}^{**}]}
,\]
which equals
\[
\frac{1}{\xi_{n-1}^{**}\xi_{n}^{**}}
\sum_{l_{n-1,n}}
\frac{\mu(l_{n-1,n})^2\phi_\eps(l_{n-1,n})^2}{l_{n-1,n}^2}
\gcd(\xi_{n-1}^{**},l_{n-1,n})
\gcd(\xi_{n}^{**},l_{n-1,n})
.\]
The last sum is 
\[\leq 
\prod_{p|\xi_{n-1}^{**}\xi_{n}^{**}}(2+2p^{-\eps}+p^{-2\eps})
\prod_p
(1+p^{-2}(1+p^{-\eps})^2)
\ll \tau(\xi_{n-1}^{**})^A
\tau(\xi_{n}^{**})^A
,\]
where $A=3$.
Of course we can bound any
$\xi_k^{**}$ by the product of all available
variables except $l_{n-1,n}$, i.e.
$\prod_{\{i,j\}\neq \{n-1,n\}}l_{i,j}$,
thus we obtain 
\[
\ll
\frac{1}{\xi_{n-1}^{**}\xi_{n}^{**}}
\prod_{\{i,j\}\neq \{n-1,n\}}\tau(l_{i,j})^{2A}
.\]
The process above is the first step of a finite induction that eliminates all variables $l_{i,j}$,
beginning from $l_{n-1,n}$ and terminating with $l_{1,2}$.
At each step expressions of the form   
\[
\sum_{\substack{l_{1,2}>\Delta^{B_1}}}
\frac{\mu(\bfl)^2}{\xi_1'\cdots \xi_n'}
\Oprod_{1\leq i\neq j\leq n}\tau(l_{i,j})^A
\]
are bounded by 
\[\ll
\sum_{\substack{l_{1,2}>\Delta^{B_1}}}
\frac{\mu(\bfl)^2}{\xi_1^{''}\cdots \xi_n^{''}}
\OOprod_{1\leq i\neq j\leq n}\tau(l_{i,j})^{100 A}
,\]
where the notation $\xi',\Oprod$ means that some of the variables $l_{i,j}$ have been eliminated,
the notation $\xi^{''},\OOprod$ that one further variable has been eliminated
and the constant $A'$ depends at most on $A$ and $\b{f}$.
At the last step of the induction we will arrive at the expression
\[
\sum_{l_{1,2}>\Delta^{B_1}}\frac{\mu(l_{1,2})^2}{l_{1,2}^2}\tau(l_{1,2})^C
,\] where $C=C(\b{f})$.
Obviously this is 
$\ll
\Delta^{-\frac{B_1}{2}}
$. 
The arguments above show that
\begin{equation}
\label{eq:2btrunctrunc}
S((u_{i,j}))=
\sum_{\substack{(l_{i,j}) \in \N^{{n \choose 2}}\\l_{i,j}\leq \Del^{B_1}\\ u_{i,j}l_{i,j}|P(z_0,z_1)}}
\hspace{-0,5cm}\mu(\b{l})
\prod_{i=1}^n
\l(\sum_{\substack{k\in \N\\k|P(z_0,z_1)\\ \xi_i|k}}
\frac{\lambda^+_k\varpi_i(k)}{k}
\r)+O\left((\log z_1)^n \Del^{-\frac{B_1}{2}}\right),\end{equation}
where the implied constant is independent of the $u_{i,j}$.

We now aim to use a consequence of the linear
case
of the
Rosser--Iwaniec sieve
(in fact the linear case was 
settled first by Jurkat and Richert~\cite{MR0202680})
that is given in~\cite[Lem.11]{BF}. 
We shall find it convenient to use the error term appearing in~\cite[Th.1]{MR581917},
this will lead to replace the term 
$
e^{\sqrt{L-s}}(\log D)^{-1/3}
$
in~\cite[Lem.10]{BF}
and~\cite[Lem.11]{BF}
by
\[
e^{\sqrt{L}}
Q(s)(\log D)^{-1/3}
\]
where, as stated in~\cite[Eq.(1.6)]{MR581917}, the function $Q(s)$ satisfies
\[Q(s)<\exp\{-s \log s+s \log \log 3s+O(s)\}
, s\geq 3
.\] 
The constant $L$ in our case will depend at most on the coefficients of $\b{f}$,
which is considered constant throughout our paper-thus we can assume that 
the terms above are 
$\ll_\bff
s^{-s}
(\log D)^{-1/3}
$, with an implied constant depending at most on $\bff$.
Let us choose the set of primes 
$$\calP:=\{p \mbox{ prime }: p>z_0\}.$$
Moreover, we observe that $\varpi_i(k)$ is a multiplicative function for all $1\leq i\leq n$. 
We define the modified multiplicative function $\widetilde{\varpi_i}(k)$ by
\begin{equation*}
\widetilde{\varpi}_i(p):=\left\{\begin{array}{ccc} 
\varpi_i(p)&\mbox{if} & p>z_0\\ 
0 &\mbox{if} & p\leq z_0.\end{array}\right.
\end{equation*}
So far we can only assume that $\varpi_i(p)\leq 1+p^{-\eps},$ whereas in \cite{BF} they work with the stronger statement that 
$\varpi(p)\leq 1+1/(p-1).$ However, we still get the bound present in \cite[Eq.(3.10)]{BF} for a uniform $L$. 
For this we observe that
\begin{equation*}
\begin{split}
\log \prod_{w_1<p\leq w_2}\left(1-\frac{\widetilde{\varpi}_i(p)}{p}\right)^{-1} &\leq \sum_{w_1<p\leq w_2} \log \left(1-\frac{1}{p}-\frac{C}{p^{1+\eps}}\right)^{-1} \\&\leq \sum_{w_1<p\leq w_2}\left(\frac{1}{p}+\frac{C}{p^{1+\eps}} \right)+O( w_1^{-1}) \\
& \leq \log\log w_2-\log\log w_1 +O\left(\frac{1}{\log w_1}\right).
\end{split}
\end{equation*}
by Mertens' theorem. This leads to the bound 
$$\prod_{w_1<p\leq w_2}\left(1-\frac{\widetilde{\varpi}_i(p)}{p}\right)^{-1}\leq \left(\frac{\log w_2}{\log w_1}\right)\left(1+\frac{L}{\log w_1}\right),
$$ for a uniform constant $L=L(\bff)$.  
We can now directly apply~\cite[Lem.11]{BF} to the inner sums appearing in~\eqref{eq:2btrunctrunc}. 
Introduce the constant $s_0$ through $s_0:=(\log D_1)/(\log z_1)$,
which we demand that it fullfills $s_0\geq 3$,
and set 
$$U_i(z_1,\xi_i):=\mu(\xi_i)
\prod_{\substack{p|\xi_i\\p>z_0}}\frac{\varpi_i(p)}{p-\varpi_i(p)}
\prod_{\substack{z_0<p\leq z_1 
}}\left(1-\frac{\varpi(p)}{p}\right).$$
This provides us with 
\[
\sum_{\substack{k\in \N\\k|P(z_0,z_1)\\ \xi_i|k}}
\frac{\lambda^+_k\varpi_i(k)}{k} = U_i(z_1,\xi_i)+O\left(\tau(\xi_i)
s_0^{-s_0}
\right).
\]
Owing to the apparent
bounds
$0\leq \varpi_i(p)< p/2$, valid for $p>z_0$ (as long as $z_0$ is enlarged) 
we deduce that $|U_i(z_1,\xi_i)|\leq 1$ for all $1\leq i\leq n$ and divisors $\xi_i|P(z_0,z_1)$.
We use this approximation in~\eqref{eq:2btrunctrunc}, to obtain
$$S((u_{i,j}))- 
\hspace{-0,5cm}
\sum_{\substack{
l_{i,j}\leq \Del^{B_1}\\ u_{i,j}l_{i,j}|P(z_0,z_1)}}
\hspace{-0,5cm}\mu(\b{l})\prod_{i=1}^n U_i(z_1,\xi_i) 
\ll
(\log z_1)^n \Del^{-\frac{B_1}{2}}+\Del^{B_1\binom{n}{2}+1/100}(s_0^{-s_0}+
s_0^{-s_0}
(\log D_1)^{-\frac{1}{3}}) 
.$$
Assume that the assumptions in Lemma \ref{lem3} are satisfied. Together with equation (\ref{eqnSigstar}) we now obtain
$$\Sig(D_1,z_1)= \Sig^{MT}(D_1,z_1)+ \Sig^{ET}(D_1,z_1),$$
with a main term given by
$$\Sig^{MT}(D_1,z_1)=\sum_{\substack{u_{i,j}\leq \Delta
\\1\leq i<j \leq n}}
\widehat{g}((u_{i,j}))\sum_{\substack{(l_{i,j}) \in \N^{{n \choose 2}}\\l_{i,j}\leq \Del^{B_1}\\ u_{i,j}l_{i,j}|P(z_0,z_1)}}
\hspace{-0,5cm}\mu(\b{l})\prod_{i=1}^n U_i(z_1,\xi_i),$$
and an error term satisfying
$$\Sig^{ET}(D_1,z_1)
\ll
\frac{(\log z_1)^n}{
\Del^{1-\eps}  
}
+ 
\frac{
\Del^{
C+
\binom{n}{2}
-\frac{B_1}{2}}
}
{(\log z_1)^{-n}}+
\Del^{C+(B_1+1)\binom{n}{2}+1/100}
s_0^{-s_0} 
,$$
where $C=C(\bff)>0$ is such that 
\[ 
|\widehat{g}((u_{i,j}))| \ll \max\{u_{i,j}\}^C
.\]
We will assume that such a $C$ exists for the moment, 
this will be proved later in Lemma~\ref{lemg}.
Therefore we may 
choose $B_1>0$ large enough 
so that 
$C+
\binom{n}{2}
-\frac{B_1}{2}<-1$.
We can then obtain
\begin{equation}\label{eqn21}
\Sig(D_1,z_1)-\Sig^{MT}(D_1,z_1)
\ll 
\frac{(\log z_1)^n}{\Del^{1-\epsilon}}
+\Del^c s_0^{-s_0}
,\end{equation}
where $c=c(\bff)>0$. Note that here we implicitly assume that $s_0\geq 3$, thus
 $\frac{\log D_1}{\log z_1}\geq 3$. 
\begin{lemma}
\label{lemg}
Assume that
$
\mathfrak{B}(\bff)> 
\max\{
R^2 2^{d-1}(d^2-1),
R^2
2^{d-1}
(d-1)
+(R+1)
\}
$.
Let $\bfu\in \N^{\binom{n}{2}}$ be
such that $\mu^2(\bfu)=1$ and such that $p|\widetilde{\bfu}$ implies that $p>z_0$. Then, for $z_0$ sufficiently large one has
$$\widehat{g}((u_{i,j}))\ll \left(\prod_{i\neq j}u_{i,j}\right)^{\frac{d\mathfrak{B}(\bff)}{(d-1)2^{d-1}}(d-\frac{1}{R})+R+\eps}.$$
\end{lemma}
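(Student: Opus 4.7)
The strategy is to factorise $\widehat{g}$ across primes and bound each local factor using the results of \S\ref{s:localden}. First, since $\varpi$ is multiplicative on square-free vectors and the relation $\varpi(\bfd)=\widehat{g}((d_{i,j}))\prod_i\varpi_i(d_i)$ holds for every square-free $\bfd$ realising a given gcd-matrix, one obtains a decomposition $\widehat{g}((u_{i,j}))=\prod_p \widehat{g}_p(I_p)$, where $I_p:=\{1\le i\le n:p\mid d_i\}$ and
\[
\widehat{g}_p(I_p):=\frac{\varpi(p^{\mathbf{1}_{I_p}})}{\prod_{i\in I_p}\varpi_i(p)}.
\]
The cheap but crucial first reduction is that $\widehat{g}_p(I_p)=1$ whenever $|I_p|\le 1$: for $I_p=\emptyset$ both numerator and denominator equal $1$, while for $I_p=\{i_0\}$ both equal $\varpi_{i_0}(p)$. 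Hence only primes with $|I_p|\ge 2$ can contribute.

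Next I would apply the bounds of \S\ref{s:localden} to each surviving local factor. Corollary~\ref{corome} yields $\varpi(p^{\mathbf{1}_{I_p}})\ll p^{\Upsilon}$ uniformly in $p>z_0$ and in $I_p\subset\{1,\dots,n\}$. For the denominator, the Birch-rank hypothesis of the present lemma matches exactly the hypothesis of Corollary~\ref{cor2} applied to $\bfj=\bfe_i$ (the case $|\bfj|_1=1$), which forces $\varpi_i(p)=1+O(p^{-\lambda})$ for some $\lambda>0$. Enlarging $z_0$ if necessary we may therefore assume $\varpi_i(p)\ge 1/2$ for all $p>z_0$ and all $i$. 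Combining these two ingredients gives, for every $p>z_0$ with $|I_p|\ge 2$,
\[
\widehat{g}_p(I_p)\ll 2^{|I_p|}p^{\Upsilon}\ll_n p^{\Upsilon}.
\]

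Finally I would compare with the target bound. For each prime $p$ with $|I_p|\ge 2$, the prime $p$ divides $u_{i,j}$ for every ordered pair $i\neq j$ inside $I_p$, so the exponent of $p$ in $\prod_{i\neq j}u_{i,j}$ equals $|I_p|(|I_p|-1)\ge 2$. Consequently the $p$-part of $\bigl(\prod_{i\neq j}u_{i,j}\bigr)^{\Upsilon+\eps}$ is at least $p^{2(\Upsilon+\eps)}$, which beats $p^{\Upsilon}$ with a surplus of $p^{\Upsilon+2\eps}$; this surplus absorbs the harmless constant $2^{|I_p|}\le 2^n$ for all sufficiently large $p$, while the finitely many bounded primes contributing a $|I_p|\ge 2$ term only add an $O_{\bff,n}(1)$ factor into the implied constant. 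Taking the product over all primes gives the claimed estimate.

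The only delicate point is securing the lower bound $\varpi_i(p)\ge 1/2$, for which the Birch-rank hypothesis of the lemma has been tailored so as to invoke Corollary~\ref{cor2}; apart from this, the argument is a routine packaging of the local estimates from \S\ref{s:localden} via the prime-by-prime factorisation of $\widehat{g}$.
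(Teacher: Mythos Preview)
Your proof is correct and follows essentially the same route as the paper's own argument: both factorise $\widehat{g}$ over primes (the paper does this implicitly via multiplicativity, you do it explicitly), reduce to primes $p$ with $|I_p|\ge 2$, bound the numerator by Corollary~\ref{corome} and the denominator from below via Corollary~\ref{cor2}, and then compare with $\prod_{i\neq j}u_{i,j}$. The paper phrases the denominator bound as $\prod_i\varpi_i(u_i)^{-1}\ll(\widetilde{\bfu})^\mu$ for any $\mu>0$ rather than your cruder $\varpi_i(p)\ge 1/2$, and concludes by noting $\prod_{p\mid\widetilde{\bfu}}p\le\prod_{i\neq j}u_{i,j}$ rather than your sharper $|I_p|(|I_p|-1)\ge 2$ exponent count, but these are cosmetic differences.
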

\begin{proof}
First we recall that
\begin{equation}\label{eqng}
\widehat{g}((u_{i,j}))\prod_{i=1}^n \varpi_i(u_i)= \varpi(\bfu),
\end{equation}
where we have $u_{i,j}=\gcd(u_i,u_j)$. For bounding $\widehat{g}((u_{i,j}))$ we may make the following assumption: if $p$ is a prime with $p|u_i$ for some $1\leq i\leq n$, then there is a $1\leq j\leq n$, $j\neq i$ such that $p|u_{i,j}$. Otherwise we could replace in \eqref{eqng} the vector $\bfu$ with a vector $\widetilde{\bfu}$ where $\widetilde{u_k}=u_k$ for $k\neq i$ and $\widetilde{u_k}=\frac{u_k}{p}$ for $k=i$. In particular, we may assume that
$$u_i\leq \prod_{j\neq i} u_{ij},$$
for every $1\leq i\leq n$.\par
Next we observe that
$$\prod_{i=1}^n \varpi_i(u_i) = \prod_{i=1}^n \prod_{p|u_i} \varpi_i(p).$$
We recall the identity
$\varpi_i(p)=p\sig(p^{\bfe_i}|\bfx)\sig_p^{-1}$.
By Corollary \ref{cor2} we have
$$\sig(p^{\bfe_i}|\bfx)=\frac{1}{p}+O(p^{-1-\eps}).$$
Therefore we obtain
$$\prod_{i=1}^n \varpi_i(u_i)=\prod_{i=1}^n \prod_{p|u_i}(1+O(p^{-1-\eps}))^{-1}(1+O(p^{-\eps})),$$
and
$\prod_{i=1}^n \varpi_i(u_i)^{-1}\ll_\mu (u_1\cdots u_n)^{\mu},$
for any $\mu>0$. 
By Corollary \ref{corome} we have
$$\varpi(p^{\bfj})\ll p^{\frac{d\mathfrak{B}(\bff)}{(d-1)2^{d-1}}
(d-\frac{1}{R})
+R }.$$
Injecting
these bounds into~\eqref{eqng} yields
\[
\widehat{g}((u_{i,j}))\ll \Big(
\prod_{p|\widetilde{\bfu}}p\Big)^{\frac{d\mathfrak{B}(\bff)}{(d-1)2^{d-1}}(d-\frac{1}{R})+R+\eps}
\ll \Big(\prod_{i\neq j}u_{i,j}\Big)^{\frac{d\mathfrak{B}(\bff)}{(d-1)2^{d-1}}(d-\frac{1}{R})+R+\eps},
\]
thus concluding the proof.
\end{proof}

As in~\cite[p.90]{BF},
we now observe that $\Sig^{MT}(D_1,z_1)$ is independent of $D_1$. We set 
$$D_2:= \max (D_1,3^{z_1})$$
and with equation (\ref{eqn21}) applied to $D_2$ instead of $D_1$, we obtain that
\begin{equation*}
\Sig(D_1,z_1)-\Sig(D_2,z_1)
\ll 
\frac{(\log z_1)^n}{\Del^{1-\epsilon}}
+\Del^c s_0^{-s_0}
.\end{equation*}
For this choice of $D_2$ we have $\lam^+_d=\mu(d)$ for $d|P(z_0,z_1)$ (note that with the change of $D_1$ to $D_2$ also the sieve weights $\lam$ change). Hence we can compute $\Sig(D_2,z_1)$ as
$$\Sig(D_2,z_1)=\sum_{\substack{\b{d} \in \N^n
\\ d_i |P(z_0,z_1)}}
\frac{\varpi(\bfd)}{\widetilde{\bfd}}\prod_{i=1}^n\mu(d_i)=\prod_{z_0<p\leq z_1}\left(1-\frac{g(p)}{p}\right),$$
with $g(p)$ defined as in \eqref{def:ome}.
Injecting our estimates for $\Sigma(D_1,z_1)$ into~\eqref{eq:bigsigma}
yields the upper bound in the next result.
\begin{proposition}
\label{prop:presiev}
Assuming
$
l_i|P(z_1,z),
|\b{l}|D_1\log B
\leq B^{1/\rho}
$
and that $\mathfrak{B}(\b{f})$ exceeds
\[ 
\max\big\{
2^{d-1}(d-1)
R(R+1),
2^{d-1}
(d-1)R^2
+(R+1)
(\Upsilon+1),
2^{d-1}
(d^2-1)R^2
\big\}
\]
we have
\begin{equation*}
\begin{split} 
G(B,z_1;\b{l})
&
=
X_\bfl \prod_{z_0<p\leq z_1}\left(1-\frac{g(p)}{p}\right)
 +O\left( 
\varpi(\bfl)
\frac{B^{n-Rd}}{{\widetilde{\b{l}}}} 
\left(\frac{(\log z_1)^n}{\Del^{1-\epsilon}}
+\Del^c s_0^{-s_0}\right)\right)\\& +
O\Bigg( 
\frac{B^{n+\epsilon}}{
\widetilde{\b{l}}
}
\sum_{\substack{|\bfk|
\leq D_1\\
p|\widetilde{\bfk} \Rightarrow z_0<p \leq z_1}}
\mu(\b{k})^2
\frac{E(B;
(k_1l_1,\ldots,k_nl_n)
)}
{\widetilde{\b{k}}} 
\Bigg).
\end{split}
\end{equation*}
\end{proposition}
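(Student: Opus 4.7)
The plan is to obtain matching upper and lower bounds for $G(B,z_1;\bfl)$ via the Rosser--Iwaniec machinery and then to feed in the preceding analysis of $\Sigma(D_1,z_1)$. The upper bound has essentially been set up already: the inequality $(1\ast\lambda^+)(m)\geq (1\ast\mu)(m)$, applied to $m=\gcd(x_1\cdots x_n,P(z_0,z_1))$, converts the congruence-free condition into the sum over $k|P(z_0,z_1)$ leading to~\eqref{eq:bigsigma}. The level of distribution from Lemma~\ref{lem:levofdi} is applicable because the hypothesis $|\bfl|D_1\log B\leq B^{1/\rho}$ guarantees $|\langle\bfk\bfl\rangle|\leq B^{1/\rho}/\log B$, and the subsequent reductions identified $\Sigma(D_1,z_1)$ with $\prod_{z_0<p\leq z_1}(1-g(p)/p)$ up to an error $(\log z_1)^n\Delta^{-1+\eps}+\Delta^c s_0^{-s_0}$; multiplying by $X_\bfl=\mathfrak S(\bff)\c J_w(\bff,W)\widetilde{\bfl}^{-1}\varpi(\bfl)B^{n-Rd}$ produces the first error term in the statement, while the second is inherited directly from Lemma~\ref{lem:levofdi}.

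For the matching lower bound, I would replace $\lambda^+$ throughout the argument by a linear Rosser--Iwaniec \emph{lower} bound sieve $\lambda^-$ of the same level $D_1$, yielding
\[
G(B,z_1;\bfl)\geq \sum_{\substack{\bfk\in\N^n\\k_i|P(z_0,z_1)}}
\Big(\prod_{i=1}^{n}\lambda^-_{k_i}\Big)
\sum_{\substack{\bfx\in\c{A}\\l_ik_i|x_i}}\prod_{i=1}^{n}w\Big(\tfrac{x_i}{B}-\tfrac{\zeta_i}{2|\boldsymbol{\zeta}|}\Big).
\]
The inner sum is again handled by Lemma~\ref{lem:levofdi}, and the whole chain of reductions -- the truncation via Lemma~\ref{lem3} (which is insensitive to the sign of the sieve weights), the factorisation $\varpi(\bfd)=\widehat g((d_{i,j}))\prod_i\varpi_i(d_i)$, the Möbius detection of the pairwise gcds, the truncation at $l_{i,j}\leq\Delta^{B_1}$ controlled by Lemma~\ref{lemg}, and the coordinatewise application of the Jurkat--Richert linear sieve -- goes through verbatim. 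Since the linear sieve is applied \emph{in each coordinate separately}, the one-dimensional main terms $U_i(z_1,\xi_i)$ are identical regardless of whether $\lambda^+$ or $\lambda^-$ seeded the argument, so the analysis delivers the \emph{same} $\Sigma^{MT}(D_1,z_1)$ on both sides.

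The closing observation is the $D_1$-independence of $\Sigma^{MT}$ already exploited in the paragraph preceding the proposition: enlarging the support to $D_2:=\max(D_1,3^{z_1})$ makes $\lambda^{\pm}_d=\mu(d)$ for every $d|P(z_0,z_1)$, whereupon both $\Sigma^{\pm}(D_2,z_1)$ collapse to the Euler product $\prod_{z_0<p\leq z_1}(1-g(p)/p)$ exactly. Combining with~\eqref{eqn21} (and its analogue for $\lambda^-$) pins down $\Sigma^{\pm}(D_1,z_1)$ up to the error recorded in the proposition, and assembly yields both the upper and lower bounds with the same main term.

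\textbf{Main obstacle.} The delicate step -- already negotiated in~\S\ref{s:bvarpi} -- is that $\varpi$ does not admit a coordinatewise majorant of Br\"udern--Fouvry type; one is forced into the factorisation through $\widehat g$, and Lemma~\ref{lemg} yields only polynomial control $\widehat g((u_{i,j}))\ll(\prod u_{i,j})^{O(1)}$. All the truncations must therefore be calibrated against $\binom n2$: the cutoff $\Delta$ balances $(\log z_1)^n\Delta^{-1+\eps}$ against $\Delta^c s_0^{-s_0}$, and $B_1$ must be large enough that $C+\binom n2-B_1/2<-1$ (with $C$ the exponent in Lemma~\ref{lemg}). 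Once these parameters have been chosen as in the preceding paragraphs, the proof of the proposition is reduced to the assembly described above.
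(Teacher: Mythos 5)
Your upper-bound argument coincides with the paper's: the coordinatewise $\lambda^+$ expansion leading to~\eqref{eq:bigsigma}, the applicability of Lemma~\ref{lem:levofdi} under $|\bfl|D_1\log B\leq B^{1/\rho}$, the truncations via Lemma~\ref{lem3} and Lemma~\ref{lemg}, and the identification of $\Sigma^{MT}(D_1,z_1)$ with the Euler product through the choice $D_2=\max(D_1,3^{z_1})$ are all exactly as in the text. The gap is in the lower bound. You propose to replace $\lambda^+$ by a lower-bound sieve $\lambda^-$ in \emph{every} coordinate, which amounts to the pointwise inequality $\prod_{i=1}^n(1\ast\mu)(m_i)\geq\prod_{i=1}^n(1\ast\lambda^-)(m_i)$. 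This is false: lower-bound sieve weights satisfy $(1\ast\lambda^-)(m)\leq(1\ast\mu)(m)$ but genuinely take negative values (for a Brun- or Rosser--Iwaniec-type $\lambda^-$ one has $(1\ast\lambda^-)(m)<0$ for $m$ with sufficiently many prime factors), so whenever an even number $\geq 2$ of the factors $(1\ast\lambda^-)(m_i)$ are negative the product is positive while $\prod_i(1\ast\mu)(m_i)=0$. Your proposed first display for the lower bound is therefore unjustified, and no choice of $D_1$ or $s_0$ repairs it, since the failure is in the starting combinatorial inequality rather than in the sieve asymptotics. This failure is precisely the reason the vector sieve exists, and is why the paper proves Lemma~\ref{lem:multi}: the correct lower bound is
\[
\prod_{i=1}^n(1\ast\mu)(m_i)\geq\sum_{i=1}^n(1\ast\lambda_i^-)(m_i)\prod_{\substack{1\leq j\leq n\\ j\neq i}}(1\ast\lambda_j^+)(m_j)-(n-1)\prod_{i=1}^n(1\ast\lambda_i^+)(m_i),
\]
which gives $G(B,z_1;\bfl)\geq\sum_{i=0}^n c_iM_i$ with $c_0=-(n-1)$ and $c_i=1$ for $i\geq1$, each $M_i$ carrying a single $\lambda^-$ and $n-1$ copies of $\lambda^+$.

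Once this is in place, your remaining observations do become the paper's argument: each $M_i$ is analysed exactly like $M_0$, the Jurkat--Richert main term $U_i(z_1,\xi_i)$ is the same whether the $i$-th coordinate carries $\lambda^+$ or $\lambda^-$, and since $\sum_{i=0}^nc_i=1$ the $n+1$ Euler-product main terms recombine into the single main term $X_\bfl\prod_{z_0<p\leq z_1}(1-g(p)/p)$ of the proposition. So the repair is local -- substitute the vector-sieve inequality of Lemma~\ref{lem:multi} for the all-$\lambda^-$ product -- but as written the lower half of your proof does not stand.
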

The lower bound can be procured
upon writing
\[
G(B,z_1;\b{l})
=
\sum_{\substack{
\b{x} \in \c{A}\\ \bfl|\bfx}}
\left(
\prod_{i=1}^n
w\bigg( \frac{x_i}{B} -\frac{\zeta_i}{2|\boldsymbol{\zeta}|} \bigg) \right)
\left(
\prod_{i=1}^n
(1\ast \mu)
(\gcd(P(z_0,z_1),x_i))
\right)
\]
and using Lemma~\ref{lem:multi} to obtain
\[
G(B,z_1;\b{l})
\geq 
\sum_{i=0}^n 
c_i M_i 
,\]
where for 
$1\leq i \leq n$
we define  $c_i:=1$ and 
\[ 
M_i:=
\sum_{\substack{ 
k_i |P(z_0,z_1)}}
\Bigg(
\lambda_{k_i}^-
\prod_{j\neq i}\lambda_{k_j}^+
\Bigg)
N_w(B;(k_1l_1,\ldots,k_nl_n)) 
,\] in addition to 
$c_0:=-(n-1)$ 
and 
\[ 
M_0:=
\sum_{\substack{ 
k_i |P(z_0,z_1)}}
\Bigg(\prod_{i=1}^n\lambda_{k_i}^+\Bigg)
N_w(B;(k_1l_1,\ldots,k_nl_n))
.\]  
The treatment of each individual $M_i, (i\neq 0)$, is identical to the treatment of $M_0$ earlier
in this section.
The only difference arises at the last step (the calculation of the Euler products in the main term).
Here the coefficients $c_i$ satisfy  $\sum_{0\leq i \leq n}c_i=1$,
thus completing the proof of Proposition~\ref{prop:presiev}.
\section{Proof of Theorems~\ref{thm:mainvector} and~\ref{thm:levelsaturation}
}
\label{s:mainthm}
Recall the
definition of the set $\c{A}$ in~\eqref{def:cala}.
Our aim is to find a large function
$z=z(B)\leq B$
such that  
\[
S(B,z)
:=
\#\big\{
\b{x}\in \c{A}: 
|\bfx|\leq B,
p|x_1\cdots x_n \Rightarrow p>z
\big\}
\gg
\frac{B^{n-Rd}}{(\log B)^n}
.\]
By~\eqref{eq:bwvfminor} we have 
%We fix a weight function $w$ supported on $[0,1]$  such that  $B^{n-Rd} \ll \mathcal{J}_w(\bff,W)\ll B^{n-Rd}$, such a weight can be chosen due to~\eqref{cond:units}.  Then the identity  $w_0 \mathbf{1}_{|t|\leq B}(t) \geq  w\l(\frac{t}{B/2}\r)  $ shows that 
\begin{equation} 
\label{eq:bwv1043largo}
S(B,z) 
\geq w_0^{-n}S_{\boldsymbol{\zeta}}(B,z)
,\end{equation} 
where 
\[
S_{\boldsymbol{\zeta}}(B,z)
:=
\sum_{\substack{
\b{x}\in \c{A}
\\
p|x_1\cdots x_n \Rightarrow p>z
}}
\prod_{i=1}^n
%w\l(\frac{x_i}{B/2}\r)
w\bigg( \frac{x_i}{B} -\frac{\zeta_i}{2|\boldsymbol{\zeta}|} \bigg)
.\]
One may now
write the sum over $\bfx$ as 
\[
\sum_{\substack{
\b{x}\in \c{A}
}}
\left(
\prod_{i=1}^n
%w\l(\frac{x_i}{B/2}\r)
w\bigg( \frac{x_i}{B} -\frac{\zeta_i}{2|\boldsymbol{\zeta}|} \bigg)
\right)
\left(
\prod_{i=1}^n
(1\ast \mu)
(\gcd(P(z),x_i))
\right)
.\]
For a parameter $D$ let $\lambda^{\pm}$ be a
sieve sequence
supported in $[1,D]$. 
Letting 
\[
\beta(\b{l})
:=
\sum_{i=1}^n
\lambda_{l_i}^-
\Big(
\prod_{\substack{1\leq j \leq n\\j \neq i }}
  \lambda_{l_j}^+
\Big)
-(n-1)
\prod_{i=1}^n
\lambda_{l_i}^+,
\
\b{l} \in \N^n,
\]
alluding to Lemma~\ref{lem:multi} and recalling~\eqref{def:www}
allows us to infer that 
for any $z>z_1$
we have 
\[
S_{\boldsymbol{\zeta}}(B,z)
\geq %w_0^{-n}
\sum_{\substack{\b{l} \in \N^n\\ l_i \mid P(z_1,z)}}
\!\!
\beta(\b{l})
G(B,z_1;\b{l})
.\]
Define the entities
\[\Sigma(D,z_1,z):=\sum_{\substack{\b{l} \mid P(z_1,z)}}\beta(\b{l})\frac{\varpi(\bfl)}{\widetilde{\bfl}},\]
\[
B_1:= 
\sum_{\substack{\bfl \mid P(z_1,z)}}\frac{\varpi(\bfl) }{\widetilde{\bfl}}
\ \ \text{and} \ \
B_2:= B^{dR+\epsilon}
\sum_{\substack{|\bfl|
\leq D\\
\bfl| P(z_1,z)
}}
\frac{1}{\widetilde{\b{l}}}
\sum_{\substack{|\bfk|\leq D_1\\\bfk| P(z_0,z_1)}}  
\frac{E(B;
(k_1l_1,\ldots,k_nl_n)
)}
{\widetilde{\b{k}}} 
.\]
Proposition~\ref{prop:presiev}
now leads to
\begin{equation}
\label{eq:ineq}
\frac{S_{\boldsymbol{\zeta}}(B,z)
%w_0^n
}{\mathfrak{S}(\bff) \mathcal{J}_w(\bff,W) B^{n-Rd}}
\geq 
\Sigma(D,z_1,z) \hspace{-0,2cm}
\prod_{z_0<p\leq z_1}
\hspace{-0,2cm}
\left(1-\frac{g(p)}{p}\right)
\!+
O\!\l(\!\left(\!\frac{(\log z_1)^n}{\Del^{1-\epsilon}}+\frac{\Del^c}{s_0^{s_0}}\!\right)
\!
B_1\!+\!B_2\!\r)\!
.\end{equation}
Letting $m_i:=k_il_i$ and taking advantage of the coprimality of $k_i,l_i$ 
shows that 
\[B_2
\leq 
B^{dR+\epsilon}
\sum_{\substack{|\b{m}|\leq DD_1\\
\b{m}| P(z_0,z)}}  
\frac{E(B;\b{m})}
{\widetilde{\b{m}}} 
.\]
Recalling the definition of the matrix
$\boldsymbol{\epsilon}$ given in~\eqref{def:epss},
shows that,
under the condition
\[
D D_1\leq \frac{B^{1/\rho}}{\log B}
,\]
the sum over $\b{m}$ is
\[
\ll
\sum_{i=1}^3
B^{-\epsilon_{i,1}}
\sum_{1\leq m_1\leq D D_1}
m_1^{\epsilon_{i,2}-1}
\sum_{1\leq m_2\leq m_1}
m_2^{-1}
\ldots
\sum_{1\leq m_{n-1}\leq m_{n-2}}
m_{n-1}^{-1}
\sum_{1\leq m_n\leq m_{n-1}}
m_n^{\epsilon_{i,3}-1}
.\]
Since each $\epsilon_{i,j}$ is non-negative 
we can use the estimate 
$\sum_{1\leq m \leq z}m^{\lambda-1}\ll_\lambda z^{\lambda} \log z$,
valid for each fixed $\lambda \geq 0$ 
to deduce that for every $\epsilon>0$ one has 
\[
B_2
\ll 
B^{dR+\epsilon}
\sum_{i=1}^3
B^{-\epsilon_{i,1}}
(D D_1)^{\epsilon_{i,2}+\epsilon_{i,3}}
.
\]
Our remaining task will be to give a lower bound for $\Sigma$ and an upper bound for $B_1$.
We begin by studying the contribution to $\Sigma(D,z_1,z)$
of vectors $\bfl$ with $\delta:=\gcd(l_{i_1},l_{i_2})\neq 1$; this task is similar to the one in Lemma~\ref{lem3}
and we adapt its assumptions in what follows.
Each such $\delta$ is a product of primes $p>z_1$,
therefore this contribution is 
\[
\ll
\sum_{\delta>z_1} \mu(\delta)^2
\sum_{\substack{\bfl |P(z_1,z)\\ \del | l_{i_1}, \del |l_{i_2}}} 
\frac{\varpi(\bfl)}{\widetilde{\bfl}}
.\]
As in the proof of Lemma~\ref{lem3} we find that this is 
\[
\ll \l(\frac{\log z}{\log z_1}\r)^n\sum_{\delta>z_1}\delta^{-2+\eps} 
\ll  z_1^{-1+\eps} (\log z)^n
.\]
Note that if $l_i,l_j$ are coprime for all $i\neq j$ guarantees that 
$\varpi(\bfl)=\prod_{i=1}^n \varpi_i(l_i)$.
This gives
\[\Sigma(D,z_1,z)=
\sum_{\substack{\b{l} \mid P(z_1,z)\\ i\neq j \Rightarrow \gcd(l_i,l_j)=1}}
\frac{\beta(\b{l})}{\widetilde{\bfl}}\prod_{i=1}^n \varpi_i(l_i)
+O(z_1^{-1+\eps} (\log z)^n).\]
The same argument can also be used to establish  
\[\sum_{\substack{\b{l} \mid P(z_1,z)}}
\frac{\beta(\b{l})}{\widetilde{\bfl}}\prod_{i=1}^n \varpi_i(l_i)
=
\sum_{\substack{\b{l} \mid P(z_1,z)\\ i\neq j \Rightarrow \gcd(l_i,l_j)=1}}
\hspace{-0,5cm}
\frac{\beta(\b{l})}{\widetilde{\bfl}}\prod_{i=1}^n \varpi_i(l_i)
+O(z_1^{-1+\eps} (\log z)^n).\]
Letting
\[\Psi^\pm_i:=\sum_{l|P(z_1,z)}\lambda^\pm_l
\frac{\varpi_i(l)}{l}\]
shows that the sum on the left equals
\[
\Psi:=
\sum_{i=1}^n
\Bigg(\Psi_i^- \prod_{\substack{1\leq j \leq n\\j \neq i }}\Psi_j^+\Bigg)
-(n-1)
\prod_{i=1}^n
\Psi_i^+
,\]
thus providing
\[
\Sigma(D,z_1,z)=\Psi
+O(z_1^{-1+\eps} (\log z)^n).\]

Under the assumptions of Lemma~\ref{lem3} we can similarly show that the contribution of 
$\bfl$ with $\gcd(l_{i_1},l_{i_2})\neq 1$ to $B_1$ is 
\[
\ll 
\sum_{\delta>z_1}\mu(\delta)^2
\sum_{\substack{
\bfl \mid P(z_1,z)\\ \delta|l_{i-1}, \delta| l_{i_2}}}
\frac{\varpi(\bfl) }{\widetilde{\bfl}}
\ll 
B^{n-Rd}
(z_1^{-1+\eps} (\log z)^n)
.\]
Therefore 
\[
B_1
\ll 
 z_1^{-1+\eps} (\log z)^n
+\sum_{\substack{\bfl \mid P(z_1,z)\\ i\neq j \Rightarrow \gcd(l_i,l_j)=1}}
\prod_{i=1}^n\frac{\varpi_i(l_i)}{l_i}
\]
and the last sum is 
\[
\leq \prod_{i=1}^n 
\sum_{l|P(z_1,z)}\frac{\varpi_i(l)}{l}
\leq 
\prod_{i=1}^n
\prod_{z_1<p\leq z}
\l(1+\frac{1}{p}+O(p^{-1-\eps})\r)
\ll (\log z)^n,\]
hence
$
B_1 \ll
(\log z)^n
$.
We therefore find via~\eqref{eq:ineq} the following lower bound
\begin{equation*}
\begin{split} 
\frac{
S_{\boldsymbol{\zeta}}(B,z)
%w_0^n
}{\mathfrak{S}(\bff) \mathcal{J}_w(\bff,W) B^{n-Rd}
}
&
\geq
\Psi
\prod_{z_0<p\leq z_1}\left(1-\frac{g(p)}{p}\right)
+O\left(B^{dR+\epsilon}
\sum_{i=1}^3
B^{-\epsilon_{i,1}}(D D_1)^{\epsilon_{i,2}+\epsilon_{i,3}} \right)
\\& +O\left( 
\frac{ (\log z)^n}{z_1^{1-\eps}(\log z_1)^{n}}
+\left(\frac{(\log z_1)^n}{\Del^{1-\epsilon}}+\frac{\Del^c}{s_0^{s_0}}\right)(\log z)^n
\right),
\end{split}
\end{equation*} 
where a use of 
\[\prod_{z_0<p\leq z_1}\left(1-\frac{g(p)}{p}\right)
\ll (\log z_1)^{-n}
\]
has been made; this can be inferred from the estimate 
$g(p)=\frac{n}{p}+O(p^{-1-\eps})$.
Let us now fix any $\theta>0$ which satisfies 
$\theta<
\theta'$,
where $\theta'$ was defined in~\eqref{def:the}.
Then there exists a small positive $\theta_1$ such that if 
$D:=B^\theta$
and
$D_1:=B^{\theta_1}$
then 
\[
B^{dR+\epsilon}
\sum_{i=1}^3
B^{-\epsilon_{i,1}}
(D D_1)^{\epsilon_{i,2}+\epsilon_{i,3}} 
\ll
B^{-\delta}
,\]
for some $\delta>0$ that is independent of $B$.
Choosing $\Delta=z_1=(\log B)^{2n+1}$
shows that 
\[
s_0=\frac{\log D_1}{\log z_1}
=\frac{\theta_1 \log B}{(2n+1) \log \log B}
\to
\infty
,\]
hence one can verify that 
\[
\frac{ (\log z)^n}{z_1^{1-\eps}(\log z_1)^{n}}
+\left(\frac{(\log z_1)^n}{\Del^{1-\epsilon}}+\frac{\Del^c}{s_0^{s_0}}\right)(\log z)^n
\ll
\frac{1}{(\log B)^n \log \log B}
\]
and
\[
\frac{S_{\boldsymbol{\zeta}}(B,z)
%w_0^n
}{\mathfrak{S}(\bff) \mathcal{J}_w(\bff,W) B^{n-Rd}
}\geq
\Psi
\prod_{z_0<p\leq z_1}\l(1-\frac{g(p)}{p}\r)+
O\l((\log B)^{-n}(\log \log B)^{-1}\r)
.\]
The last product is $\gg (\log z_1)^{-n}$,
thus it remains to show that 
$
\Psi
\gg (\log z_1/\log z)^{n}
$.
Let
$s:=\log D/\log z$
and assume that $s>2$.
Using the inequalities stated in~\cite[Lem.10]{BF}
one deduces that
when 
$s=O_n(1)$
with an implied constant depending at most on $n$,
then 
\[
\Psi\geq (\Psi_n(s)+O_n((\log B)^{-1/3}))
\prod_{i=1}^n
\prod_{z_1<p\leq z}
\Big(1-\frac{\varpi_i(p)}{p}\Big)
,\] 
where
$
\Psi_n(s):=
n f(s)-(n-1) F(s)^n
$.
Here $f(s)$ and $F(s)$
denote the well-known 
functions associated to
the linear sieve,
their definition can be found
in~\cite[Eq.(12.1),Eq.(12.2)]{MR2647984}, for example.
Further information on $f$ and $F$ 
is located in~\cite[\S 11,\S 12]{MR2647984}.
In light of the 
last lower bound for $\Psi$,
it is
sufficient to
find the smallest possible 
value for $s$
such that $\Psi_n(s)>0$.
This is equivalent to
\begin{equation}
\label{eq:differential}
\frac{\ F(s)^n}{f(s)}<1+\frac{1}{n-1}
.\end{equation}
It is a standard fact that when $s>2$ then
$0<f(s)\leq 1 \leq F(s)$.
Therefore
if $s$ remains constant and
independent of $n$ then one cannot prove~\eqref{eq:differential} for large $n$,
this forces us to take $s$ as a function of $n$ that tends to infinity.
At this point we have to employ
asymptotic approximations
for 
$f(s)$ and $F(s)$, these can be found
in~\cite[Eq.(11.134)]{MR2647984}.
They are given by
\[F(s),f(s)=1\pm
\exp\Big\{-s \log s-s\log \log s+s+O\Big(\frac{s \log \log s}{\log s}\Big)\Big
\}
\]
and one sees that if  
$
s\geq 
3(\log n)(\log \log n)^{-1}
$
then 
\[
s\log s
\geq
3 \log n+\frac{3 (\log 3) (\log n)}{\log \log n}
-\frac{3 (\log n) (\log \log \log n)}{\log \log n}
.\]
Therefore, for all large enough $n$, say $n\geq n_0$ for some positive 
absolute constant $n_0$, we obtain
\[
\frac{\ F(s)^n}{f(s)}<\Big(
1+\frac{1}{n^{5/2}}
\Big)^{n+1}
\]
and the inequality $1+n^{-5/2}<(1+(n-1)^{-1})^{1/(n+1)}$, valid for all $n\geq 2$,
makes~\eqref{eq:differential} available. 
In the case that $1\leq n<n_0$ one can immediately infer
from the approximations to $F(s)$ and $f(s)$
that if $s\to+\infty$ then~\eqref{eq:differential}
is automatically satisfied.
This gives a constant $\sigma_0$ that depends at most on $n_0$ (and is therefore absolute)
such that~\eqref{eq:differential}
is valid whenever 
 $s\geq \sigma_0$.  
Hence there exists a positive absolute constant $\sigma_0$
such that if 
\[
s\geq \frac{3 \log n}{\log \log n}+\sigma_0
\]
then, alluding to~\eqref{eq:bwv1043largo}, 
Theorem~\ref{thm:mainvector}
holds
with any constant $c_0>3+\sigma$
and with 
$P^-(x_1\cdots x_n)$
exceeding the sieving parameter
$
z=D^{1/s}=B^{\theta/s}
$.

 \begin{proof}[\textbf{Proof of 
Theorem~\ref{thm:levelsaturation}
}\!\!\!\!
] 
The arguments in the present section have so far 
proved that 
\[
S_{\boldsymbol{\zeta}}(B,B^{\theta/s}) \gg B^{n-Rd} (\log B)^{-n} 
.\]
This is sufficient for 
Theorem~\ref{thm:levelsaturation}
because
to show that a subset of $V_{\b{f}}(\Q)$
is Zariski dense in an absolutely irreducible variety $V_\b{f}$,
it is sufficient to choose an arbitrary neighbourhood
in the real analytic topology of a non-singular point $\boldsymbol{\zeta}\in V_\b{f}(\R)$
and show that any real point in the neighbourhood (on the variety) can be approximated by a rational point. 
In our case the 
neighbourhood is given by $B \c{B}_\eta$ (where $\c{B}_\eta$ was defined in~\eqref{def:bachbwv1043allegro}).
\end{proof}

\bibliographystyle{amsalpha} 
\bibliography{sb}
\end{document}